\newtheorem{theorem}{Theorem}[section] 
\newtheorem{corollary}[theorem]{Corollary}
\newtheorem{definition}[theorem]{Definition}
\newtheorem{lemma}[theorem]{Lemma}
\begin{document}
\normalem
\title{\textbf{Neighbor Connectivity of Undirected Toroidal Meshes}
\footnote{This research is supported by Beijing Natural Science Foundation (1252010) and the Fundamental Research Funds for the Central Universities (2024XJJS18).}
}

\author{Hui-Ming Huang$^{\rm a}$, \quad  Ruichao Niu$^{\rm a}$\footnote{Corresponding author. \newline {\em E-mail address:} huanghm@muc.edu.cn (H.-M. Huang), niuruichao@muc.edu.cn (R. Niu), xum@bnu.edu.cn(M. Xu), spade@ntub.edu.tw(J.-M. Chang).}, \quad  Min Xu$^{\rm b}$, \quad Jou-Ming Chang $^{\rm c}$ 
\\
{\footnotesize   \em $^{\rm a}$ College of Science, Minzu University of China, Beijing, 100081, China
\\
\em $^{\rm b}$ School of Mathematical Sciences, Beijing Normal University, Laboratory of Mathematics and Complex Systems, Ministry of Education, Beijing, 100875, China
\\
\em $^{\rm c}$ Institute of Information and Decision Sciences, National Taipei University of Business, Taipei, Taiwan
}
}
\date{}

\maketitle

\renewcommand{\abstractname}{Abstract}
\begin{abstract}
In this paper, we examine the neighbor connectivity, denoted as $\kappa_{NB}$, of the undirected toroidal mesh $C(d_1,d_2,\ldots,d_n)$. We demonstrate that $\kappa_{NB}(C(d_1,d_2,\ldots,d_n)) = n$ for all $n \ge 2$ and $d_i \ge 3$ (for $1 \le i \le n$). Additionally, we perform a computer simulation experiment on neighbor connectivity in undirected toroidal meshes. This experiment not only supports our theoretical findings with empirical results but also provides a deeper understanding of neighbor structure failures in undirected toroidal meshes.
\\
\\
{\bf keywords}: undirected toroidal mesh; neighbor connectivity; fault-tolerance.

\end{abstract}

\section{Introduction}
In a multiprocessor system, processors are usually connected through a specific interconnection network. However, as the system increases in size, it becomes inevitable that some processors will fail. To maintain proper functionality, healthy processors need to remain connected. Therefore, it is essential for the interconnection network of a multiprocessor system to have strong fault tolerance.

In practice, researchers usually employ simple undirected graphs to model these networks. Let $G$ be a simple undirected graph with vertex set $V(G)$ and edge set $E(G)$. If there is an edge $e = (u, v)$ in $E(G)$ connecting two distinct vertices $u$ and $v$ in $V(G)$, we say that $e$ is \textit{incident with} $u$ and $v$, and $u$ and $v$ are \textit{adjacent}. For a vertex $u\in V(G)$, the set of all vertices adjacent to $u$ in $G$ is called the \textit{open neighborhood} of $u$, denoted by $N_G(u)$ (or simply $N(u)$ for short, and subsequent notations are abbreviated similarly). The \textit{closed neighborhood} of $u$ in $G$ is denoted as $N_G[u] = N_G(u) \cup \{u\}$. For a subset $U \subseteq V(G)$, the open neighborhood of $U$ in $G$ is denoted by $N_G(U) = \bigcup\limits_{u \in U}N_G(u) \setminus U$, and the closed neighborhood of $U$ in $G$ is denoted by $N_G[U] = N_G(U) \cup U$. The subgraph of $G$ induced by $U$ is represented as $G[U]$. The induced subgraph $G[V(G) \setminus U]$ is commonly abbreviated as $G-U$.

The classic concept of connectivity in graph theory is an important parameter for measuring the fault tolerance of networks. For a graph $G$, the \textit{connectivity} is denoted by $\kappa(G)$ and is defined as the minimum number of vertices in any subset $U \subseteq V(G)$ such that $G - U$ results in a disconnected graph or a trivial graph (i.e., a graph with only one vertex). When using connectivity to measure the fault tolerance of a network, we typically assume that failures are limited to the faulty processor itself and do not affect neighboring processors.  However, in many cases, failures tend to spread, i.e., the failure of one processor may also lead to the failure of its neighboring processors. This highlights certain limitations of classic connectivity. To address these limitations, we can utilize an extended concept of connectivity known as neighbor connectivity. It takes into account the impact of faulty processors on their neighboring processors, thereby providing a more accurate measure of the fault tolerance of networks.

The concept of neighbor connectivity originated from Gunther and Hartnell's research on spy networks~\cite{gunther1985neighbour,gunther1978minimizing,gunther1980optimal}. For a graph $G$, the \textit{neighbor connectivity}, denoted by $\kappa_{NB}(G)$, is defined as the minimum cardinality of all sets $U$ that meet the following conditions: $U \subseteq V(G)$, and $G \ominus U$ is a disconnected, complete, or empty graph (i.e., a graph with no vertices), where $G \ominus U = G - N[U]$ is called the \textit{survival graph} of $G$ for $U$. When $U = \{u\}$, we simply write $G \ominus U$ as $G \ominus u$.

Researchers have conducted numerous studies on neighbor connectivity. Gunther et al.~\cite{gunther1987neighbor} proved that $\kappa_{NB}(G) \le \kappa(G)$ for any graph $G$. Doty et al.~\cite{doty1996cayley} showed that for a given graph $G$ and an integer $k$, determining whether $\kappa_{NB}(G) \le k$ is an NP-complete problem. They also investigate the algebraic properties of the generating set that characterize Cayley graphs with a neighbor connectivity of $1$. Furthermore, Doty \cite{doty2006new} improved the upper bound of the neighbor connectivity of Abelian Cayley graphs. Table \ref{the neighbor connectivity of some networks} presents known results regarding the neighbor connectivity for some specific networks.

\renewcommand{\arraystretch}{1.2}
\begin{table}[htbp]\small
	\caption{Known results of some networks on neighbor connectivity}
	\label{the neighbor connectivity of some networks}
	\centering
	\begin{tabular}{llc}
		\toprule
		Network $G$ & Neighbor connectivity $\kappa_{NB}(G)$ & References \\
		\midrule
		Product graph $K_m \times K_n$ & $\min \{m-1,n-1\}$, $(m,n \ge 3)$ & \cite{gunther1991m} \\
		Hypercube $Q_n$ & $\lceil \frac{n}{2} \rceil$, $(n \ge 2)$ & \cite{dvovrak2020neighbor} \\
		Alternating group network $AN_n$ & $n-1$, $(n \ge 4)$ & \cite{shang2018neighbor} \\
		Alternating group graph $AG_n$ & $n-2$, $(n \ge 5)$ & \cite{abdallah2021neighbor} \\
		Star graph $S_n$ & $n-1$, $(n \ge 3)$ & \cite{shang2018neighbor} \\
		$k$-ary $n$-cube $Q_n^k$ & $n$, $(n \ge 2, k \ge 3)$ & \cite{dvovrak2020neighbor} \\
		Locally twisted cube networks $LTQ_n$ & $\lceil \frac{n}{2} \rceil$, $(n \ge 2)$ & \cite{kung2022neighborhood} \\
		Pancake network $P_n$ & $n-1$, $(n \ge 3)$ & \cite{gu2023neighbor} \\
		Burnt pancake network $BP_n$ & $n$, $(n \ge 2)$ & \cite{gu2023neighbor} \\
		Hierarchical star network $HS_n$ & $n-1$, $(n \ge 3)$ & \cite{gu2023subversion} \\
		Complete cubic network $CC_n$ & $\lceil \frac{n}{2} \rceil + 1$, $(n \ge 2)$ & \cite{gu2023subversion} \\
		Hierarchical cubic network $HC_n$ & $\lceil \frac{n}{2} \rceil + 1$, $(n \ge 2)$ & \cite{gu2023subversion} \\
		\bottomrule
	\end{tabular}
\end{table}

In this paper, we study the neighbor connectivity of undirected toroidal meshes. The rest of this paper is organized as follows. Section~\ref{preliminaries} presents some fundamental concepts relevant to this work. Section~\ref{Sec:main} establishes the main result of the neighbor connectivity for undirected toroidal meshes. Section~\ref{simulation} takes experiments to simulate the occurrence of failures in undirected toroidal meshes, followed by an analysis of the related phenomena concerning the neighborhood connectivity of these networks based on experimental results and discussions. Section~\ref{concluding remarks} contains our concluding remarks.

\section{Preliminaries} \label{preliminaries}

Let $n$ be a positive integer. A \textit{path} $P$ of length $n-1$ is represented by a vertex sequence $\langle v_1,v_2,\ldots,v_n \rangle$, where $v_1$ and $v_n$ are the two endpoints of $P$. As usual, this path is referred to as a \textit{$(v_1,v_n)$-path} and denoted by $P_{v_1 v_n}$. Notably, when $n=1$, $P$ consists of a single vertex (also known as a \textit{singleton}), indicating that the length of $P$ is 0. For two paths $P_{v_1 v_n}=\langle v_1,v_2,\ldots,v_n \rangle$ and $P_{u_1 u_n}=\langle u_1,u_2,\ldots,u_n \rangle$, we say that they are \textit{vertex-disjoint} (or simply \textit{disjoint}) if the sets of their vertices do not intersect, i.e., $V(P_{v_1 v_n}) \cap V(P_{u_1 u_n}) = \varnothing$. In particular, if $v_n$ and $u_1$ are adjacent, we can concatenate these two paths into a longer one, represented as
\[
\langle v_1,v_2,\ldots,v_n,u_1,u_2,\ldots,u_n \rangle,
\]
which can be denoted by $P_{v_1 u_n}=\langle P_{v_1 v_n},P_{u_1 u_n} \rangle$. Especially, if $P_{v_1 v_n}$ (resp.\ $P_{u_1 u_n}$) is a singleton, the concatenated path is simplified as $\langle v_1, P_{u_1 u_n} \rangle$ (resp.\ $\langle P_{v_1 v_n}, u_1 \rangle$). Similarly, this notion of concatenation can be extended to three or more paths.

Given two sets $X$ and $Y$, a path $P$ is called an \textit{$(X, Y)$-path}  if its two endpoints belong to $X$ and $Y$, respectively. Particularly, if $X=\{x\}$ or $Y=\{y\}$, the path is referred to as an $(x, Y)$-path or $(X, y)$-path, respectively. Additionally, if $X=\{x\}$ and $|Y|=k$, we define an \textit{$(x, Y)$-fan} as a set of $k$ $(x, Y)$-paths that intersect only at the vertex $x$. Similarly, we can define a $(Y, x)$-fan.

The undirected toroidal mesh was proposed by Bhuyan and Agrawal~\cite{bhuyan1984generalized}, with recent related literature available in~\cite{juan2025mesh, niu2021two, pai2014queue}. The following is the formal definition: 

\begin{definition} \label{def:mesh}
{\rm (See~\cite{bhuyan1984generalized}.) For any $n$-tuple array $(d_1,d_2,\ldots,d_n)$, where $d_i \ge 2$ is an integer for $i \in \{1,2,\ldots,n\}$, the \textit{$n$-dimensional undirected toroidal mesh}, denoted as $C(d_1,d_2,\ldots,d_n)$, has the vertex set $V(C(d_1,d_2,\ldots,d_n))=\{u_1u_2 \cdots u_n \colon\, u_i \in \{0,1,\ldots,d_i-1\},\,d_i \ge 2,\,1 \le i \le n\}$, and two vertices $u_1u_2 \cdots u_n$ and $v_1v_2 \cdots v_n$ in $C(d_1,d_2,\ldots,d_n)$ are adjacent if and only if there exists an integer $j \in \{1,2,\ldots,n\}$ such that $\vert u_j - v_j \vert \equiv 1$ {\rm (mod} $d_j${\rm )} and $u_i = v_i$ for all $i \in \{1,2,\ldots,j-1,j+1,\ldots,n\}$.
}
\end{definition}

Fig.~\ref{fig:toroidal-mesh} illustrates the $2$-dimensional undirected toroidal mesh $C(3,2)$ and the $3$-dimensional undirected toroidal mesh $C(3,3,2)$.

\begin{figure}[ht]
	\centering
	\begin{subfigure}[c]{0.4\textwidth}
		\centering
		\includegraphics[width=0.75\textwidth]{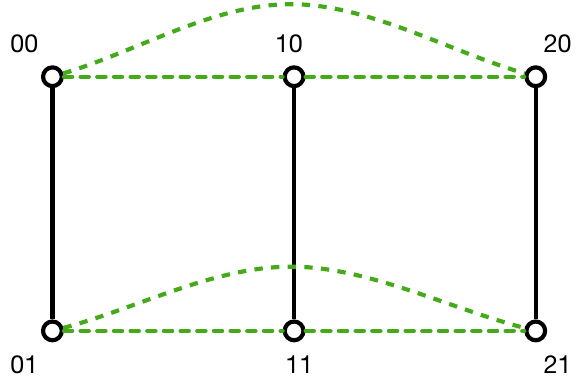}
		\caption{$C(3,2)$}
		\label{fig:C3_2}
	\end{subfigure}
	\begin{subfigure}[c]{0.4\textwidth}
		\centering
		\includegraphics[width=0.75\textwidth]{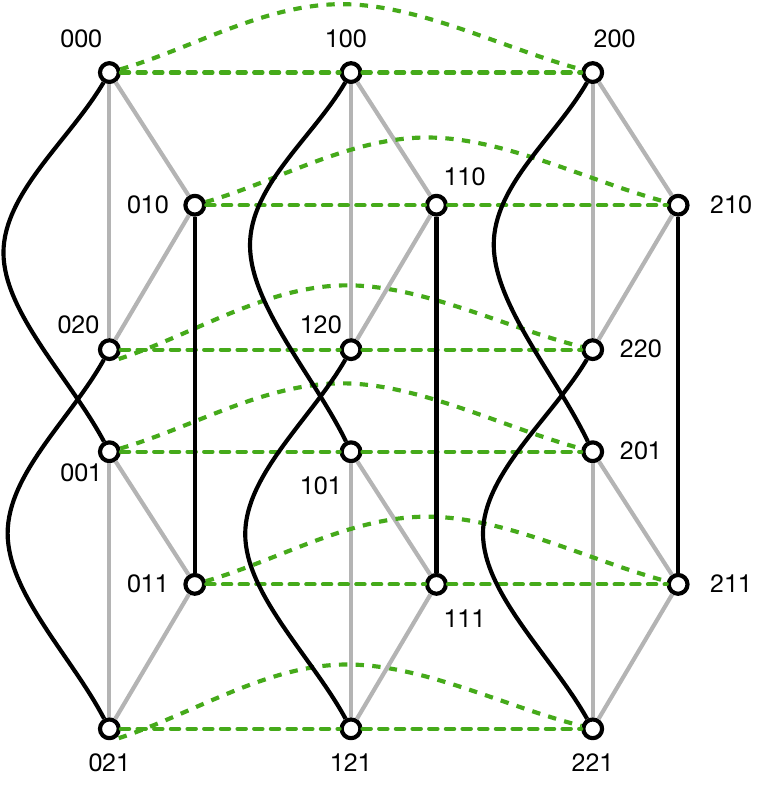}
		\caption{$C(3,3,2)$}
		\label{fig:C3_3_2}
	\end{subfigure}
	\caption{The illustration of undirected toroidal meshes}
	\label{fig:toroidal-mesh}
\end{figure}

The $n$-dimensional undirected toroidal mesh $C(d_1,d_2,\ldots,d_n)$ serves as a generalization of both the $k$-ary $n$-cube $Q_n^k$ and the $n$-dimensional hypercube $Q_n$. Specifically, when all dimensions are equal, meaning $d_1 = d_2 = \cdots = d_n = k$, the structure becomes $Q_n^k$. Additionally, when $d_1=d_2=\cdots=d_n=2$, the resulting structure is $Q_n$. Dvo{\v{r}}{\'a}k and Gu investigated the neighbor connectivity of both $Q_n$ and $Q_n^k$, as referenced in~\cite{dvovrak2020neighbor}. However, the neighbor connectivity of $C(d_1,d_2,\ldots,d_n)$ remains undetermined, and this will be addressed later.

Let's examine the structure of an $n$-dimensional undirected toroidal mesh. For $C(d_1,d_2,\ldots,d_n)$, where $n \ge 2$, by selecting a dimension $c$ (where $1 \le c \le n$), we can partition the mesh along the $c$-th dimension into $d_c$ subnetworks. For $0 \le i < d_c$, the $(i+1)$-th subnetwork, denoted as $C[i]$, is the subgraph of $C(d_1,d_2,\ldots,d_n)$ induced by the vertex set $\{u_1u_2 \cdots u_n \in V(C(d_1,d_2,\ldots,d_n)) \colon\, u_c=i\}$. It can be verified that $C[i]$ is isomorphic to $C(d_1,d_2,\ldots,d_{c-1},d_{c+1},\ldots,d_n)$. For example, when partitioning $C(3,3,2)$ along the second dimension, the subnetwork $C[0]$ is as shown in Fig.~\ref{fig:C0}. It can be observed from Fig.~\ref{fig:toroidal-mesh}(a) that $C[0]$ is isomorphic to $C(3,2)$.

\begin{figure}[ht]
	\centering
	\includegraphics[width=0.28\textwidth]{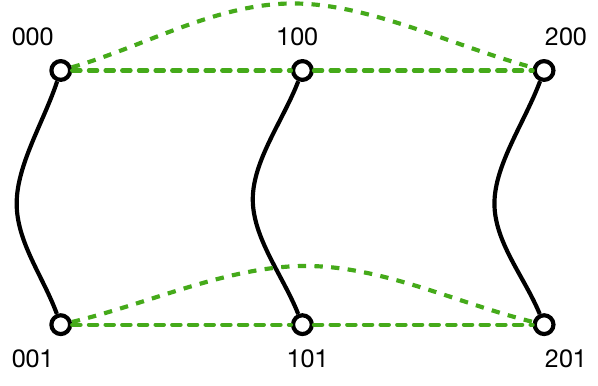}
	\caption{The illustration of $C[0]$ in $C(3,3,2)$}
	\label{fig:C0}
\end{figure}

Hereafter, for convenience, we assume that $C(d_1,d_2,\ldots,d_n)$ is partitioned into $d_c$ subnetworks, denoted as $C[0],C[1],\ldots,C[d_c-1]$, along the $c$-th dimension. Two different subnetworks $C[i]$ and $C[j]$ are said to be \textit{adjacent} if and only if $|i - j| \equiv 1$ (mod $d_c$). For $0 \le i < d_c$, a vertex $u=u_1u_2 \cdots u_{c-1}iu_{c+1} \cdots u_n$ in $C[i]$ is referred to as $u^i$. By Definition~\ref{def:mesh}, if a vertex $v\in V(C[j])$ with $j\ne i$ is adjacent to $u^i$ in the mesh, then $C[i]$ and $C[j]$ are adjacent. In this case, $v$ can be represented as $u^j=u_1u_2 \cdots u_{c-1}ju_{c+1} \cdots u_n$, which is called the \textit{outer neighbor} of $u$ in $C[j]$. Note that $u^j$ is the unique vertex adjacent to $u^i$ in $C[j]$. Obviously, for $d_k \ge 3$ (where $k \in \{1,2,\ldots,n\}$), a vertex $u$ in $C(d_1,d_2,\ldots,d_n)$ has exactly $2n$ neighbors. Specifically, these neighbors are:
\[
u_1u_2 \cdots u_{j-1}((u_j \pm 1)\ \text{mod}\ d_j)u_{j+1} \cdots u_n\ \text{for}\ j \in \{1,2,\ldots,n\}.
\]
Among these neighbors, there are $2n-2$ neighbors in $C[u_c]$. These are given by
\[
u_1u_2 \cdots u_{j-1}((u_j \pm 1)\ \text{mod}\ d_j)u_{j+1} \cdots u_n\ \text{for}\ j \in \{1,2,\ldots,c-1,c+1,\ldots,n\}.
\]
Additionally, two outer neighbors of $u$ are as follows: one is located in $C[(u_c+1)\ \text{mod}\ d_c]$, which is
\[
u_1u_2 \cdots u_{c-1}((u_c+1)\ \text{mod}\ d_c)u_{c+1} \cdots u_n,
\]
and the other is located in $C[(u_c-1)\ \text{mod}\ d_c]$, which is 
\[
u_1u_2 \cdots u_{c-1}((u_c-1)\ \text{mod}\ d_c)u_{c+1} \cdots u_n.
\]

From the above concepts, we have the following property.

\begin{lemma} \label{lm:neighbors}
Let $C=C(d_1,d_2,\ldots,d_n)$, where $n \ge 2$ and $d_i \ge 3$ for $i \in \{1,2,\ldots,n\}$, and let $u=u_1u_2 \cdots u_n \in V(C)$. If $C$ is partitioned along the $c$-th dimension {\rm (}$1 \le c \le n${\rm )}, then $u$ is located in $C[u_c]$ and it has exactly $2n$ neighbors in $C$, including $2n-2$ neighbors in $C[u_c]$.
\end{lemma}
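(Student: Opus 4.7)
The plan is to derive the statement directly from Definition~\ref{def:mesh} together with the partitioning convention fixed just before the lemma. The assertion splits into three parts: (i) $u\in V(C[u_c])$; (ii) $u$ has exactly $2n$ neighbors in $C$; and (iii) exactly $2n-2$ of those neighbors lie in $C[u_c]$.

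First I would verify (i): by the partitioning convention, $C[u_c]$ is induced by $\{v_1\cdots v_n : v_c=u_c\}$, and since the $c$-th coordinate of $u$ is $u_c$, we have $u\in V(C[u_c])$ immediately. For (ii), I would list the potential neighbors by running over the dimension index $j\in\{1,\dots,n\}$. By Definition~\ref{def:mesh}, every neighbor of $u$ is obtained by replacing $u_j$ with an element $v_j\in\{0,1,\dots,d_j-1\}$ satisfying $|u_j-v_j|\equiv1\pmod{d_j}$ while keeping the other coordinates fixed. The key arithmetic observation is that for $d_j\ge 3$ the two values $(u_j+1)\bmod d_j$ and $(u_j-1)\bmod d_j$ are distinct and both differ from $u_j$; consequently dimension $j$ contributes exactly two neighbors, and different dimensions produce distinct vertices because the multi-indices agree outside the changed coordinate. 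Summing over the $n$ dimensions yields exactly $2n$ neighbors.

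For (iii), I would isolate the two neighbors coming from the $c$-th dimension: they are $u_1\cdots u_{c-1}((u_c\pm 1)\bmod d_c)u_{c+1}\cdots u_n$, which by definition of the partition lie in $C[(u_c+1)\bmod d_c]$ and $C[(u_c-1)\bmod d_c]$ respectively (these are the outer neighbors of $u$). All other $2n-2$ neighbors keep the $c$-th coordinate equal to $u_c$, hence belong to $V(C[u_c])$, and since the edges in $C$ between two vertices of $C[u_c]$ are preserved in the induced subgraph, they are neighbors of $u$ in $C[u_c]$ as well.

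No real obstacle is anticipated: the whole argument is a careful bookkeeping exercise on the explicit neighbor list already enumerated in the paragraph preceding the lemma. The only point that actually uses the hypothesis $d_i\ge 3$ is ensuring that $(u_j+1)\bmod d_j \ne (u_j-1)\bmod d_j$, so I would make that single step explicit to justify why the count is $2n$ rather than a smaller number (if some $d_j=2$ were allowed, the two shifts in dimension $j$ would coincide and yield only one neighbor).
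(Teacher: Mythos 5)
Your proposal is correct and follows essentially the same route as the paper, which simply enumerates the $2n$ neighbors dimension by dimension in the paragraph preceding the lemma and observes that the two neighbors obtained by shifting the $c$-th coordinate are the outer neighbors while the remaining $2n-2$ stay in $C[u_c]$. Your explicit remark that $d_j\ge 3$ is what guarantees $(u_j+1)\bmod d_j\ne(u_j-1)\bmod d_j$ is a nice touch the paper leaves implicit, but it is the same argument.
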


Consider a simple example to illustrate Lemma~\ref{lm:neighbors}, as depicted in Fig.~\ref{fig:neighbors}. Let $u=00 \cdots 0$ be a vertex in $C(d_1,d_2,\ldots,d_n)$ and the mesh is partitioned along the first dimension. As we can see, $u$ has $2n-2$ neighbors in $C[0]$ and two out neighbors, one in $C[1]$ and another in $C[d_1-1]$.

\begin{figure}[ht]
	\centering
	\includegraphics[width=0.75\textwidth]{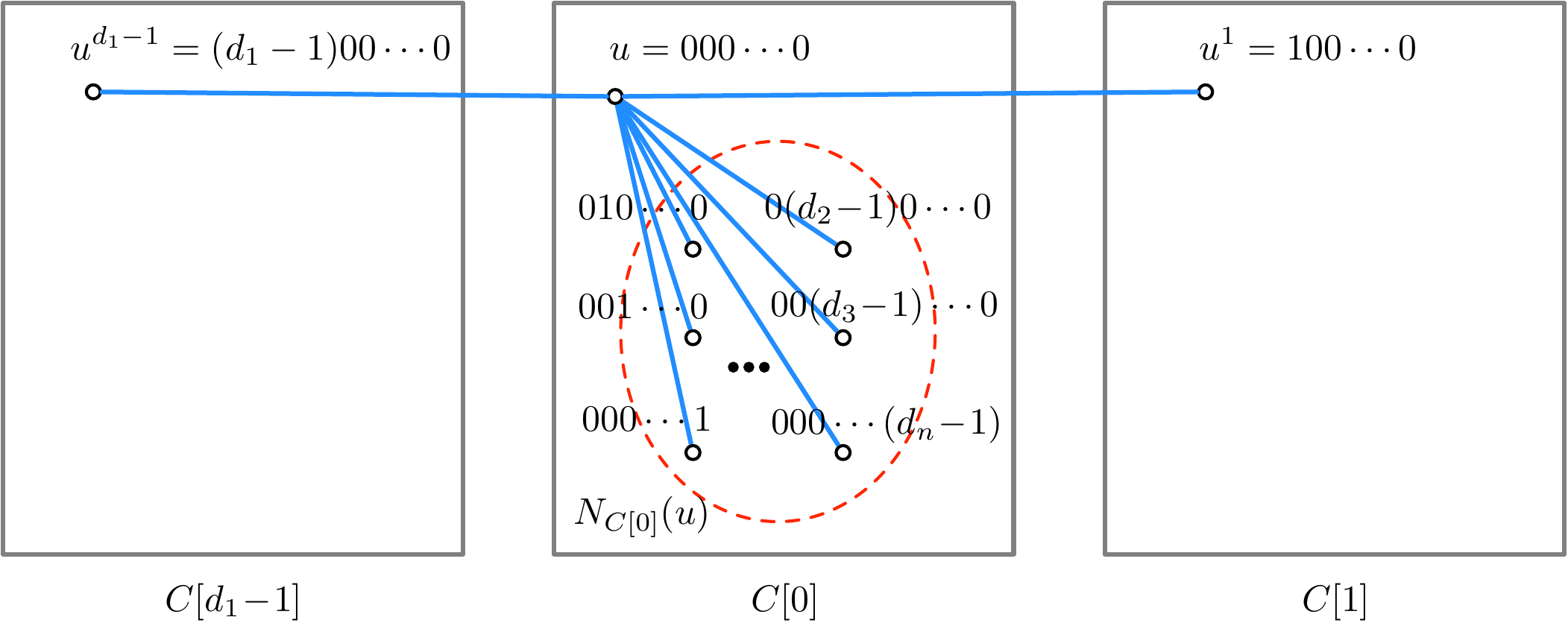}
	\caption{The distribution of neighbors for a vertex $u$ in $C(d_1,d_2,\ldots,d_n)$.}
	\label{fig:neighbors}
\end{figure}

Park~\cite{park2021torus} demonstrated the connectivity of $C(d_1,d_2,\ldots,d_n)$ to investigate the problem of disjoint path covers. The result is presented below.

\begin{lemma} \label{lm:connectivity}
{\rm (See~\cite{park2021torus}.)} 
Let $C=C(d_1,d_2,\ldots,d_n)$, where $n \ge 2$ and $d_i \ge 3$ for $1 \le i \le n$. Then, $\kappa(C)=2n$.
\end{lemma}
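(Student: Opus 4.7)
The plan is to prove the equality $\kappa(C)=2n$ by establishing each inequality separately. The upper bound is immediate from Lemma~\ref{lm:neighbors}: every vertex of $C$ has exactly $2n$ neighbors, so $C$ is $2n$-regular and $\kappa(C)\le\delta(C)=2n$; note that $|V(C)|=\prod_i d_i\ge 3^n > 2n+1$, so removing the closed neighborhood of a single vertex still leaves a nonempty residual graph, and the standard degree bound applies.

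For the lower bound I would proceed by induction on $n\ge 2$. In the base case $n=2$, the graph $C(d_1,d_2)$ is the Cartesian product of two cycles and is $4$-regular; here $\kappa=4$ follows by Menger's theorem after exhibiting four internally disjoint paths between any two vertices using the two orthogonal cycles through each vertex. For the inductive step, I would assume the claim in dimension $n-1\ge 2$ and partition $C$ along some dimension $c$ into $d_c$ subnetworks $C[0],\ldots,C[d_c-1]$, each isomorphic to a toroidal mesh of dimension $n-1$ and hence, by induction, of connectivity $2(n-1)$. I would then fix an arbitrary $F\subseteq V(C)$ with $|F|\le 2n-1$, set $F_i=F\cap V(C[i])$, and aim to prove that $C-F$ is connected.

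The argument splits according to how $F$ distributes over the slices. If $|F_i|\le 2n-3$ for every $i$, induction says each $C[i]-F_i$ is connected; moreover, two consecutive slices $C[i]$ and $C[i+1]$ are joined in $C$ by a perfect matching of size $\prod_{k\ne c}d_k\ge 3^{n-1}$, so deleting $F$ kills at most $|F|\le 2n-1$ of these inter-slice edges and leaves at least $3^{n-1}-(2n-1)>0$ (valid in the induction range $n\ge 3$) intact to glue the slices together. The main obstacle will be the opposite case, where some slice $C[j]$ carries $|F_j|\ge 2n-2$ and may itself break into components. Here the remaining budget is $|F|-|F_j|\le 1$, so every slice with $i\ne j$ remains connected and the $d_c-1\ge 2$ surviving slices merge, via the inter-slice matchings, into one connected subgraph of $C-F$. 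It then suffices to attach every component $K$ of $C[j]-F_j$ to this block: each vertex of $K$ has two outer neighbors (one in $C[j-1]$ and one in $C[j+1]$), so $K$ has at least $2|V(K)|\ge 2$ potential escape edges, and at most one of their endpoints can lie in $F\setminus F_j$, so at least one such edge survives. This contradicts $F$ being a cut and completes the induction, yielding $\kappa(C)\ge 2n$.
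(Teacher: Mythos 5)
The paper does not prove this lemma at all---it is imported verbatim from Park's work \cite{park2021torus}---so your proposal is a self-contained substitute rather than a variant of an argument in the text. As such it is essentially sound. The upper bound via $2n$-regularity is correct (and your check that $|V(C)|>2n+1$ is exactly what is needed for $N(v)$ to be a genuine cut isolating $v$). The inductive lower bound also holds up: the case split is exhaustive, since two slices with $|F_i|\ge 2n-2$ would force $|F|\ge 4n-4>2n-1$; in the first case each slice survives connected and the perfect matchings of size $\prod_{k\ne c}d_k\ge 3^{n-1}>2n-1$ between consecutive slices (valid for $n\ge 3$, which is the induction range) glue them; and in the second case the budget outside $C[j]$ is at most $1$, the outer neighbors of distinct vertices of a component $K$ are pairwise distinct, so at least $2|V(K)|-1\ge 1$ escape edges survive into the connected block formed by the other $d_c-1\ge 2$ slices. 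Your decomposition is, incidentally, the same slice-plus-matching technique the paper itself deploys for the harder Theorem~\ref{thm:lower-connectivity}, so the argument fits the paper's toolkit. The only genuinely under-supplied step is the base case $n=2$: ``exhibiting four internally disjoint paths using the two orthogonal cycles'' is asserted, not constructed, and the two subcases (endpoints agreeing in one coordinate versus differing in both) need to be written out---or one can instead invoke the known formula $\kappa(G\,\Box\, H)=\min\{\kappa(G)|V(H)|,\kappa(H)|V(G)|,\delta(G)+\delta(H)\}$ for Cartesian products, which gives $\min\{2d_2,2d_1,4\}=4$ directly. With that detail filled in, the proof is complete.
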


Dvo{\v{r}}{\'a}k and Gu~\cite{dvovrak2020neighbor} provided the following useful lemma.

\begin{lemma} \label{lm:disjoint-paths}
{\rm (See~\cite{dvovrak2020neighbor}.)} 
Let $f \ge 0$, $k \ge 1$ be integers and $G$ be an $(f+k)$-connected graph. If $F \subseteq V(G)$ with $\vert F \vert = f$ and $Y \subseteq V(G-F)$ with $\vert Y \vert = k$, then the following assertions hold.
\begin{enumerate}[{\rm(1)}]
\vspace{-0.3cm}
\item 
For any vertex set $X \subseteq V(G-F)$ with $\vert X \vert = k$, there is a set of $k$ pairwise disjoint $(X, Y)$-paths in $G-F$;
\vspace{-0.3cm}
\item 
For any vertex $x \in V(G-F)$, there is an $(x, Y)$-fan in $G-F$.
\end{enumerate}
\end{lemma}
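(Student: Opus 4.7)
The plan is to reduce both parts to Menger's theorem after observing that $G - F$ inherits a useful amount of connectivity from $G$. Specifically, I would first show that $H := G - F$ is $k$-connected: if $S \subseteq V(H)$ with $|S| < k$ were a vertex cut (or reduced $H$ to a single vertex), then $F \cup S$ would be a vertex cut of $G$ of size at most $f + k - 1$, contradicting the $(f+k)$-connectivity of $G$. From here on the argument takes place in the $k$-connected graph $H$.

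For part (1), the natural approach is an auxiliary-vertex construction. Form $H'$ from $H$ by adjoining two new vertices $x^{\ast}$ and $y^{\ast}$, making $x^{\ast}$ adjacent to every vertex of $X$ and $y^{\ast}$ adjacent to every vertex of $Y$. A family of $k$ internally disjoint $x^{\ast}$-$y^{\ast}$ paths in $H'$, with endpoints $x^{\ast}$ and $y^{\ast}$ stripped off, yields the desired $k$ pairwise disjoint $(X, Y)$-paths in $H$. To produce such a family via Menger's theorem, I would verify that every $(x^{\ast}, y^{\ast})$-separator in $H'$ has size at least $k$: such a separator cannot contain $x^{\ast}$ or $y^{\ast}$, and a set $T \subseteq V(H)$ with $|T| < k$ cannot separate the $k$-set $X$ from the $k$-set $Y$ in the $k$-connected graph $H$, a standard consequence of $k$-connectivity.

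For part (2), I would apply the fan lemma, which is the specialization of Menger: in a $k$-connected graph $H$, for any vertex $x$ and any $k$-element $Y \subseteq V(H) \setminus \{x\}$, there exist $k$ paths from $x$ to $Y$ that pairwise meet only at $x$. A clean derivation mirrors the construction above: add a single auxiliary vertex $y^{\ast}$ adjacent to every vertex of $Y$, argue that $x$ and $y^{\ast}$ cannot be separated by fewer than $k$ vertices in the enlarged graph, invoke Menger for $k$ internally disjoint $(x, y^{\ast})$-paths, and then delete $y^{\ast}$ to recover the $(x, Y)$-fan.

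The main obstacle, and what I would spend the most care on, is the handling of the degenerate cases: $X \cap Y \ne \varnothing$ in (1) and $x \in Y$ in (2). In these situations the auxiliary construction can create multi-edges or produce a trivial ``path'' of length $0$, and one must route the shared vertices as singleton paths, reduce the problem to an $(X \setminus Y, Y \setminus X)$-linkage inside the $(k - |X \cap Y|)$-connected remainder of $H$, and check that the connectivity hypothesis still supports Menger's theorem on that residual instance. Once those edge cases are dispatched, both parts follow cleanly from the classical Menger/fan framework.
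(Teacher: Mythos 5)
The paper does not prove this lemma at all: it is imported verbatim from Dvo\v{r}\'{a}k and Gu~\cite{dvovrak2020neighbor} and used as a black box, so there is no in-paper argument to compare yours against. Your proposal is a correct, self-contained derivation from Menger's theorem, and it is essentially the standard one. The opening reduction is sound: $(f+k)$-connectivity forces $|V(G)|\ge f+k+1$, so $H=G-F$ has more than $k$ vertices, and any set $S$ with $|S|<k$ disconnecting $H$ would make $F\cup S$ a cut of $G$ of size at most $f+k-1$. The separator bound for the auxiliary graph in part (1) also goes through: a set $T$ with $|T|<k$ misses some $u\in X$ and some $v\in Y$, and $H-T$ is connected, so $x^{\ast}$ and $y^{\ast}$ remain joined; since $x^{\ast}$ and $y^{\ast}$ are non-adjacent, Menger applies directly and stripping the two auxiliary endpoints yields the $k$ pairwise disjoint $(X,Y)$-paths.

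One remark on the part you flag as the main obstacle: the degenerate cases are lighter than you suggest. Adjoining new vertices $x^{\ast}$ and $y^{\ast}$ to a simple graph cannot create multi-edges, and when $u\in X\cap Y$ the path $\langle x^{\ast},u,y^{\ast}\rangle$ simply strips down to the singleton $\langle u\rangle$, which this paper explicitly admits as a path of length $0$; no separate reduction to an $(X\setminus Y,\,Y\setminus X)$-linkage is needed. The only place genuinely requiring care is part (2) when $x\in Y$, since then $x$ and $y^{\ast}$ are adjacent and the separator form of Menger does not apply verbatim; the usual fix is to reserve the edge $xy^{\ast}$ for the trivial fan-path $\langle x\rangle$ and apply Menger in the graph with that edge deleted (or with $x$'s role in $Y$ handled first). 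With that small adjustment your argument is complete, and it has the merit of making the paper self-contained where it currently relies on an external citation.
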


\section{Neighbor connectivity of the undirected toroidal meshes} 
\label{Sec:main}

In this section, we first find the lower bound of the neighbor connectivity of the undirected toroidal mesh and then the upper bound. Hence, we can determine this network's neighbor connectivity.

\subsection{Lower bound of $\kappa_{NB}(C(d_1,d_2,\ldots,d_n))$}
\label{Sec:Lower-bound}

When examining the neighbor connectivity of a network, it is generally assumed that certain \textit{faulty source vertices} can affect neighboring vertices in the network. The faulty source vertices and the vertices that become faulty as a result are collectively referred to as \textit{faulty vertices}. All other vertices not falling into this category are considered \textit{healthy vertices}. Consequently, $U$ typically represents the set of all faulty source vertices, $N[U]$ denotes the set of all faulty vertices, and $G \ominus U$ refers to the subgraph generated by all remaining healthy vertices.

Before given the lower bound of $\kappa_{NB}(C(d_1,d_2,\ldots,d_n))$, we introduce some lemmas that will be used later.

\begin{lemma} \label{lm:common-neighbor}
Let $C=C(d_1,d_2,\ldots,d_n)$, where $n \ge 2$ and $d_i \ge 3$ for $1 \le i \le n$. If $x,y\in V(C)$ are two distinct vertices, then $\vert N(x) \cap N(y) \vert \in \{0,1,2\}$. Particularly, if $x$ and $y$ are adjacent, then $\vert N(x) \cap N(y) \vert \in \{0,1\}$.
\end{lemma}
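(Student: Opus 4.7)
The natural strategy is a case analysis on $h(x,y)$, the number of coordinates in which $x$ and $y$ differ (a kind of Hamming distance, since we are in a coordinate model). Recall from Definition~\ref{def:mesh} that a vertex $z$ is adjacent to $u$ iff $z$ and $u$ agree in all but one coordinate $j$, where $z_j-u_j\equiv\pm1\pmod{d_j}$. So a common neighbor $z$ of $x$ and $y$ must simultaneously differ from $x$ in exactly one coordinate and from $y$ in exactly one coordinate, which forces $x$ and $y$ to differ in at most two coordinates. Hence $h(x,y)\geq 3$ immediately gives $N(x)\cap N(y)=\varnothing$, and it suffices to analyze $h(x,y)\in\{1,2\}$.

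\textbf{Case $h(x,y)=1$.} Let $j$ be the coordinate in which $x$ and $y$ differ. The argument above also forces any common neighbor $z$ to differ from both $x$ and $y$ in coordinate $j$ (if $z$ agreed with $x$ at $j$, then $z$ and $y$ would already disagree at $j$, so to be adjacent to $y$ they would have to agree everywhere else, making $z$ agree with $x$ everywhere — impossible). Thus $z_j\in\{x_j\pm 1\}\cap\{y_j\pm 1\}\pmod{d_j}$. Intersecting these two two-element sets modulo $d_j\geq 3$ yields at most two elements, and one checks that it yields exactly two elements only when $d_j=4$ and $x_j,y_j$ are antipodal on the $j$-cycle (a situation in which $x$ and $y$ are \emph{not} adjacent). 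When $x$ and $y$ are adjacent (so $y_j-x_j\equiv\pm1\pmod{d_j}$), the intersection has at most one element, equal to the third vertex of the triangle precisely when $d_j=3$, and empty otherwise. This handles both the general bound $|N(x)\cap N(y)|\leq 2$ and the refined bound $\leq 1$ in the adjacent case.

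\textbf{Case $h(x,y)=2$.} Let $j_1,j_2$ be the differing coordinates. A common neighbor $z$ of $x$ and $y$ must pick one of these two coordinates as the single coordinate in which it disagrees with $x$, and likewise for $y$; furthermore these two choices must be distinct (otherwise the argument of Case $h=1$ would force $x,y$ to agree in the other coordinate). So there are at most two types of common neighbor: one with $z_{j_1}=y_{j_1}$ and $z_{j_2}=x_{j_2}$, and one with $z_{j_1}=x_{j_1}$ and $z_{j_2}=y_{j_2}$. Each exists iff the corresponding coordinate difference is $\pm 1$ modulo $d_{j_i}$, and each contributes at most one vertex. Hence $|N(x)\cap N(y)|\leq 2$ in this case as well. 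Note that $x,y$ cannot be adjacent here.

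Combining the three cases gives $|N(x)\cap N(y)|\in\{0,1,2\}$ in general, and $\in\{0,1\}$ when $x,y$ are adjacent. The main subtle point is Case $h=1$ with $d_j\in\{3,4\}$, where small cycle lengths create wrap-around coincidences that must be tracked carefully; apart from this, the case analysis is routine.
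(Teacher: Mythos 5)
Your proof is correct, and it reaches the same conclusions as the paper by a cleaner, purely coordinate-based route. The paper chooses a dimension $c$ in which $x$ and $y$ differ, partitions the mesh into subnetworks along $c$, and then distinguishes whether $x$ and $y$ lie on the same ``$c$-cycle'' or not, enumerating the subcases $d_c=3,4,5,\ge 6$ and adjacent versus non-adjacent subnetworks largely by inspection of figures. Your organization by the number $h(x,y)$ of differing coordinates covers exactly the same territory ($h=1$ is the paper's Case~1, $h=2$ together with $h\ge 3$ is its Case~2), but the key counts fall out of a single modular computation: for $h=1$ everything reduces to $\bigl|\{x_j\pm 1\}\cap\{y_j\pm 1\}\bigr| \pmod{d_j}$, which immediately isolates the two exceptional phenomena (the triangle when $d_j=3$ and the antipodal pair when $d_j=4$) without case-by-case pictures; for $h=2$ the two candidate common neighbors are written down explicitly; and $h\ge 3$ is dismissed at once. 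One small imprecision in your $h=2$ case: each of the two candidates is a common neighbor iff \emph{both} coordinate differences $|x_{j_1}-y_{j_1}|$ and $|x_{j_2}-y_{j_2}|$ are $\equiv 1$ modulo the respective $d_{j_i}$ (not just ``the corresponding'' one), so the two candidates exist or fail together and in fact $|N(x)\cap N(y)|\in\{0,2\}$ there, matching the paper's Case~2.2; this does not affect the bound you claim.
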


\begin{proof}
Since $x$ and $y$ are two distinct vertices, we can partition $C(d_1,d_2,\ldots,d_n)$ along the $c$-th dimension, where $1 \le c \le n$, such that $x \in V(C[i])$ and $y \in V(C[j])$ for $0 \le i,j < d_c$ and $i \neq j$. Note that the subgraph of $C(d_1,d_2,\ldots,d_n)$ induced by the vertex set $\{x^0,x^1,\ldots,x^{d_c-1}\}$ forms a cycle, denoted as $C_1$. Similarly, the induced subgraph of the vertex set $\{y^0,y^1,\ldots,y^{d_c-1}\}$ in $C(d_1,d_2,\ldots,d_n)$ is also a cycle, denoted as $C_2$. Consider the following two cases.

\medskip
\textbf{Case 1:} $V(C_1) \cap V(C_2) \neq \varnothing$.

In this case, $x$ and $y$ are in the same cycle (i.e., $C_1 = C_2$), with $x^r=y^r$ for $r \in \{0,1,\ldots,d_c-1\}$. We discuss two scenarios based on whether $x$ and $y$ are adjacent.

\medskip
\textbf{Case 1.1:} $x$ and $y$ are adjacent.

Since $x \in V(C[i])$ and $y \in V(C[j])$ are two adjacent vertices, by Lemma \ref{lm:neighbors}, $C[i]$ and $C[j]$ are adjacent subnetworks. Moreover, $y$ is the unique outer neighbor of $x$ in $C[j]$, and $x$ is the unique outer neighbor of $y$ in $C[i]$. If $d_c=3$, then $C[i]$, $C[j]$, and the remaining subnetwork are pairwise adjacent. Thus, $x$ and $y$ have a unique common outer neighbor, say $u$, in the third subnetwork, i.e., $|N(x) \cap N(y)| = |\{u\}| = 1$ (see Fig.~\ref{fig:cn_case1-1}(a)). One the other hand, if $d_c \ge 4$, then $x$ and $y$ have no common neighbors. Thus, $\vert N(x) \cap N(y) \vert = 0$ (see Fig.~\ref{fig:cn_case1-1}(b)).

\begin{figure}[ht]
	\centering
	\begin{subfigure}{0.8\textwidth}
		\centering
		\includegraphics[width=0.66\textwidth]{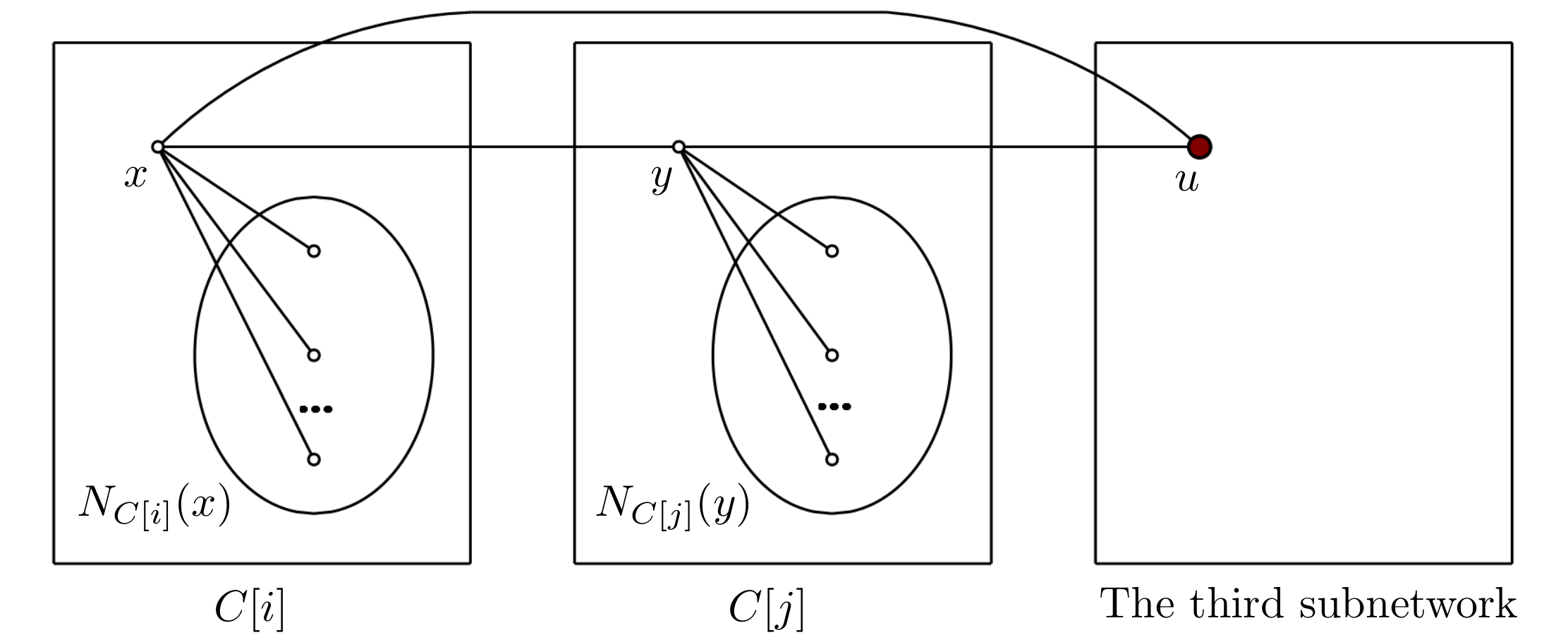}
		\caption{The situation when $d_c=3$}
		\label{fig:cn_case1-1_a}
	\end{subfigure}
	\vspace{5pt}

	\begin{subfigure}{0.8\textwidth}
		\centering
		\includegraphics[width=0.90\textwidth]{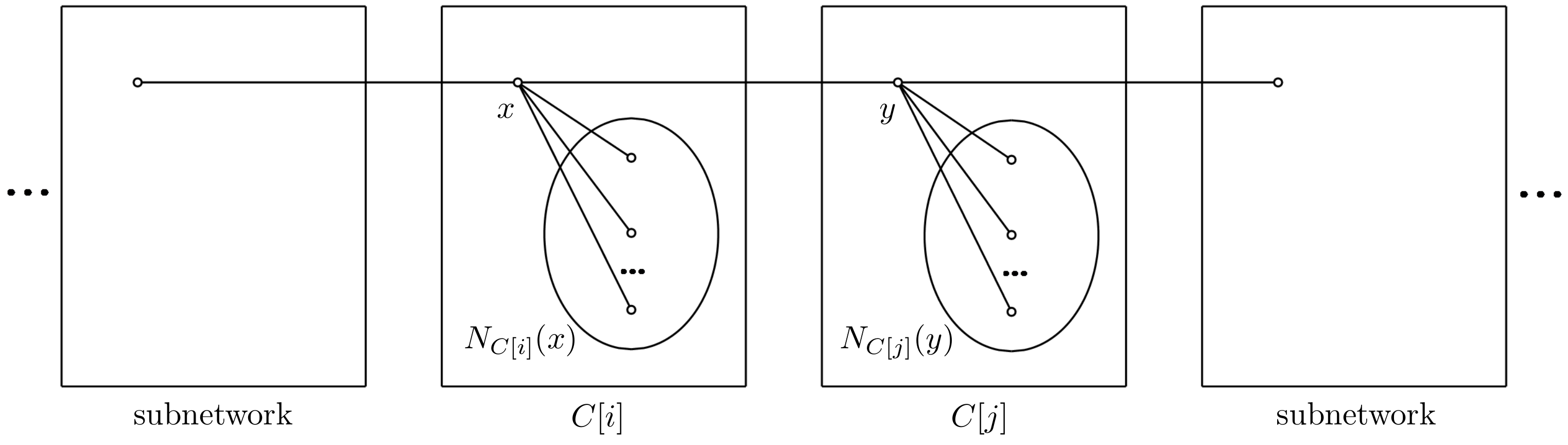}
		\caption{The situation when $d_c \ge 4$}
		\label{fig:cn_case1-1_b}
	\end{subfigure}
	\caption{The illustration of Case 1.1 in Lemma~\ref{lm:common-neighbor}.}
	\label{fig:cn_case1-1}
\end{figure}

\medskip
\textbf{Case 1.2:} $x$ and $y$ are not adjacent.

Since $x$ and $y$ are in the same cycle, we have $d_c \ge 4$ by Lemma \ref{lm:neighbors}.

\begin{figure}[H]
	\centering
	\begin{subfigure}{0.8\textwidth}
		\centering
		\includegraphics[width=0.75\textwidth]{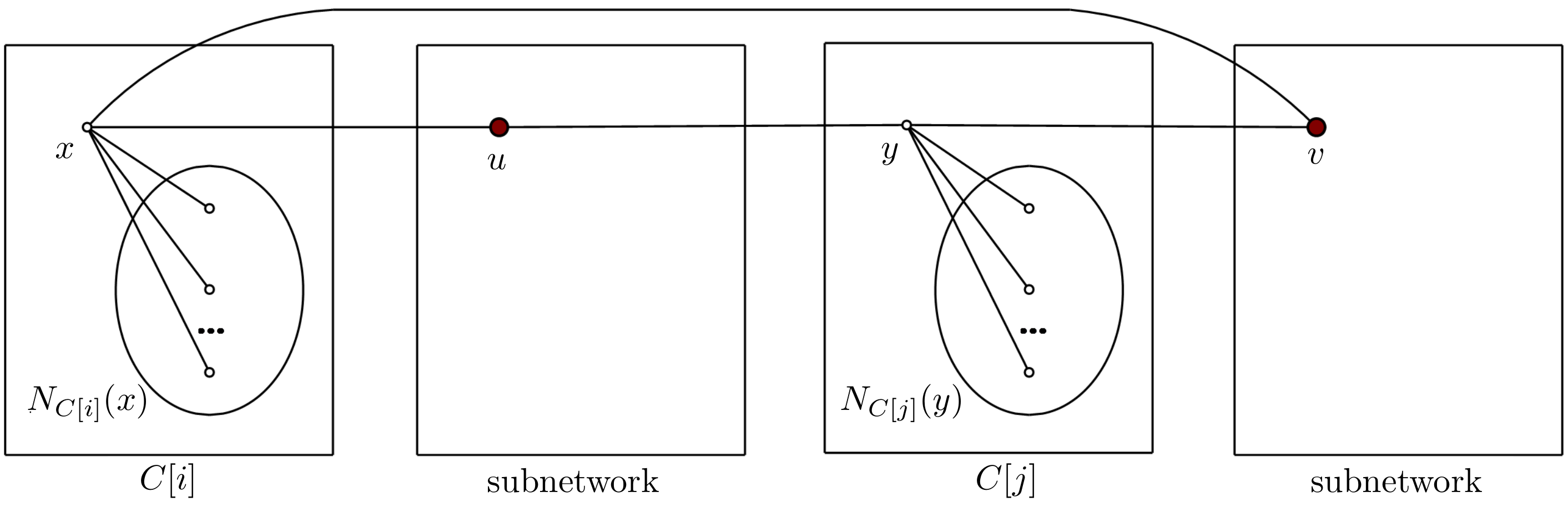}
		\caption{The situation when $d_c=4$}
		\label{fig:cn_case1-2_a}
	\end{subfigure}
	\vspace{5pt}

	\begin{subfigure}{0.8\textwidth}
		\centering
		\includegraphics[width=0.94\textwidth]{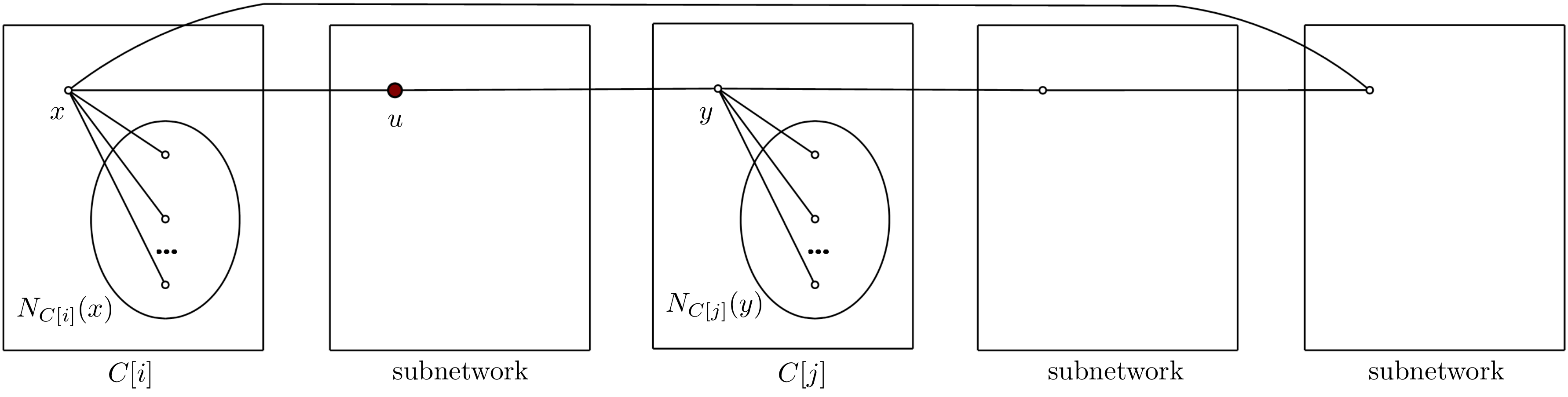}
		\caption{The situation when $d_c=5$}
		\label{fig:cn_case1-2_b}
	\end{subfigure}
	\vspace{5pt}

	\begin{subfigure}{0.8\textwidth}
		\centering
		\includegraphics[width=1.00\textwidth]{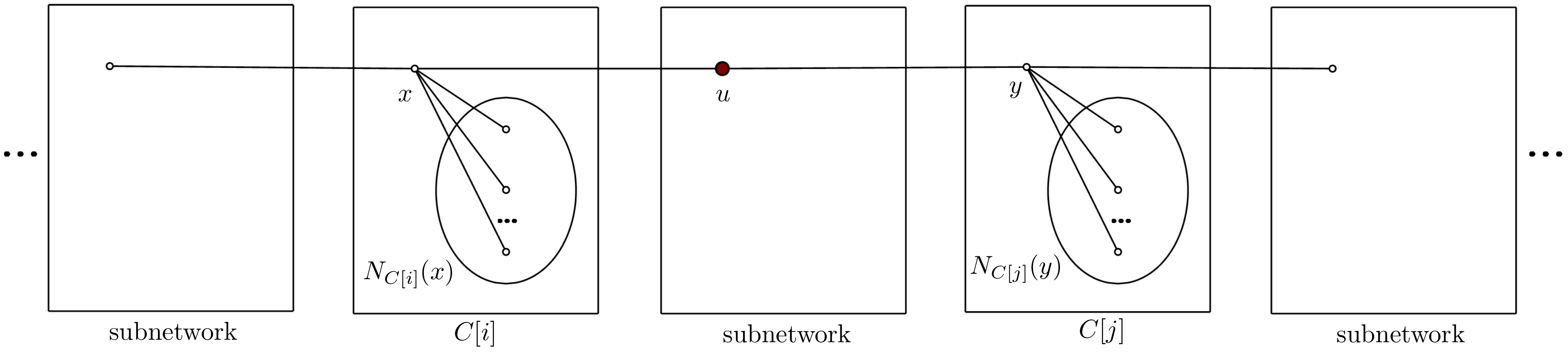}
		\caption{The situation when $d_c \ge 6$ and $|N(x) \cap N(y)| = 1$}
		\label{fig:cn_case1-2_c}
	\end{subfigure}
	\vspace{5pt}

	\begin{subfigure}{0.8\textwidth}
		\centering
		\includegraphics[width=1.00\textwidth]{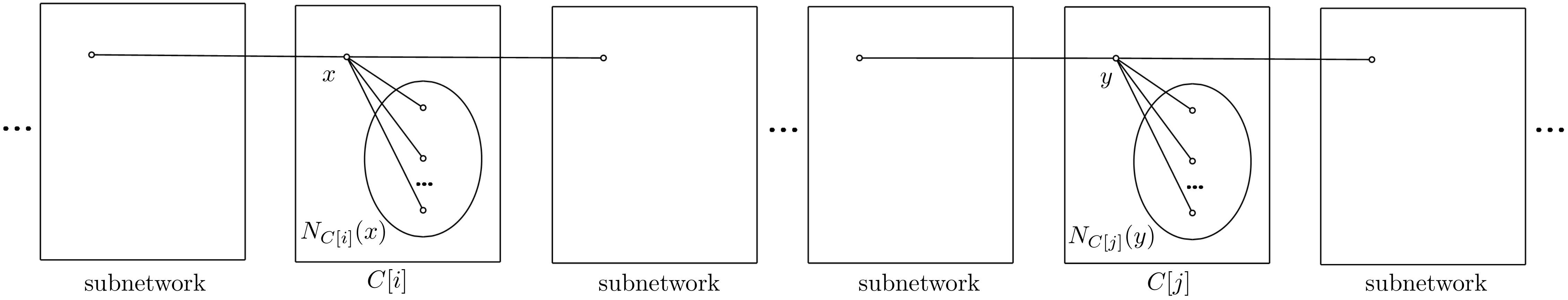}
		\caption{The situation when $d_c \ge 6$ and $|N(x) \cap N(y)| = 0$}
		\label{fig:cn_case1-2_d}
	\end{subfigure}
	\caption{The illustration of Case 1.2 in Lemma~\ref{lm:common-neighbor}.}
	\label{fig:cn_case1-2}
\end{figure}

If $d_c = 4$, then $x$ and $y$ have exactly two common outer neighbors, say $u$ and $v$. Thus, $|N(x) \cap N(y)| = |\{u, v\}| = 2$ (see Fig.~\ref{fig:cn_case1-2}(a)).

If $d_c = 5$, then $x$ and $y$ have only one common outer neighbor, say $u$. Thus, $|N(x) \cap N(y)| = |\{u\}| = 1$ (see Fig.~\ref{fig:cn_case1-2}(b)).

When $d_c \ge 6$, two situations need to be considered. If $C[i]$ and $C[j]$ share a common adjacent subnetwork, this subnetwork exists uniquely. In this case, $x$ and $y$ have a unique common neighbor, say $u$, within this subnetwork. Thus, $|N(x) \cap N(y)| = |\{u\}| = 1$ (see Fig.~\ref{fig:cn_case1-2}(c)). Alternatively, if they do not share a common adjacent subnetwork, then $|N(x) \cap N(y)| = 0$ (see Fig.~\ref{fig:cn_case1-2}(d)).

\textbf{Case 2:} $V(C_1) \cap V(C_2) = \varnothing$.

In this case, $x$ and $y$ are in different cycles, with $x^r \neq y^{r'}$ for $r,r' \in \{0,1,\ldots,d_c-1\}$. By Lemma \ref{lm:neighbors}, $x$ and $y$ are not adjacent. We discuss two scenarios based on whether $x$ and $y$ are located in adjacent subnetworks.

\medskip
\textbf{Case 2.1:} $C[i]$ and $C[j]$ are non-adjacent subnetworks.

In this case, $d_c \ge 4$, and by Lemma \ref{lm:neighbors}, $\vert N(x) \cap N(y) \vert = 0$. This situation is illustrated in Fig~\ref{fig:cn_case2}(a). 

\medskip
\textbf{Case 2.2:} $C[i]$ and $C[j]$ are adjacent subnetworks.

Without loss of generality, let $i=0$ and $j=1$. Hence, we have $x=x^0 \in V(C[0])$ and $y=y^1 \in V(C[1])$. If $(x^1, y) \in E(C[1])$, then it follows that $(y^0, x) \in E(C[0])$. Conversely, if $(y^0, x) \in E(C[0])$, then $(x^1, y) \in E(C[1])$. In this situation, $N(x) \cap N(y) = \{x^1, y^0\}$. On the other hand, if $(x^1, y) \notin E(C[1])$, then $(y^0, x) \notin E(C[0])$. Similarly, if $(y^0, x) \notin E(C[0])$, then $(x^1, y) \notin E(C[1])$. In this situation, we can conclude that $N(x) \cap N(y) = \varnothing$. This shows that $\vert N(x) \cap N(y) \vert \in \{0,2\}$. These scenarios are illustrated in Fig.~\ref{fig:cn_case2}(b) and Fig.~\ref{fig:cn_case2}(c), respectively.
\end{proof}

\begin{figure}[htbp]
	\centering
	\begin{subfigure}{0.85\textwidth}
		\centering
		\includegraphics[width=1.00\textwidth]{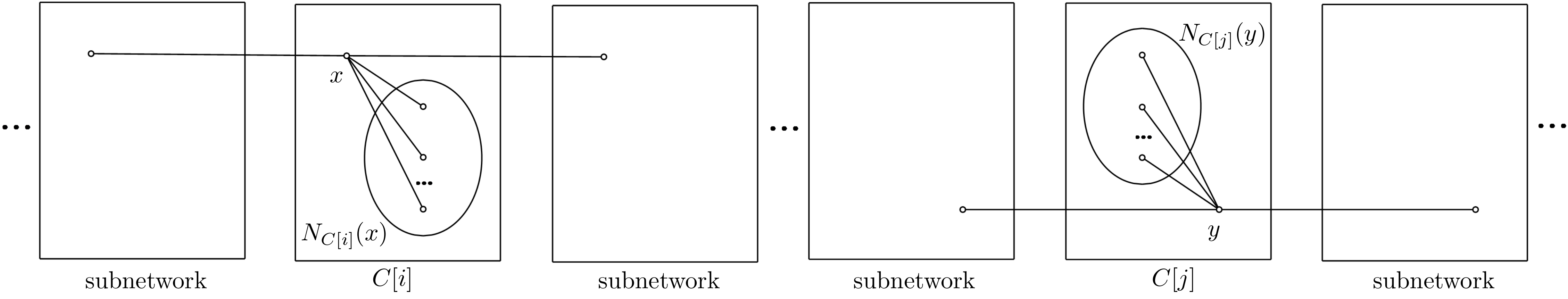}
		\caption{The illustration of Case 2.1}
		\label{fig:cn_case2_a}
	\end{subfigure}
	\vspace{5pt}

	\begin{subfigure}{0.85\textwidth}
		\centering
		\includegraphics[width=0.75\textwidth]{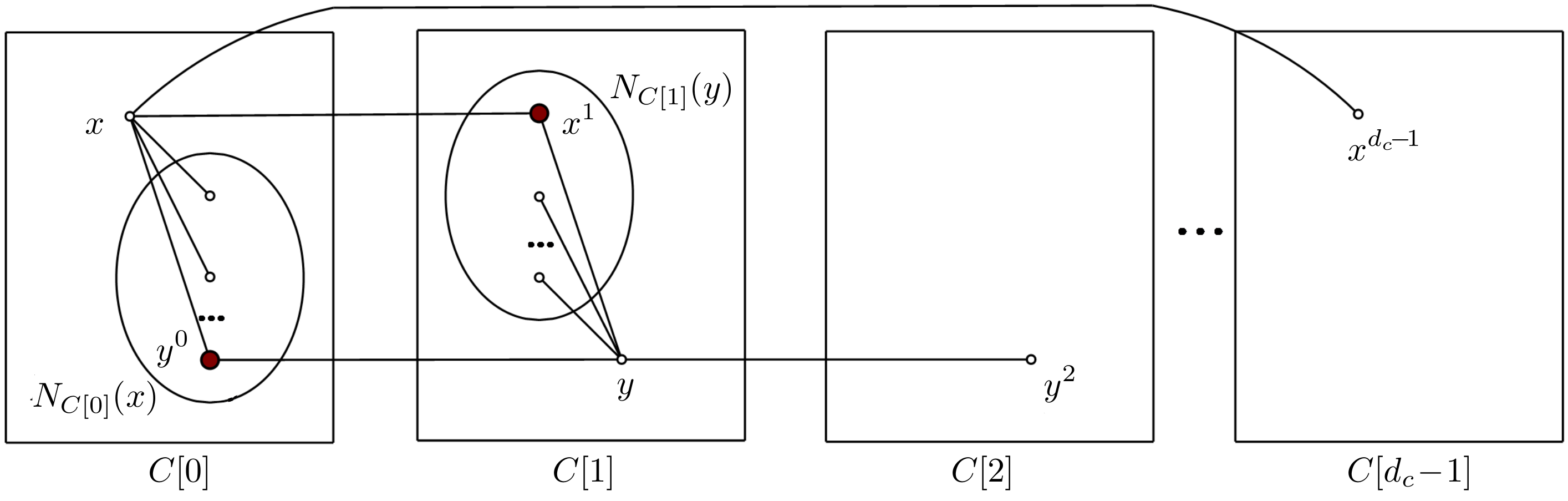}
		\caption{The situation of Case 2.2 with $\vert N(x) \cap N(y) \vert = 2$}
		\label{fig:cn_case2_b}
	\end{subfigure}
	\vspace{5pt}

	\begin{subfigure}{0.85\textwidth}
		\centering
		\includegraphics[width=0.75\textwidth]{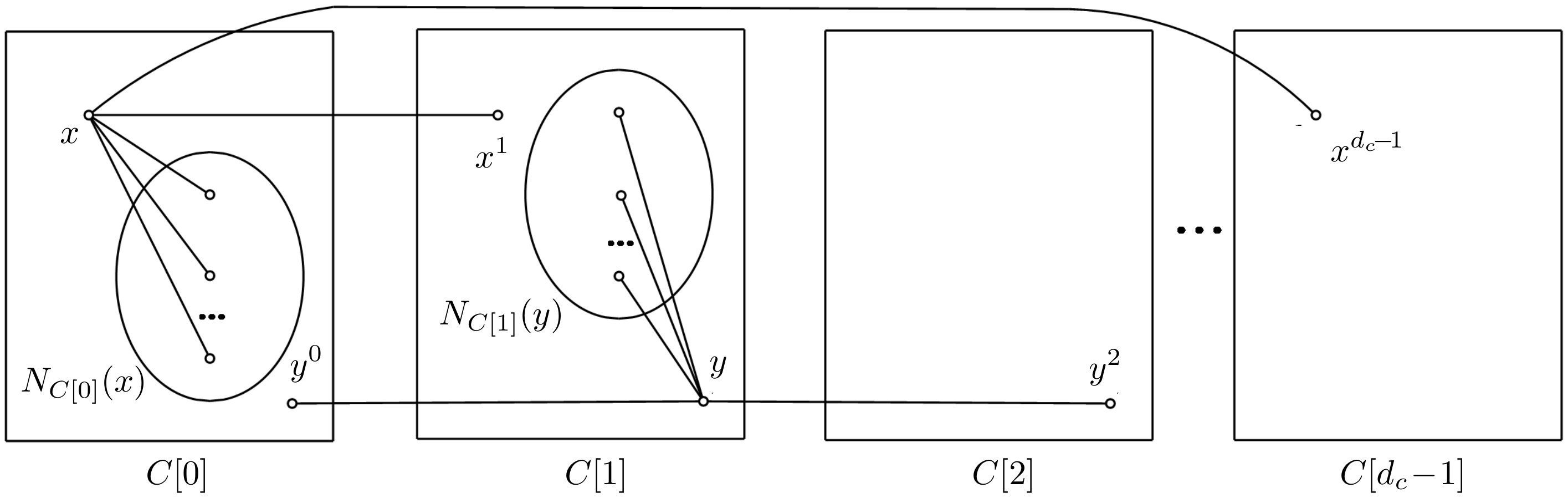}
		\caption{The situation of Case 2.2 with $\vert N(x) \cap N(y) \vert = 0$}
		\label{fig:cn_case2_c}
	\end{subfigure}
	\caption{The illustration of Case 2 in Lemma~\ref{lm:common-neighbor}.}
	\label{fig:cn_case2}
\end{figure}

\begin{lemma} \label{lm:healthy-vertex}
Let $C=C(d_1,d_2,\ldots,d_n)$, where $n \ge 3$ and $d_i \ge 3$ for $1 \le i \le n$, such that it is partitioned into $d_c$ subnetworks along a dimension $c$ where $1 \le c\le n$, and let $U\subset V(C)$ be a set of faulty source vertices. If $|U|=\ell < n$, then $C[i]$ contains at least one healthy vertex for any $0 \le i < d_c$.
\end{lemma}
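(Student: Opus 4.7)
The plan is to fix an arbitrary index $i$ with $0 \le i < d_c$ and show directly that $|V(C[i]) \cap N[U]| < |V(C[i])|$, which produces a healthy vertex in $C[i]$. Writing $U_j = U \cap V(C[j])$, Lemma~\ref{lm:neighbors} lets me classify how each source $u \in U_j$ contributes to $V(C[i]) \cap N[u]$: if $j = i$ then $u$ together with its $2n-2$ neighbors inside $C[i]$ (so $2n-1$ vertices); if $j \equiv i \pm 1 \pmod{d_c}$ then only the unique outer neighbor in $C[i]$ (so $1$ vertex); otherwise none. This yields
\[
|V(C[i]) \cap N[U]| \le (2n-1)|U_i| + |U_{i-1}| + |U_{i+1}|.
\]
Setting $t = |U_i|$ and using $|U_{i-1}| + |U_{i+1}| \le \ell - t \le n - 1 - t$, the bound simplifies to $(2n-2)t + (n-1)$, which is linear in $t$ and maximized at $t = n-1$ with value $(n-1)(2n-1)$. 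Since $|V(C[i])| = \prod_{k \ne c} d_k \ge 3^{n-1}$, the task reduces to comparing $(n-1)(2n-1)$ with $3^{n-1}$.

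A short induction (powered by $3(n-1)(2n-1) > n(2n+1)$ for $n \ge 3$, which rearranges to the obvious $4n^2 - 10n + 3 > 0$) shows $(n-1)(2n-1) < 3^{n-1}$ for all $n \ge 4$, so in that range the crude bound already delivers the strict inequality and the proof is complete. The only remaining situation is $n = 3$, where the general bound evaluates to $10$ while $3^{n-1} = 9$, so the argument is too loose by one. Re-examining the chain of inequalities, this looseness can occur only when $t = 2$, $|U_{i-1}| = |U_{i+1}| = 0$, and additionally $d_k = 3$ for both $k \ne c$; in every other sub-configuration (for example $t \le 1$ gives at most $5 + 1 = 6 < 9$, and a single $d_k \ge 4$ gives $|V(C[i])| \ge 12 > 10$) the crude count already suffices.

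In this tight configuration $C[i] \cong C(3,3)$ and both sources $u_1, u_2$ lie inside $C[i]$; since each $|N_{C[i]}[u_j]| = 5$, it remains to verify $|N_{C[i]}[u_1] \cap N_{C[i]}[u_2]| \ge 2$, which forces $|N_{C[i]}[u_1] \cup N_{C[i]}[u_2]| \le 8 < 9$. If $u_1, u_2$ are adjacent, $\{u_1, u_2\}$ already lies in the intersection. Otherwise, since adjacency in $C(3,3)$ changes exactly one coordinate, $u_1 = ab$ and $u_2 = cd$ with $a \ne c$ and $b \ne d$, and a direct check identifies $ad$ and $cb$ as two distinct common neighbors. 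The main obstacle is precisely this edge case: the union-bound counting is tight up to one for $n = 3$ with every $d_k = 3$, so the small structural observation that two closed neighborhoods in $C(3,3)$ always meet in at least two vertices is exactly what is needed to upgrade the bound from $\le 9$ to $\le 8$ and close the argument.
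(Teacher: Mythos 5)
Your proof is correct and follows essentially the same route as the paper: bound the faulty vertices in $C[i]$ by $(2n-1)\mu_i + (\ell-\mu_i)$, compare against $|V(C[i])| \ge 3^{n-1}$, and isolate the single tight configuration $n=3$, $C[i]\cong C(3,3)$ with both sources inside $C[i]$, which you resolve by showing the two closed neighborhoods overlap. The only (harmless) cosmetic differences are your induction for $(n-1)(2n-1)<3^{n-1}$ when $n\ge 4$ and your use of $\{u_1,u_2\}\subseteq N[u_1]\cap N[u_2]$ in the adjacent subcase, where the paper instead counts the common open neighbor.
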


\begin{proof}
For each $i\in\{0,1,\ldots, d_c-1\}$, let $U_i = U \cap V(C[i])$ and $\mu_i = \vert U_i \vert$. Hereafter, for notational convenience, the indices of the terms $U_i$ and $\mu_i$ are considered modulo $d_c$. Let
\[
F_{i1} = U_i,\ F_{i2} = N_{C[i]}(U_i),\ \text{and}\ F_{i3} = N_{C[i]}(U_{i+1} \cup U_{i-1}).
\]
Also, let $f_{i1}=\vert F_{i1} \vert$, $f_{i2}=\vert F_{i2} \vert$, and $f_{i3}=\vert F_{i3} \vert$ be the cardinalities of these three sets, respectively. Clearly, $F_{i1} \cup F_{i2} \cup F_{i3}$ contains all faulty vertices in $C[i]$.

Firstly, we know that $f_{i1}=\mu_i$. By Lemma \ref{lm:neighbors}, we have
\[
f_{i2} = |N_{C[i]}(U_i)| \le (2n-2)\mu_i
\]
and
\[
f_{i3} = |N_{C[i]}(U_{(i+1)} \cup U_{(i-1)})| \le \mu_{i-1} + \mu_{i+1} \le \ell-\mu_i.
\]
Therefore, the total number of faulty vertices in $C[i]$ is at most
\[
f_{i1}+f_{i2}+f_{i3} \le \mu_i + (2n-2)\mu_i + (\ell-\mu_i) = (2n-2)\mu_i+\ell.
\]
Since $|V(C[i])|=\prod_{j=1,\, j \neq c}^n d_j$, where $d_j \ge 3$, and $\mu_i \le \ell < n$,  the number of healthy vertices in $C[i]$ is at least
\[
\bigg(\prod_{\substack{j\in\{1,2,\ldots,n\}\\ j\ne c}} d_j\bigg) - (2n - 2)\mu_i  - \ell \geq
\begin{cases}
3^{n-1} - (2n - 1)\ell \geq 3^{n-1} - (2n - 1)(n - 1) \geq 6\ \text{when}\ n \geq 4; \\
3^{n-1} - (2n - 1)\ell \geq 4\ \text{when}\ n = 3\ \text{and}\ \ell \leq 1; \\
3^{n-1} - (2n - 2) - 2 = 3\ \text{when}\ \mu_i \leq 1 < 2 = \ell < n = 3.
\end{cases}
\]
This indicates that there is at least one healthy vertex in $C[i]$ in the above three cases. 

It remains to consider the case $\mu_i=\ell=2<n=3$. In this case, we have $f_{i3}=0$ and $|V(C[i])|=\prod_{j=1,\, j \neq c}^n d_j \ge 3^{n-1} = 9$. If $|V(C[i])| \geq 11$, the number of healthy vertices in $C[i]$ is at least
\[
|V(C[i])| - (2n - 2)\mu_i - \ell \geq 11 - (2n - 2)2 - 2 = 1.
\]
If $9 \leq |V(C[i])| \leq 10$, we note that the total number of vertices in  $C[i]$ can only be 9 because $d_j \geq 3$ is an integer for all $1 \leq j \leq n$ and $j \neq c$. Consequently, the possibility of $|V(C[i])| = 10$ is eliminated. Since $f_{i1}=\mu_i=2$, let $F_{i1} = U_i = \{x,y\}$. Given that $|V(C[i])| = 9$, the subnetwork $C[i]$ is isomorphic to $C(3,3)$. It is easy to observe that if $x$ and $y$ are adjacent in $C[i]$, then $|N(x) \cap N(y)| = 1$. In contrast, if $x$ and $y$ are not adjacent, then $|N(x) \cap N(y)| = 2$. By Lemma~\ref{lm:neighbors}, the former case indicates that $f_{i2} = |N_{C[i]}(U_i)| = |N_{C[i]}(x)|+|N_{C[i]}(y)|-|N(x) \cap N(y)|-|U_i| = 2(2n-2)-1-2=5$, and the latter case implies $f_{i2}= |N_{C[i]}(x)|+|N_{C[i]}(y)|-|N(x) \cap N(y)| = 2(2n-2)-2=6$. Thus, $f_{i2} \leq \max\{5, 6\} = 6$. In summary, the number of faulty vertices in $C[i]$ is at most $f_{i1} + f_{i2} + f_{i3} \leq 2 + 6 + 0 = 8$. It follows that the number of healthy vertices in $C[i]$ is at least $|V(C[i])|-8=9-8=1$.
\end{proof}

\medskip
\begin{lemma} \label{lm:healthy-pairs}
Let $C=C(d_1,d_2,\ldots,d_n)$, where $n \ge 3$ and $d_k \ge 3$ for $1 \le k \le n$, and let $U\subset V(C)$ be a set of faulty source vertices with $|U|=\ell<n$. If $C[i]$ and $C[j]$ are two adjacent subnetworks in $C$ such that $\mu_i = |U \cap V(C[i])|$ and $\mu_j = |U \cap V(C[j])|$, then
\[ 
| \{ v \in V(C[i]) \colon\, \text{both}\ v\ \text{and}\ v^j\ \text{are healthy vertices} \} | > h, 
\]
where $h = 2n - 2 - \ell - \mu_i - \mu_j$. Specifically, if $x\in V(C[i])$ is a healthy vertex, then at least $h$ of these vertices are adjacent to $x$ in $C[i]$.
\end{lemma}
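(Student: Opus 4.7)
The plan is to prove the \emph{specifically} clause first by attributing each bad neighbor of a healthy $x$ to a source, and then derive the main cardinality bound $|W| > h$ (where $W$ denotes the set in the statement) by bootstrapping the specifically clause. Fix a healthy vertex $x \in V(C[i])$ and call a neighbor $v \in N_{C[i]}(x)$ \emph{bad} if $v \in N_C[U]$ or $v^j \in N_C[U]$. The strategy is to bound, for each $u \in U$ separately, the number of bad $v \in N_{C[i]}(x)$ that can be attributed to $u$.

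For $u \in U_i$, since $x$ is healthy we have $u \neq x$ and $u \not\sim x$, and the bad neighbors due to $u$ are exactly the common neighbors of $x$ and $u$ in $C[i]$, giving at most $2$ by Lemma \ref{lm:common-neighbor}. For $u \in U_j$ (necessarily $u \ne x^j$, else $x$ would be faulty), the bad neighbors are $\{u^i\}$ (contributing $1$ iff $u \sim x^j$ in $C[j]$) together with $\{w^i : w \in N_{C[j]}(u) \cap N_{C[j]}(x^j)\}$; a case split on whether $u \sim x^j$, combined with Lemma \ref{lm:common-neighbor} in $C[j]$, shows the combined count is at most $2$. For $u$ in a subnetwork $C[k]$ adjacent to $C[i]$ with $k \ne j$, or $C[k']$ adjacent to $C[j]$ with $k' \ne i$, the only bad neighbor is $v = u^i$, so at most $1$. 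Summing, and using that sources outside $C[i] \cup C[j]$ contribute a total of at most $\ell - \mu_i - \mu_j$ (whether $d_c = 3$, where $C[k] = C[k']$, or $d_c \geq 4$, where they are disjoint), the bad count is at most $2(\mu_i + \mu_j) + (\ell - \mu_i - \mu_j) = \mu_i + \mu_j + \ell$. Subtracting from $|N_{C[i]}(x)| = 2n - 2$ leaves at least $h$ good neighbors of $x$, proving the specifically clause.

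For the main inequality, by Lemma \ref{lm:healthy-vertex} choose a healthy $x \in V(C[i])$; the specifically clause gives $|W \cap N_{C[i]}(x)| \geq h$. If $h \geq 1$, any $y \in W \cap N_{C[i]}(x)$ is itself healthy with $y^j$ healthy, and re-applying the specifically clause to $y$ produces at least $h$ good neighbors of $y$ in $C[i]$, each distinct from $y$; hence $|W| \geq h + 1$. If $h = 0$, the constraints $\mu_i + \mu_j + \ell = 2n - 2$ and $\mu_i + \mu_j \leq \ell < n$ force $\ell = n - 1$, $\mu_i + \mu_j = n - 1$, and all sources in $C[i] \cup C[j]$: for $n \geq 4$ the direct estimate $|V(C[i])| - f_i - f_j \geq 3^{n-1} - 2n(n-1) > 0$ is immediate, while for $n = 3$ a short case analysis on $(\mu_i, \mu_j) \in \{(2,0), (1,1), (0,2)\}$ using Lemma \ref{lm:common-neighbor} (and, when $|V(C[i])| = 9$, the fact that non-adjacent distinct vertices of $C(3,3)$ have exactly $2$ common neighbors) yields $|W| \geq 1$.

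The main obstacle is the attribution for $u \in U_j$, where $v$ and $v^j$ can be hit by $u$ through two different mechanisms and a careful sub-case split on the adjacency of $u$ and $x^j$ in $C[j]$ is what forces the combined count down to $2$ rather than $3$. A secondary difficulty is the $h = 0$, $n = 3$ edge case with $|V(C[i])| = 9$, where the direct union bound fails by a small margin and the precise common-neighbor structure of the sub-mesh $C(3,3)$ must be invoked to finish.
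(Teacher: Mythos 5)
Your proof is correct, and while the first half mirrors the paper, the second half takes a genuinely different route. For the \emph{specifically} clause you do essentially what the paper does, just with per-source bookkeeping: the paper bounds $|N(x)\cap N[U_i]|\le 2\mu_i$ and $|N(x^j)\cap N[U_j]|\le 2\mu_j$ as blocks (with a case split on $p=|N(x^j)\cap U_j|$), whereas you attribute at most $2$ bad neighbors to each source in $U_i\cup U_j$ and at most $1$ to each source in a neighboring subnetwork; both hinge on Lemma~\ref{lm:common-neighbor} and the adjacency split on $u$ versus $x^j$, and yield the same count $2n-2-(\ell+\mu_i+\mu_j)=h$. The real divergence is in the strict inequality $|H|>h$. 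The paper sets $F=V(C[i])\setminus H$, proves the global estimate $|F|+h\le(2n-3)(\mu_i+\mu_j+1)+1$, and compares with $|V(C[i])|\ge 3^{n-1}$, which forces a five-subcase figure-based analysis for $n=3$, $|V(C[i])|=9$, $\mu_i+\mu_j=2$ regardless of the value of $h$. You instead bootstrap: when $h\ge 1$, any $y\in H\cap N_{C[i]}(x)$ is itself healthy, so re-applying the fan bound to $y$ yields $h$ further elements of $H$ distinct from $y$, giving $|H|\ge h+1$ with no counting at all; the only residual case is $h=0$, which your constraint chain correctly pins to $\ell=\mu_i+\mu_j=n-1$, disposed of by a direct count $3^{n-1}>2n(n-1)$ for $n\ge 4$ and a small $(\mu_i,\mu_j)$ case analysis for $n=3$. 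This is cleaner and confines the delicate $C(3,3)$ common-neighbor analysis to a strictly smaller set of configurations than the paper needs. The one place you compress details is that $n=3$, $h=0$ analysis — in the $(1,1)$ subcase the naive union bound gives $10>9$ and you must use the overlap $N_{C[i]}[s_1]\cap N_{C[i]}[s_2^i]$ (of size at least $2$ by the exact common-neighbor count in $C(3,3)$) to get down to $8$ — but the facts you invoke are exactly the right ones and the verification goes through.
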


\begin{proof}
Let $U_i = U \cap V(C[i])$ and $U_j = U \cap V(C[j])$. Then, $\mu_i = |U_i|$ and $\mu_j = |U_j|$. Furthermore, let 
\[
H = \{ v \in V(C[i]) \colon\, v \notin N[U] \text{ and } v^j \notin N[U] \}. 
\]

By Lemma~\ref{lm:healthy-vertex}, let $x\in V(C[i])$ be a healthy vertex. We assert that $H$ contains at least $h = 2n - 2 - \ell - \mu_i - \mu_j$ healthy neighbors of $x$ in $C[i]$. By Lemma~\ref{lm:neighbors}, $x$ has $2n - 2$ neighbors in $C[i]$. Since $x$ is a healthy vertex, we have $N(x) \cap U_i = \varnothing$. By Lemma~\ref{lm:common-neighbor}, $|N(x) \cap N(U_i)| \le 2\mu_i$. Thus, $|N(x) \cap N[U_i]| = |N(x) \cap U_i|+|N(x) \cap N(U_i)| \le 2\mu_i$, which follows that at least $(2n - 2) - 2\mu_i$ neighbors of $x$ do not belong to $N_{C[i]}[U_i]$.

If $x^j$ is a healthy vertex, a similar argument as above demonstrates that $|N(x^j) \cap N[U_j]| \le 2\mu_j$. In contrast, if $x^j$ is not a healthy vertex, then $x^j \notin U_j \cup N(U_i)$ since $x$ is a healthy vertex. This further implies that $x^j \in N(U_j)$ or $x^j \in N(U \setminus (U_i \cup U_j))$. If $x^j \notin N(U_j)$, then $N(x^j) \cap U_j = \varnothing$. By Lemma \ref{lm:common-neighbor}, $|N(x^j) \cap N(U_j)| \le 2\mu_j$. Thus, $|N(x^j) \cap N[U_j]| = |N(x^j) \cap U_j|+|N(x^j) \cap N(U_j)| \le 2\mu_j$. If $x^j \in N(U_j)$, let $p=|N(x^j) \cap U_j|$, where $1 \le p \le \mu_j$. Since $x^j$ has $p$ neighbors in $U_j$, it follows that there are $\mu_j - p$ faulty source vertices of $U_j$ that are not neighbors of $x^j$. By Lemma \ref{lm:common-neighbor}, these $p$ neighbors of $x^j$ have at most $p$ common neighbors with $x^j$, and those $\mu_j - p$ faulty source vertices can share at most $2(\mu_j - p)$ common neighbors with $x^j$. Thus,
\[
|N(x^j) \cap N(U_j)| \le p+2(\mu_j-p) = 2\mu_j-p.
\]
As a consequence, we have
\[
|N(x^j) \cap N[U_j]| = |N(x^j) \cap U_j| + |N(x^j) \cap N(U_j)| \le p + (2\mu_j-p) = 2\mu_j.
\]
This situation is illustrated in Fig.~\ref{fig:healthy_vertex_pairs-1}. In summary, whether $x^j$ is a healthy vertex or not, we have
\[
|N(x^j) \cap N[U_j]| \le 2\mu_j.
\]

\begin{figure}[ht]
	\centering
	\includegraphics[width=0.42\textwidth]{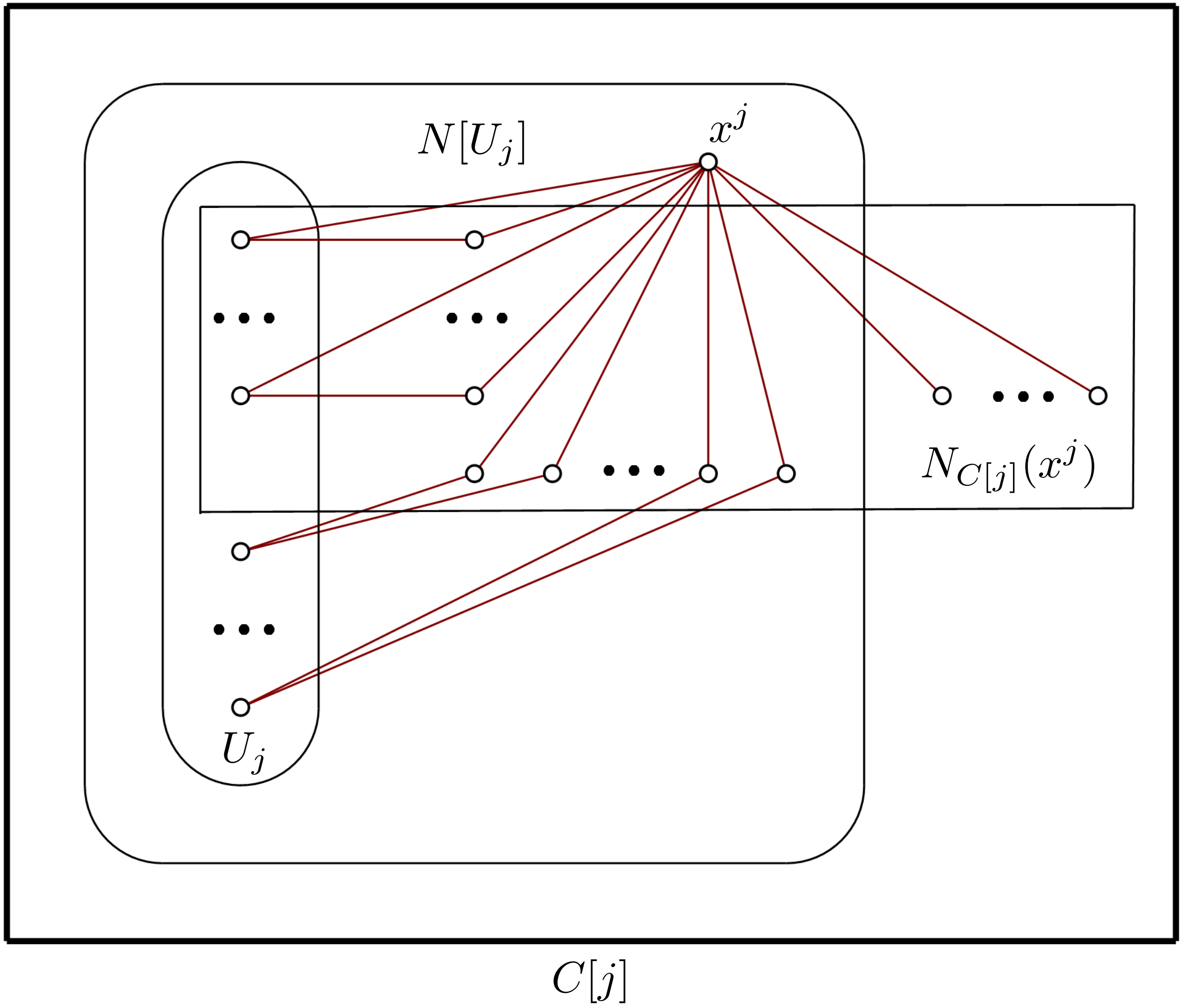}
	\caption{The illustration for $x^j \in N(U_j)$ in Lemma~\ref{lm:healthy-pairs}.}
	\label{fig:healthy_vertex_pairs-1}
\end{figure}

Based on the above discussion, $x$ has at least $(2n - 2) - 2\mu_i - 2\mu_j$ neighbors in $C[i]$, denoted as $x_1, x_2, \ldots, x_{2(n-\mu_i-\mu_j-1)}$, that satisfy the following condition:
\[
x_k \notin N[U_i]\ \text{and}\ x_k^j \notin N[U_j]\ \text{for}\ k\in\{1,2,\ldots,2(n-\mu_i-\mu_j-1)\}.
\]
If $x_k \in N[U_j]$, it implies $x_k^j \in U_j$, contradicting $x_k^j \notin N[U_j]$. Similarly, if $x_k^j \in N[U_i]$, it implies $x_k \in U_i$, contradicting $x_k \notin N[U_i]$. Hence, we have 
\[
x_k \notin N[U_j]\ \text{and}\ x_k^j \notin N[U_i]\ \text{for}\ k\in\{1,2,\ldots,2(n-\mu_i-\mu_j-1)\}.
\]
Thus, we can conclude that
\[
x_k , x_k^j \notin N[U_i] \cup N[U_j]\ \text{for}\ k\in\{1,2,\ldots,2(n-\mu_i-\mu_j-1)\}.
\]

Finally, we need to confirm whether $x_k$ and $x_k^j$ belong to $N(U \setminus (U_i \cup U_j))$. If $x_k \in N(U \setminus (U_i \cup U_j))$, it reflects the presence of a faulty source neighbor of $x_k$ in a subnetwork adjacent to $C[i]$ and different from $C[j]$. Similarly, if $x_k^j \in N(U \setminus (U_i \cup U_j))$, it indicates that there is a faulty source neighbor of $x_k^j$ in a subnetwork adjacent to $C[j]$, which differs from $C[i]$. Clearly, the number of faulty source vertices in these two subnetworks cannot exceed $\ell - \mu_i - \mu_j$. Consequently, we can conclude that $x$ has at least $(2n - 2) - 2\mu_i - 2\mu_j - (\ell - \mu_i - \mu_j) = 2n - 2 - \ell - \mu_i - \mu_j = h$ healthy neighbors in $C[i]$. These neighbors are denoted as $x_1^{\prime}, x_2^{\prime}, \ldots, x_h^{\prime}$, and they satisfy the following condition:
\[
x_k^{\prime}, (x_k^{\prime})^j \notin N[U]\ \text{for}\ k \in \{1,2,\ldots,h\}.
\]
Thus, it is confirmed that $H$ indeed contains at least $h$ healthy neighbors of $x$ in $C[i]$.

\begin{figure}[H]
	\centering
	\begin{subfigure}{0.48\textwidth}
		\centering
		\includegraphics[width=0.9\textwidth]{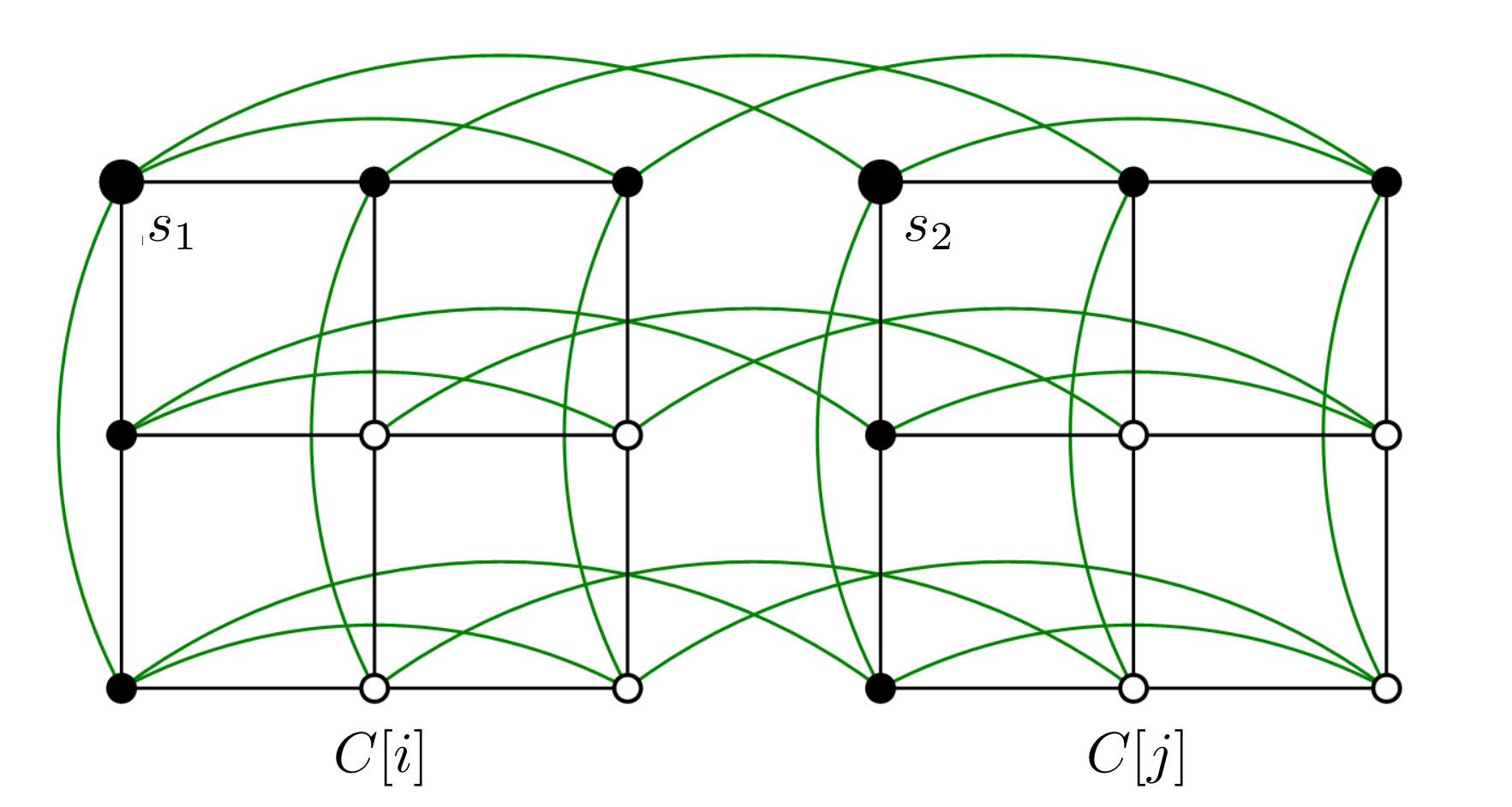}
		\caption{$s_1\in V(C[i])$ and $s_2\in V(C[j])$ are adjacent.}
		\label{fig:healthy_vertex_pairs-2a}
	\end{subfigure}
	\begin{subfigure}{0.48\textwidth}
		\centering
		\includegraphics[width=0.9\textwidth]{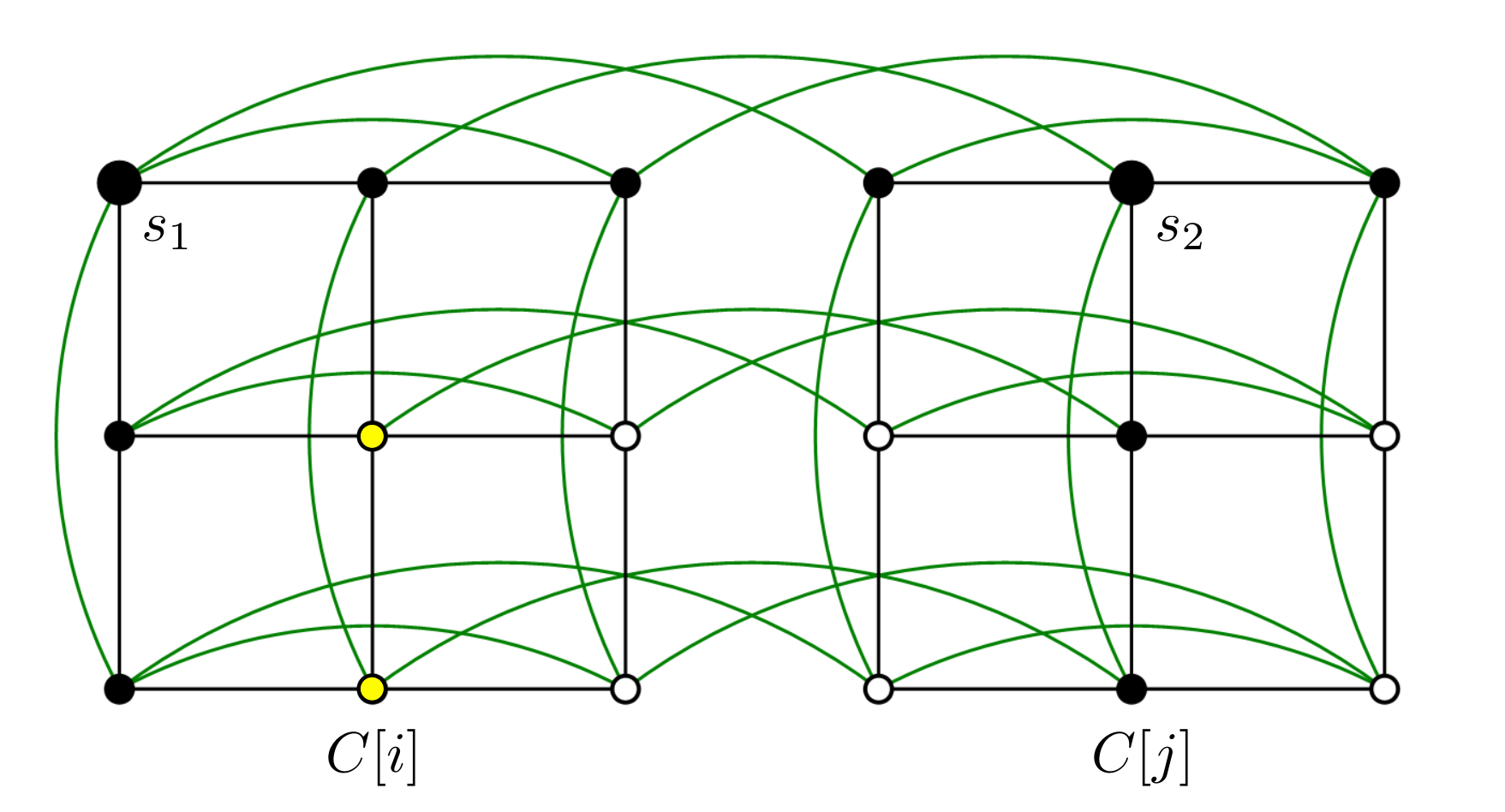}
		\caption{$s_1\in V(C[i])$ and $s_2\in V(C[j])$ are not adjacent.}
		\label{fig:healthy_vertex_pairs-2b}
	\end{subfigure}
	\vspace{15pt}

	\begin{subfigure}{0.48\textwidth}
		\centering
		\includegraphics[width=1.00\textwidth]{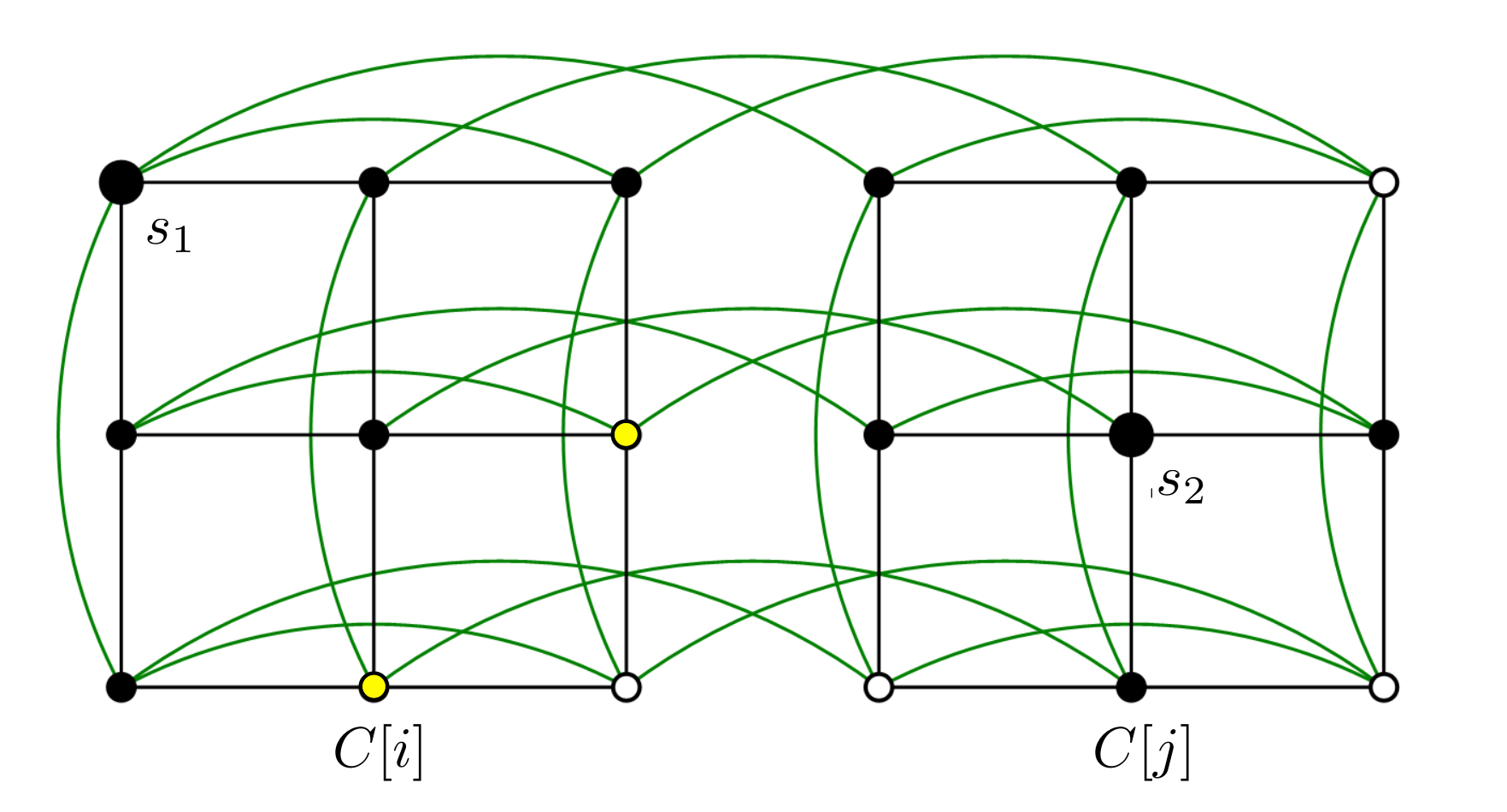}
		\caption{$s_1\in V(C[i])$ and $s_2\in V(C[j])$ are not adjacent.}
		\label{fig:healthy_vertex_pairs-2c}
	\end{subfigure}
	\begin{subfigure}{0.48\textwidth}
		\centering
		\includegraphics[width=1.00\textwidth]{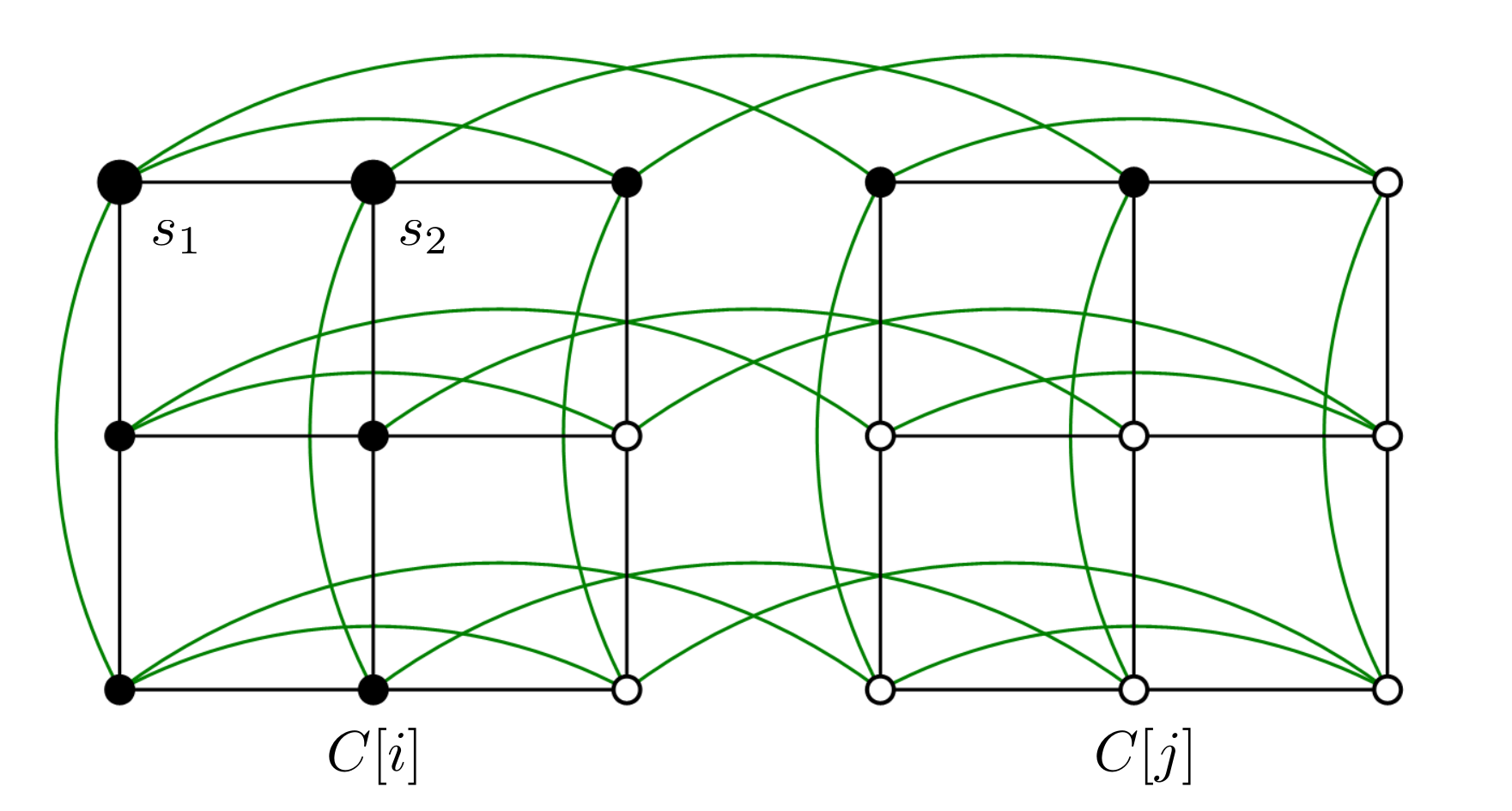}
		\caption{$s_1,s_2\in V(C[i])$ are adjacent.}
		\label{fig:healthy_vertex_pairs-2d}
	\end{subfigure}
	\vspace{15pt}

	\begin{subfigure}{0.48\textwidth}
		\centering
		\includegraphics[width=1.00\textwidth]{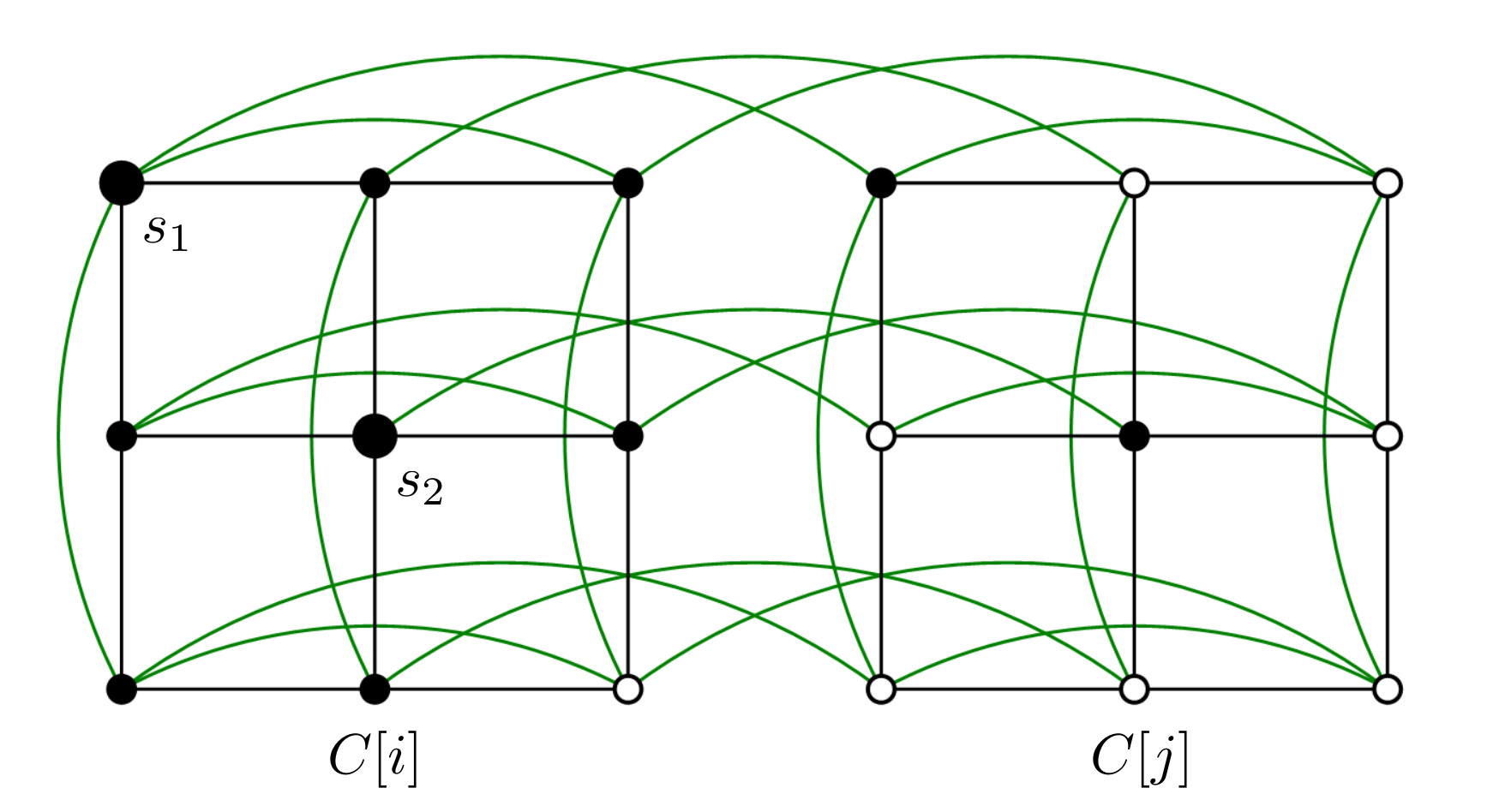}
		\caption{$s_1,s_2\in V(C[i])$ are not adjacent.}
		\label{fig:healthy_vertex_pairs-2e}
	\end{subfigure}
	\caption{The illustration for $n=3$, $|V(C[i])|=9$, and $\mu_i+\mu_j=2$ in Lemma~\ref{lm:healthy-pairs}, showing black nodes as faulty vertices, white nodes as healthy vertices, and yellow nodes as healthy vertices in $C[i]$ with corresponding outer neighbors being faulty in $C[j]$.}
	\label{fig:healthy_vertex_pairs-2}
\end{figure}

Below, we prove that the number of healthy vertices in $H$ is strictly greater than $h$ (i.e., $|H| > h$). Let $F = V(C[i]) \setminus H$. Since $|F| + |H| = |V(C[i])|$, it suffices to prove that $|F| + h < |V(C[i])|$. By the definition of $F$, for any vertex $y \in F$, at least one of $y$ and $y^j$ must be a faulty vertex. Thus, we can establish an upper bound of $|F|$ as follows:
\[
|F| \leq \ell + (\mu_i + \mu_j)(2n - 2).
\]
Since $\mu_i + \mu_j + 1 \leq \ell + 1 \leq n$, it implies that 

\begin{equation*}
\begin{split}
|F| + h &\leq \ell + (\mu_i + \mu_j)(2n - 2) + (2n - 2 - \ell - \mu_i - \mu_j)\\
&= (2n - 2)(\mu_i + \mu_j + 1) - (\mu_i + \mu_j + 1) + 1\\
&= (2n - 3)(\mu_i + \mu_j + 1) + 1\\
&\leq (2n - 3)n + 1.
\end{split}
\end{equation*}
Recall that the total number of vertices in $C[i]$ is $|V(C[i])| = \prod_{k=1,\, k \neq c}^n d_k \geq 3^{n-1}$. If $n \geq 4$, then $|F| + h \leq (2n - 3)n + 1 < 3^{n-1} \leq |V(C[i])|$, as desired. For $n = 3$, we have $\mu_i + \mu_j \leq \ell \leq 2$ and $|V(C[i])| \geq 3^{n-1} = 9$. If $|V(C[i])| \geq 11$, then $|F| + h \leq (2n - 3)n + 1 = 10 < 11 \leq |V(C[i])|$. We now consider $|V(C[i])|=9$ and omit the possibility of $|V(C[i])|=10$ because $d_k \geq 3$ is an integer for all $1 \leq k \leq n$ and $k \neq c$. In this case, $C[i]$ is isomorphic to $C(3,3)$. If $\mu_i + \mu_j \leq 1$, then $|F| + h \leq (2n - 3)(\mu_i + \mu_j + 1) + 1 \leq 2(2n - 3) + 1 = 7 < 9 = |V(C[i])|$.

If $\mu_i + \mu_j = 2$, we need to re-estimate $|F|$ to obtain a more accurate result. Without loss of generality, assume that $s_1$ and $s_2$ are the two faulty source vertices. Consider the following scenarios:

If $s_1\in V(C[i])$ and $s_2\in V(C[j])$ are adjacent, then $|F| = 5$ (see Fig.~\ref{fig:healthy_vertex_pairs-2}(a)). 

If $s_1\in V(C[i])$ and $s_2\in V(C[j])$ are not adjacent, then $|F| \leq \max\{7,8\} = 8$ (see Fig.~\ref{fig:healthy_vertex_pairs-2}(b) and \ref{fig:healthy_vertex_pairs-2}(c)).

If both $s_1$ and $s_2$ in $C[i]$ or in $C[j]$ are adjacent, then $|F| = 7$, (see Fig.~\ref{fig:healthy_vertex_pairs-2}(d)).

If both $s_1$ and $s_2$ in $C[i]$ or in $C[j]$ are not adjacent, then $|F| = 8$, (see Fig.~\ref{fig:healthy_vertex_pairs-2}(e)). 

In summary, since we have $|F| \leq 8$ when $n=3$ and $\mu_i + \mu_j=\ell=2$, it follows that $|F| + h \leq 8 + 2n - 2 - \ell - \mu_i - \mu_j = 8 + 6 - 2 - 2 - 2 = 8 < 9 = |V(C[i])|$.
\end{proof}

\medskip
\begin{lemma} \label{lma:IDP}
Let $C=C(d_1,d_2,\ldots,d_n)$, where $n \ge 3$ and $d_k \ge 3$ for $1 \le k \le n$, and let $U\subset V(C)$ be a set of faulty source vertices with $|U|=\ell<n$. Let $C[i]$ and $C[j]$ be two adjacent subnetworks in $C$ with $\kappa(C[i] \ominus U_i) \geq 2n - 2 - 2\mu_i$ and $\kappa(C[j] \ominus U_j) \geq 2n - 2 - 2\mu_j$, where $U_i = U \cap V(C[i])$, $\mu_i = |U_i|$, $U_j = U \cap V(C[j])$, and $\mu_j = |U_j|$. If $x \in V(C[i])$ and $y \in V(C[j])$ are two healthy vertices, then there exist at least $m$ internally disjoint $(x, y)$-paths passing through only healthy vertices in $C[i]$ and $C[j]$, where
\[
m = \begin{cases} 
2n - 2\ell     & \text{if}\ \mu_i + \mu_j < \ell; \\
2n - 2\ell - 1 & \text{if}\ \mu_i + \mu_j = \ell. 
\end{cases}
\]
\end{lemma}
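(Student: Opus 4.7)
The plan is to route each of the $m$ paths as $x\to v\to v^j\to\cdots\to y$, where $v$ is a neighbor of $x$ in $C[i]$ with both $v$ and its outer neighbor $v^j$ healthy, and the final segment from $v^j$ to $y$ lies in the healthy subgraph $H_j$ of $C[j]$. Applying Lemma~\ref{lm:healthy-pairs} to $x$ furnishes a set $X_1\subseteq N_{C[i]}(x)$ of at least $h:=2n-2-\ell-\mu_i-\mu_j$ such crossings, together with the corresponding healthy landing set $X_2=\{v^j:v\in X_1\}\subseteq V(H_j)$.

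To realize the final segments simultaneously and disjointly, I apply Lemma~\ref{lm:disjoint-paths}(2) inside $C[j]\ominus U_j$ (of connectivity $\ge 2n-2-2\mu_j$), with fault set $F$ equal to the additional faulty vertices of $C[j]$ outside $N_{C[j]}[U_j]$; since these are all outer neighbors of vertices in $U\setminus U_j$, we have $|F|\le\ell-\mu_j$. The size requirement $|X_2|+|F|\le 2n-2-2\mu_j$ reduces to $\mu_i\ge 0$, which is automatic, so a $(y,X_2)$-fan of size $h$ exists in the healthy portion of $C[j]$. Prepending the crossing edge $v^jv$ and the edge $vx$ to each fan-path produces $h$ internally disjoint $(x,y)$-paths through healthy vertices of $C[i]\cup C[j]$. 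A direct arithmetic check gives $m-h\le 1$, with $m-h=1$ only in the boundary regimes $\mu_i+\mu_j\in\{\ell-1,\ell\}$; outside these regimes the construction above already yields $m$ paths.

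To supply the remaining path in those boundary regimes I split into subcases. When $\mu_i\ge 1$ and $x^j$ is healthy, enlarging the target set to $X_2\cup\{x^j\}$ is still within budget (the size condition $(h+1)+|F|\le 2n-2-2\mu_j$ simplifies to $\mu_i\ge 1$), and the enlarged fan supplies the extra path $x\to x^j\to\cdots\to y$; an analogous device handles the symmetric case $\mu_j\ge 1$ with $y^i$ healthy. The genuinely delicate corner is $\mu_i=0$ (and symmetrically $\mu_j=0$), where $\kappa(H_j)$ meets $h$ exactly so the fan cannot be enlarged inside $H_j$ alone; here I exploit the fact that $C[i]$ is fault-free with its full connectivity $2n-2$, pick an unused healthy pair $(w,w^j)$ guaranteed by the strict inequality $|H|>h$ in Lemma~\ref{lm:healthy-pairs}, build an $(x,w)$-path in $C[i]$ avoiding $X_1$ via Lemma~\ref{lm:disjoint-paths}(1), cross to $w^j$, and reach $y$ by rerouting inside $H_j$ using the slack at the $y$-end of the already-constructed fan. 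The main obstacle I foresee is this last $(w^j,y)$-segment: it must be internally disjoint from the $h$ main fan-paths while respecting the tight connectivity budget in $H_j$, which will require locating the relevant fault sources via Lemma~\ref{lm:common-neighbor} and carefully handling the sub-sub-cases $w\in X_1$ or $w^j\in X_2$ by substituting another element from the strict surplus $|H|-h>0$ of healthy pair vertices.
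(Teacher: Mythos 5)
Your main fan construction is essentially the paper's: use Lemma~\ref{lm:healthy-pairs} to get $h=2n-2-\ell-\mu_i-\mu_j$ healthy crossing pairs at neighbors of $x$, build a fan to $y$ inside $C[j]$, and check $m-h\le 1$. That part, including the budget computation $|X_2|+|F|\le 2n-2-\mu_i-2\mu_j\le\kappa(C[j]\ominus U_j)$, is correct. The problems are in how you supply the one extra path in the boundary regimes. First, your subcases do not cover all situations: you handle ($\mu_i\ge 1$ and $x^j$ healthy), ($\mu_j\ge 1$ and $y^i$ healthy), and ($\mu_i=0$ or $\mu_j=0$), but the combination $\mu_i\ge 1$, $\mu_j\ge 1$ with \emph{both} $x^j$ and $y^i$ faulty is possible and is omitted (e.g.\ $n=3$, $\ell=2$, $\mu_i=\mu_j=1$, where $h=0$ and the ``extra'' path is the only path required). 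The paper closes this by always routing through a vertex $z\notin X$ with $z,z^j$ healthy (guaranteed by the strict inequality $|H|>h$), and the nontrivial content is showing an $(x,z)$-path exists in $C[i]$ avoiding $F_i\cup X$; that connectivity count only works because the faultiness of $x^j$ forces $\mu_j\ge1$ (or $\mu_j+\mu_{j+1}\ge 1$ when $d_c\ge4$), which buys exactly the needed slack. You would need this argument, not just the $x^j$-healthy shortcut.

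Second, the ``delicate corner'' you flag is genuinely the hard part and you leave it unresolved. Note also that $\mu_i=0$ does \emph{not} make $C[i]$ fault-free: it only means $C[i]$ has no faulty sources, and it can still contain up to $\ell$ faulty vertices (outer neighbors of sources in adjacent subnetworks), so the ``full connectivity $2n-2$'' must be spent against both $X_1$ and $F_i$. In the tightest instance $\ell=1$, $\mu_i=\mu_j=0$, one needs $m=2n-2$ paths with $h=2n-3$, and the extra path cannot in general be obtained by enlarging the fan in $C[j]$ because $|F_j|$ may be nonzero; the paper resolves this with a case split on $d_c\ge 4$ versus $d_c=3$, and for $d_c=3$ a further split on whether $y=x^j$, $y\in Y$, or neither, each with its own explicit rerouting. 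Your sketch names the obstacle but does not give an argument that overcomes it, so as written the proof is incomplete precisely where the lemma is tight.
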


\begin{proof}
Suppose that $C(d_1,d_2,\ldots,d_n)$ is partitioned into $d_c$ subnetworks $C[0], C[1], \ldots, C[d_c-1]$ along a dimension $c$, where $1 \le c\le n$. Without loss of generality, let $x \in V(C[0])$ and $y \in V(C[1])$, and assume that $\mu_0 \geq \mu_1$. By Lemma \ref{lm:healthy-pairs}, there exist $h = 2n-2-\ell-\mu_0-\mu_1$ neighbors $x_1, x_2, \ldots, x_h$ of $x$ in $C[0]$ such that
\[
x_k, x_k^1 \notin N[U]\ \text{for}\ k \in \{1,2,\ldots,h\}.
\]
Let $X = \{x_1, x_2, \ldots, x_h\}$, $Y = \{x_1^1, x_2^1, \ldots, x_h^1\}$, and $F = \{v \in V(C)\colon\, v\ \text{is a faulty vertex}\}$. For each $k\in\{0,1,\ldots, d_c-1\}$, let $F_k = F \cap V(C[k])$, $U_k = U \cap V(C[k])$, and $\mu_k = \vert U_k \vert$. Consider the following three cases.

\textbf{Case 1:} $\mu_0 = \ell = n - 1$.

In this case, $\mu_k = 0$ for $k = 1,2, \ldots , d_c - 1$. Since $\mu_0 + \mu_1 = \ell$, we need to find $m = 2n - 2\ell - 1 = 1$ path connecting $x$ and $y$. Since all faulty source vertices are located in $C[0]$ and $x$ is a healthy vertex, it follows that $x^1$ is also healthy in $C[1]$ and $|F_1|=\mu_0=n-1$. By the assumption $\kappa(C[1])= \kappa(C[1] \ominus U_1) \ge 2n - 2 - 2\mu_1 = 2n - 2 > n >|F_1|$ for $n \ge 3$ and Lemma~\ref{lm:disjoint-paths}, there exists an $(x^1, y)$-path $P_{x^1y}$ in $C[1]-F_1$. Let $P = \langle x, P_{x^1y} \rangle$, which is the required path passing through only healthy vertices in $C[0]$ and $C[1]$ that connects $x$ and $y$.

\textbf{Case 2:} $1 \le \mu_0 \le n-2$.

By Lemma~\ref{lm:healthy-pairs}, $C[0]$ contains a vertex $z$ that is not in $X$ and is healthy along with its outer neighbor $z^1$. We claim that there exists an $(x, z)$-path $P_{xz}$ in $C[0]-F_0$ that does not pass through any vertex of $X$. If $x^1$ is a healthy vertex, we can set $z = x$, making the path $P_{xz} = P_{xx} = \langle x \rangle$. In contrast, if $x^1$ is not a healthy vertex, we observe that $x^1 \notin U$ and $N(x^1) \cap U \neq \varnothing$. We will consider two scenarios based on this observation.

\textbf{Case 2.1:} $d_c = 3$.

In this case, the fault of $x^1$ is not caused by its outer neighbor $x^2$ since $x$ is healthy. This indicates that the fault of $x^1$ is due to a faulty source vertex in $C[1]$. Thus, $\mu_1 \geq 1$. Note that $\mu_0+\mu_1+\mu_2=\ell$, and we also have $|F_0|-|N_{C[0]}[U_0]|\le \mu_1+\mu_2$, where the term on the left represents the number of faulty vertices in $C[0] \ominus U_0$. Given the assumption of the connectivity of $C[0] \ominus U_0$, we proceed with the analysis
\begin{equation*}
\begin{split}
\kappa(C[0] \ominus U_0) 
&\ge 2n - 2 - 2\mu_0\\
&\ge 2n - 2 - 2\mu_0 - \mu_1 + 1\\
&= (2n - 2 - \ell - \mu_0 - \mu_1) + \mu_1 + \mu_2 +1\\
&= h + \mu_1 + \mu_2 + 1 \\
&> |X| + (\mu_1 + \mu_2).
\end{split}
\end{equation*}
Hence, by Lemma \ref{lm:disjoint-paths}, there exists an $(x, z)$-path $P_{xz}$ in $C[0]-(F_0 \cup X)$.

\textbf{Case 2.2:} $ d_c \geq 4 $.

The fault of $x^1$ is caused by faulty source vertices in $C[1]$ or $C[2]$. Thus, $\mu_1 + \mu_2 \geq 1$. Note that $\ell=\sum_{k=0}^{d_c-1} \mu_k$ and $|F_0|-|N_{C[0]}[U_0]|\le \mu_1+\mu_{d_c-1}$. Given that $\sum_{k=1}^{d_c-2} u_k \geq u_1 + u_2 \geq 1$, and based on the assumption of the connectivity of $C[0] \ominus U_0$, we have
\begin{equation*}
\begin{split}
\kappa(C[0] \ominus U_0) 
&\ge 2n - 2 - 2\mu_0\\
&\ge 2n - 2 - 2\mu_0 + 1 - \sum_{k=1}^{d_c-2} \mu_k\\
&= (2n - 2 - \ell - \mu_0 - \mu_1) + \mu_1 + \mu_{d_c-1} +1\\
&= h + \mu_1 + \mu_{d_c-1} + 1 \\
&> |X| + (\mu_1 + \mu_{d_c-1}).
\end{split}
\end{equation*}
Hence, by Lemma \ref{lm:disjoint-paths}, there exists an $(x, z)$-path $P_{xz}$ in $C[0]-(F_0 \cup X)$.

From the above, we demonstrate that there exists an $(x, z)$-path $P_{xz}$ in $C[0]-(F_0 \cup X)$. Let $Y' = Y \cup \{z^1\}$, where all vertices in $Y'$ are healthy. Note that $\ell=\sum_{k=0}^{d_c-1} \mu_k$ and $|F_1|-|N_{C[1]}[U_1]|\le \mu_0+\mu_2$. Since $\mu_0 \geq 1$ and $\sum_{k=3}^{d_c-1} \mu_k \geq 0$, by the assumption of the connectivity of $C[1] \ominus U_1$, we can further establish that
\begin{equation*}
\begin{split}
\kappa(C[1] \ominus U_1) 
&\ge 2n - 2 - 2\mu_1\\
&\ge 2n - 2 - 2\mu_1 - \mu_0 + 1 - \sum_{k=3}^{d_c-1} \mu_k\\
&= (2n - 2 - \ell - \mu_0 - \mu_1) + 1 + \mu_0 + \mu_2\\
&= h + 1 + \mu_0 + \mu_2\\
&\ge |Y'| + (\mu_0 + \mu_2).
\end{split}
\end{equation*}
By Lemma \ref{lm:disjoint-paths}, there exists a $(Y', y)$-fan in $C[1]-F_1$, denoted these $h+1$ internally disjoint paths as $P_{x_1^1y}, P_{x_2^1y}, \ldots, P_{x_h^1y}, P_{z^1y}$. Let $P_k = \langle x, x_k, P_{x_k^1y} \rangle$ for $k = 1,2, \ldots, h$, and let $P_{h+1} = \langle P_{xz}, P_{z^1y} \rangle$. Thus, $\{P_k\}_{k=1}^{h+1}$ forms $h + 1$ internally disjoint $(x, y)$-paths passing through only healthy vertices in $C[0]$ and $C[1]$ that connects $x$ and $y$. Moreover, we have
\begin{equation*}
\begin{split}
h + 1 &= 2n - 2 - \ell - \mu_0 - \mu_1 + 1 \\
&=
\begin{cases}
2n - \ell - (\mu_0 + \mu_1 + 1) \geq 2n - 2\ell & \text{if}\ \mu_0 + \mu_1 + 1 \leq \ell; \\
2n - 2\ell - 1 & \text{if}\ \mu_0 + \mu_1 = \ell.
\end{cases}
\end{split}
\end{equation*}
Therefore, $h + 1 \geq m$.

\medskip
\textbf{Case 3:} $\mu_0 = 0$.

In this case, $\mu_0 = \mu_1 = 0 < \ell$, which means we need to find $m = 2n - 2\ell$ internally disjoint paths connecting $x$ and $y$. Since the faults in $C[1]$ are caused by the faulty source vertices in $C[2]$, it follows that $|F_1|\le \ell$. By the assumption of the connectivity of $C[1] \ominus U_1$, we have
\begin{equation*}
\begin{split}
\kappa(C[1]) &= \kappa(C[1] \ominus U_1) \\
&\ge 2n - 2 \\
&\ge (2n - 2 - \ell - \mu_0 - \mu_1) + \ell\\
&= |Y| + \ell.
\end{split}
\end{equation*}
By Lemma \ref{lm:disjoint-paths}, there exists a $(Y, y)$-fan in $C[1]-F_1$, denoted these $h$ internally disjoint paths as $P_{x_1^1y}, P_{x_2^1y}, \ldots, P_{x_h^1y}$. Let $P_k = \langle x, x_k, P_{x_k^1y} \rangle$ for $k = 1,2, \ldots, h$. Thus, $\{P_k\}_{k=1}^{h}$ forms $h= 2n - 2 - \ell$ internally disjoint $(x, y)$-paths passing through only healthy vertices in $C[0]$ and $C[1]$ that connects $x$ and $y$.

If $\ell \geq 2$, then $h = 2n - 2 - \ell \geq 2n - 2\ell = m$. Hence, we have successfully proven the lemma.

If $\ell = 1$ and $d_c \geq 4$, then $h=2n-3$ and at least one of $U_2$ and $U_{d_c - 1}$ must be empty. Without loss of generality, assume $U_2 = \varnothing$, i.e., $\mu_2 = 0$ and $\mu_{d_c - 1} \leq 1$. Thus, all vertices in $C[1]$ are healthy, i.e., $F_1=\varnothing$. Let $Y' = Y \cup \{x^1\}$. By Lemma~\ref{lm:connectivity}, $\kappa(C[1])=2(n-1)=h+1=|Y'|$. According to Lemma~\ref{lm:disjoint-paths}, there exists a $(Y', y)$-fan in $C[1]$, denoted these $h+1$ internally disjoint paths as $P_{x_1^1y}, P_{x_2^1y}, \ldots, P_{x_h^1y}, P_{x^1y}$. Let $P_k = \langle x, x_k, P_{x_k^1y} \rangle$ for $k = 1, 2, \ldots, h$, and let $P_{h+1} = \langle x, P_{x^1y} \rangle$. Fig.~\ref{fig:l1} illustrates these paths. Therefore, $\{P_k\}_{k=1}^{h+1}$ forms $h + 1$ internally disjoint $(x, y)$-paths passing through only healthy vertices in $C[0]$ and $C[1]$ that connects $x$ and $y$, and $h + 1 = 2n - 2\ell = m$. 

\begin{figure}[ht]
	\centering
	\includegraphics[width=0.5\textwidth]{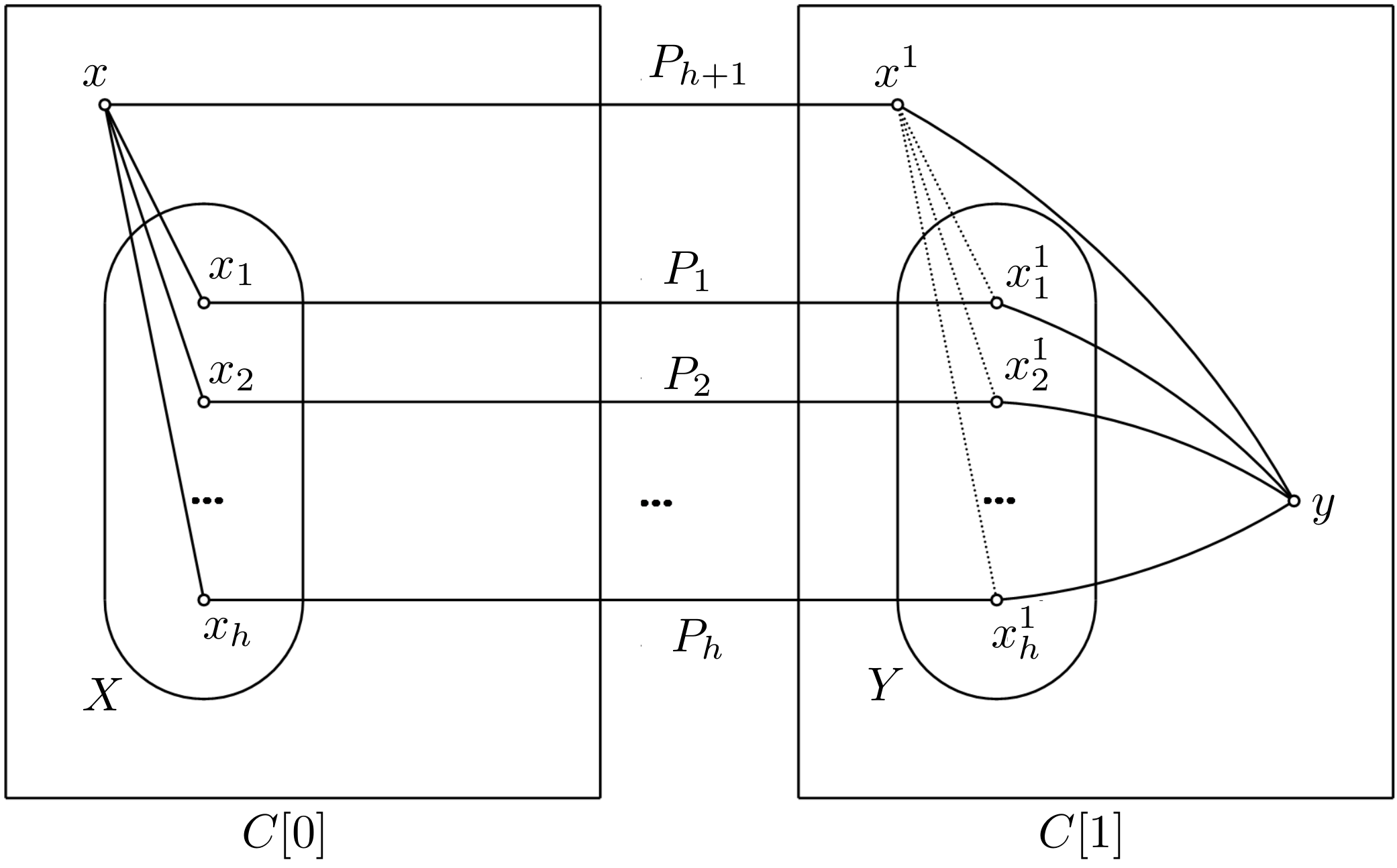}
	\caption{The illustration when $\ell=1$ and $d_c \ge 4$}
	\label{fig:l1}
\end{figure}

If $\ell = 1$ and $d_c = 3$, then $h=2n-3$ and $\mu_2 = |F_0|=|F_1|=1$. In this case, $x^1$ and $y^0$ are both healthy vertices. If $y = x^1$, then $\langle x, y = x^1 \rangle$ is an $(x, y)$-path. Let $P_k = \langle x, x_k, x_k^1, y = x^1 \rangle$ for $k = 1, 2, \ldots, h$, and let $P_{h+1} = \langle x, y = x^1 \rangle$. Therefore, $\{P_k\}_{k=1}^{h+1}$ forms $h + 1 = 2n-2\ell = m$ internally disjoint $(x, y)$-paths passing through only healthy vertices in $C[0]$ and $C[1]$ that connects $x$ and $y$ (see Fig.~\ref{fig:l2}).

\begin{figure}[htbp]
	\centering
	\includegraphics[width=0.5\textwidth]{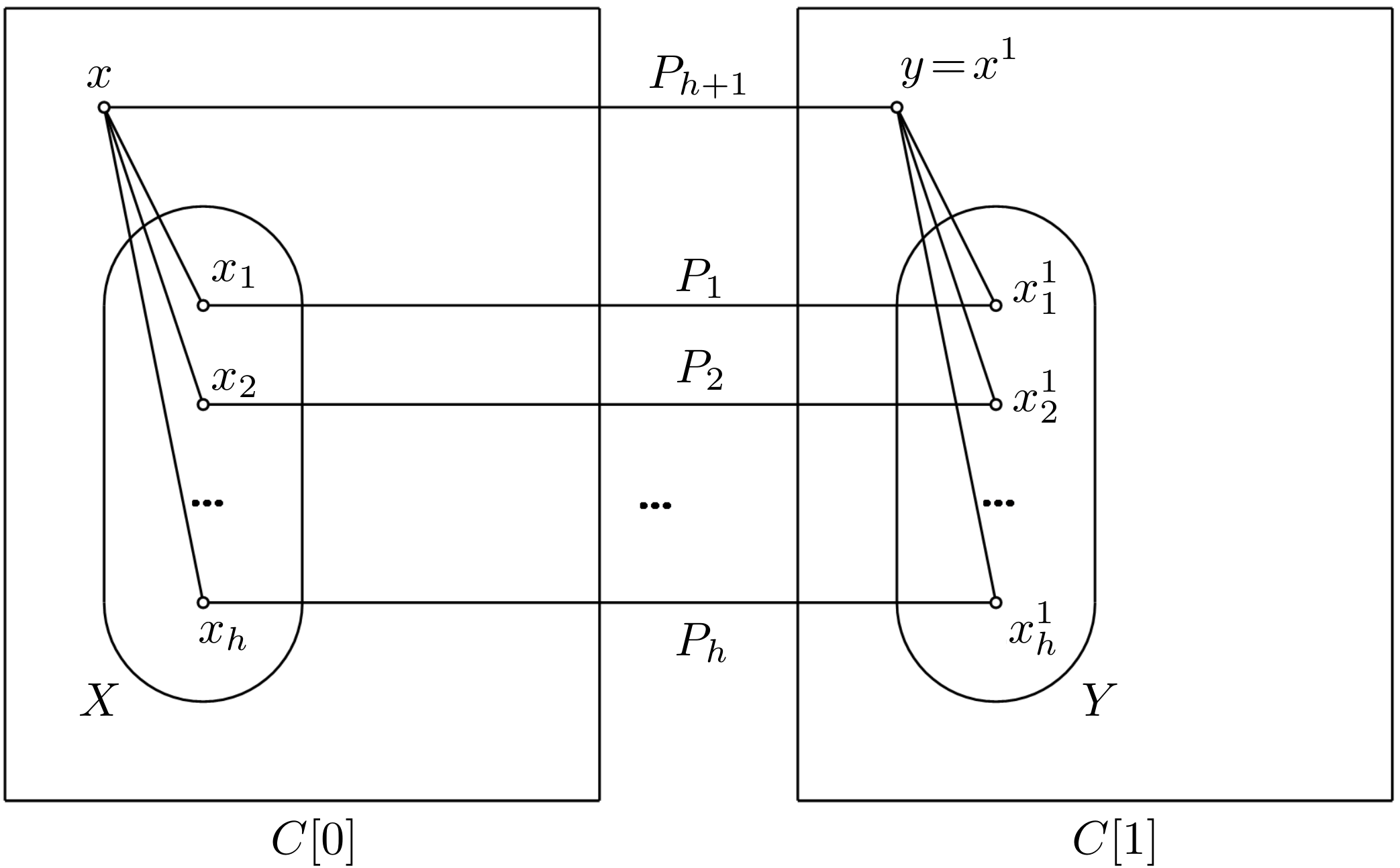}
	\caption{The illustration when $\ell=1$, $d_c=3$, and $y=x^1$}
	\label{fig:l2}
\end{figure}

If $y \neq x^1$, then $y^0 \neq x$. If $y^0 \in X$, without loss of generality, let $y^0 = x_h$ (i.e., $y=x_h^1$) and $Y' = (Y \setminus \{x_h^1\}) \cup \{x^1\}$. Recall that $|F_1|=1$. By Lemma~\ref{lm:connectivity}, $\kappa(C[1])=2(n-1)=h+1=|Y'|+|F_1|$. According to Lemma~\ref{lm:disjoint-paths}, there exists a $(Y', y)$-fan in $C[1]-F_1$, denoted these $h$ internally disjoint paths as $P_{x_1^1y}, P_{x_2^1y}, \ldots, P_{x_{h-1}^1y}, P_{x^1y}$. Let $P_k = \langle x, x_k, P_{x_k^1y} \rangle$ for $k = 1, 2, \ldots, h - 1$, $P_h = \langle x, x^1,y \rangle$, and $P_{h+1} = \langle x, x_h = y^0, y \rangle$. Therefore, $\{P_k\}_{k=1}^{h+1}$ forms $h + 1 = 2n-2\ell = m$ internally disjoint $(x, y)$-paths passing through only healthy vertices in $C[0]$ and $C[1]$ that connects $x$ and $y$ (see Fig.~\ref{fig:l3}).

\begin{figure}[htbp]
	\centering
	\includegraphics[width=0.5\textwidth]{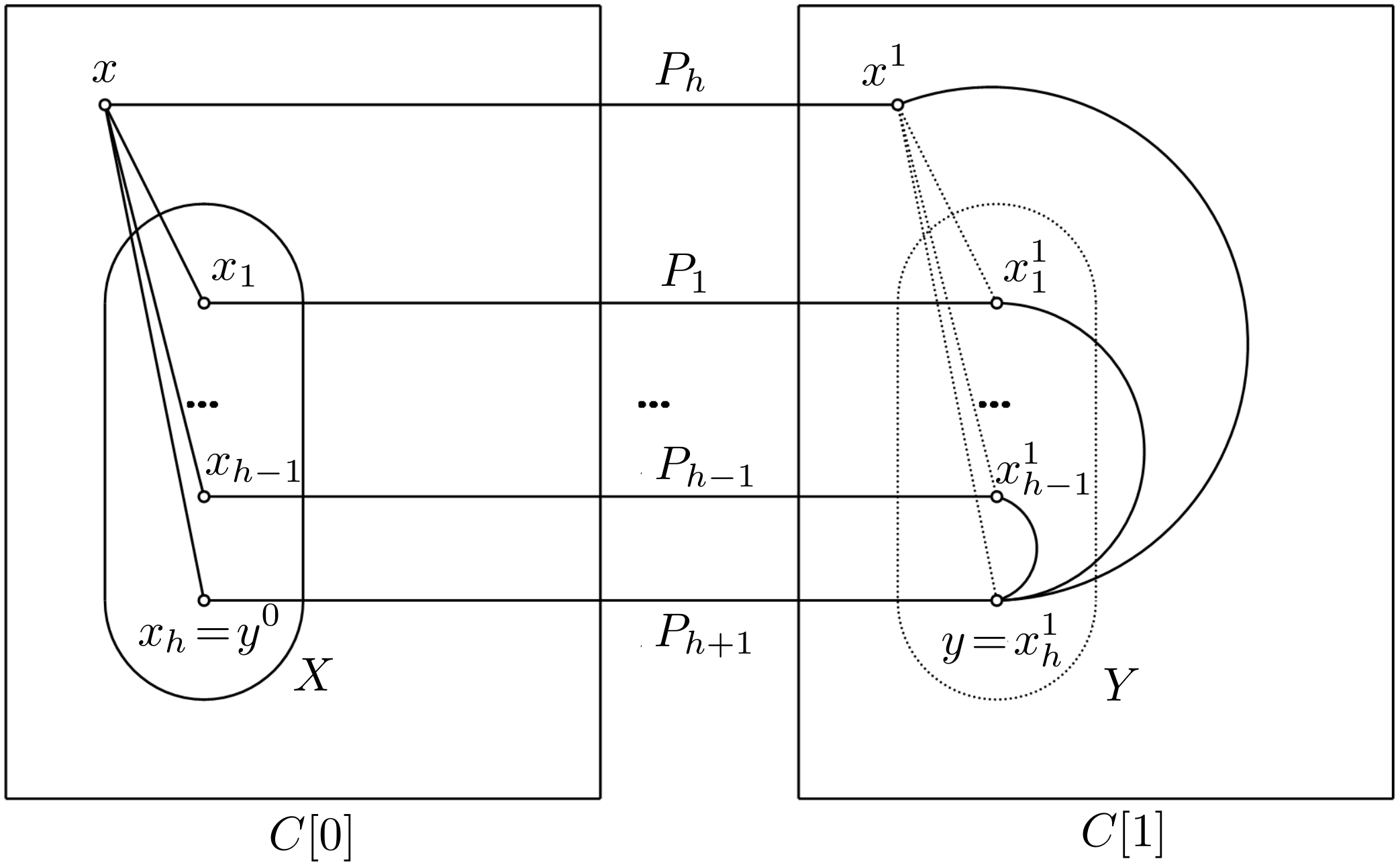}
	\caption{The illustration when $\ell=1$, $d_c=3$ and $y \in Y$}
	\label{fig:l3}
\end{figure}

\begin{figure}[ht]
	\centering
	\includegraphics[width=0.5\textwidth]{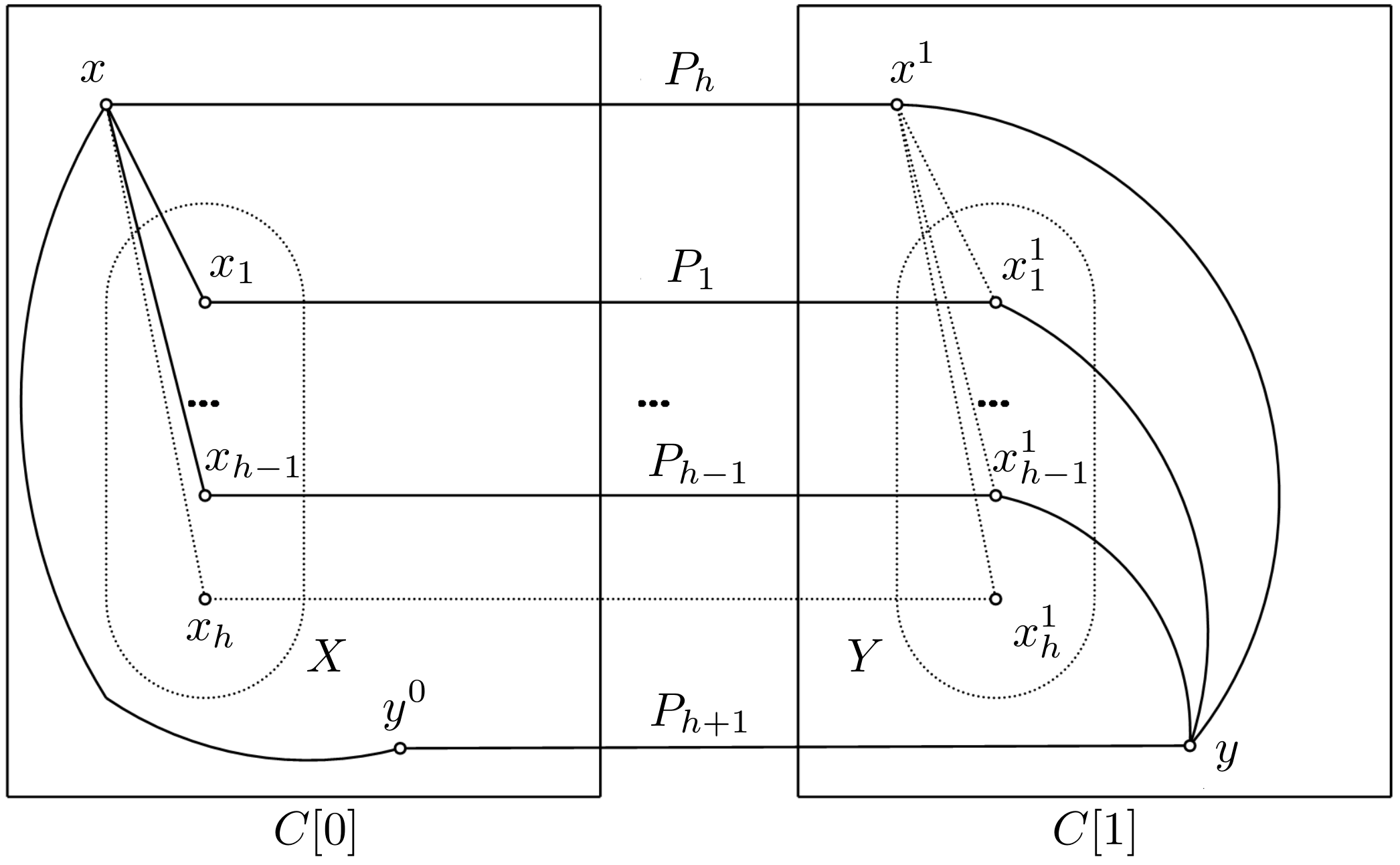}
	\caption{The illustration when $\ell=1$, $d_c=3$ and $y \in V(C[1]) \setminus (\{x^1\} \cup Y)$}
	\label{fig:l4}
\end{figure}

It remains to consider $y^0 \notin X\cup\{x\}$ (i.e., $y\notin Y\cup\{x^1\}$). Recall that $|F_0|=|F_1|=1$. Let $X' = X \setminus \{x_h\}$. By Lemma~\ref{lm:connectivity}, $\kappa(C[0])=2(n-1)=h+1=(|X'|+1)+|F_0|$. According to Lemma~\ref{lm:disjoint-paths}, there exists an $(x, y^0)$-path $P_{xy^0}$ in $C[0]-(X'\cup F_0)$. Let $Y' = (Y \setminus \{x_h^1\}) \cup \{x^1\}$. By Lemma~\ref{lm:connectivity}, $\kappa(C[1]) = 2(n-1)=h+1=|Y'| + |F_1|$. By Lemma \ref{lm:disjoint-paths}, there exists a $(Y', y)$-fan in $C[1]-F_1$, denoted these $h$ internally disjoint paths as $P_{x_1^1y}, P_{x_2^1y}, \ldots, P_{x_{h-1}^1y}, P_{x^1y}$. Let $P_k = \langle x, x_k, P_{x_k^1y} \rangle$ for $k = 1, 2, \ldots, h - 1$, and $P_h = \langle x, P_{x^1y} \rangle$, and $P_{h+1} = \langle P_{xy^0}, y \rangle$. Therefore, $\{P_k\}_{k=1}^{h+1}$ forms $h + 1 = 2n-2\ell = m$ internally disjoint $(x, y)$-paths passing through only healthy vertices in $C[0]$ and $C[1]$ that connects $x$ and $y$ (see Fig.~\ref{fig:l4}). 
\end{proof}

\medskip
\begin{theorem} \label{thm:lower-connectivity}
Let $C=C(d_1,d_2,\ldots,d_n)$, where $n \ge 2$ and $d_i \ge 3$ for $1 \le i \le n$. If $U\subset V(C)$ is a set of faulty source vertices with $0\leq |U|=\ell\leq n$, then $\kappa(C \ominus U) \geq 2n - 2\ell$.
\end{theorem}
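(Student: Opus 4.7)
The plan is to argue by induction on the dimension $n$, invoking Menger's theorem to reduce the statement to producing $2n - 2\ell$ internally disjoint $(x, y)$-paths in $C \ominus U$ for every pair of healthy vertices $x, y$. The base case $n = 2$ follows from Lemma \ref{lm:connectivity} (giving $\kappa(C) = 4$) together with direct verification for $\ell = 1$ (where the claim reduces to $\kappa(C \ominus U) \geq 2$) and the vacuous case $\ell = 2$.

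For the inductive step $n \geq 3$, I would partition $C$ along a dimension $c$ (tailored to the pair $x, y$) into subnetworks $C[0], C[1], \ldots, C[d_c-1]$, each isomorphic to an $(n-1)$-dimensional undirected toroidal mesh. Writing $U_i = U \cap V(C[i])$ and $\mu_i = |U_i|$, the inductive hypothesis applied to each subnetwork gives $\kappa(C[i] \ominus U_i) \geq 2(n-1) - 2\mu_i = 2n - 2 - 2\mu_i$, which is precisely the connectivity hypothesis powering Lemma \ref{lma:IDP}. Consequently, whenever $x \in V(C[i])$ and $y \in V(C[j])$ are healthy with $C[i]$ and $C[j]$ adjacent, Lemma \ref{lma:IDP} produces at least $m$ internally disjoint $(x, y)$-paths through $C[i] \cup C[j]$, with $m = 2n - 2\ell$ when $\mu_i + \mu_j < \ell$ and $m = 2n - 2\ell - 1$ when $\mu_i + \mu_j = \ell$.

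The case analysis then splits by the relative location of $x$ and $y$. When $x, y$ lie in a common subnetwork $C[i]$, the inductive estimate supplies $2n - 2 - 2\mu_i$ internally disjoint paths inside $C[i] \ominus U_i$; this already meets the target $2n - 2\ell$ whenever $\mu_i \leq \ell - 1$, and in the tight subcase $\mu_i = \ell$ all other subnetworks are source-free, so I would append two detour paths $\langle x, x^{i+1}, P^{+}, y^{i+1}, y \rangle$ and $\langle x, x^{i-1}, P^{-}, y^{i-1}, y \rangle$ through the adjacent subnetworks $C[i \pm 1]$. The outer neighbors $x^{i \pm 1}, y^{i \pm 1}$ are automatically healthy (no source in $U_i$ can be adjacent to them except $x, y$ themselves, and $x, y$ are not sources), and the linking paths $P^{\pm}$ exist by Lemma \ref{lm:connectivity} since $C[i \pm 1]$ carries at most $\ell \leq n - 1$ faulty vertices, well below $\kappa(C[i \pm 1]) = 2n - 2$. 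When $x, y$ lie in adjacent subnetworks, Lemma \ref{lma:IDP} already supplies $m$ paths; in the tight case $m = 2n - 2\ell - 1$ every other subnetwork is source-free, and I would augment by one extra path routed through a third subnetwork $C[k]$, either the long way around the dimension-$c$ cycle (when $x, y$ differ only in coordinate $c$) or via a short detour through $C[j+1]$. When $x, y$ lie in non-adjacent subnetworks, I would first seek a partition dimension $c$ in which $x$ and $y$ agree or are adjacent; if no such dimension exists, i.e.\ $x, y$ differ by at least $2 \pmod{d_k}$ in every coordinate $k$, I would splice paths through a fault-free intermediate subnetwork, combining an $(x, Z)$-fan in one subnetwork-chain with a $(Z, y)$-fan in another.

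The principal obstacle, I expect, is the non-adjacent subnetworks case: no single application of Lemma \ref{lma:IDP} produces paths spanning more than two consecutive subnetworks, so the disjointness bookkeeping across intermediate subnetworks must be handled carefully while still reaching the count $2n - 2\ell$. The tight adjacent-subnetworks augmentation is also delicate, since the extra path must be internally disjoint from those supplied by Lemma \ref{lma:IDP} within $C[i] \cup C[j]$; fortunately, routing that path through source-free subnetworks sidesteps this issue. Once all cases are resolved, the $2n - 2\ell$ internally disjoint $(x, y)$-paths in $C \ominus U$ are assembled and the induction closes.
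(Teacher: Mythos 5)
Your overall strategy --- induction on $n$, Menger's theorem, the subnetwork decomposition, and Lemma~\ref{lma:IDP} for adjacent subnetworks --- is the same as the paper's, but two of your cases have genuine problems. First, the same-subnetwork case contains a counting error. The inductive hypothesis gives $\kappa(C[i]\ominus U_i)\ge 2n-2-2\mu_i$, but $C[i]\ominus U_i=C[i]-N_{C[i]}[U_i]$ is \emph{not} the healthy part of $C[i]$: sources lying in $C[i-1]$ or $C[i+1]$ infect up to $\mu_{i-1}+\mu_{i+1}\le\ell-\mu_i$ additional vertices of $C[i]$. To route $2n-2\ell$ internally disjoint paths around these extra faults you need connectivity at least $2n-2\ell+(\ell-\mu_i)=2n-\ell-\mu_i$, and $2n-2-2\mu_i\ge 2n-\ell-\mu_i$ holds only when $\mu_i\le\ell-2$. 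So your claim that the inductive estimate ``already meets the target whenever $\mu_i\le\ell-1$'' fails at $\mu_i=\ell-1$; that subcase also needs a detour, and the detour is more delicate there because the single outside source can make $x^{i\pm1}$ or $y^{i\pm1}$ faulty. (The paper sidesteps this entire case by always choosing the partition dimension $c$ so that $x$ and $y$ lie in \emph{different} subnetworks, which is always possible since $x\ne y$.)

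Second, and more seriously, the non-adjacent case --- which you correctly identify as the principal obstacle --- is where the bulk of the actual proof lives, and your sketch does not resolve it. It cannot be avoided: take $x=00\cdots0$ and $y=22\cdots2$ in $C(5,5,\ldots,5)$, which land in non-adjacent subnetworks for every choice of $c$. Splicing an $(x,Z)$-fan with a $(Z,y)$-fan does not by itself yield internally disjoint $(x,y)$-paths; one must thread $m$ pairwise disjoint $(X_i,Y_i)$-path systems through \emph{every} intermediate subnetwork $C[1],\ldots,C[t-1]$, with the exit vertices of each system being the outer neighbors of the entry vertices of the next. The connectivity budget $\kappa(C[i]\ominus U_i)\ge 2k-2\mu_i$ covers $|X_i|+\mu_{i-1}+\mu_{i+1}$ only under the hypothesis $\mu_{i-1}+\mu_i+\mu_{i+1}<\ell$, which may force a reversal of direction around the dimension-$c$ cycle; and the tight subcases ($d_c=4$ with $\mu_0+\mu_t=\ell$, and $\mu_0=\ell$ requiring an extra path routed the long way around through $C[t+1],\ldots,C[d_c-1]$) each need their own constructions. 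As written, your proposal establishes the framework of the paper's argument but leaves its hardest and longest part unproved.
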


\begin{proof}
According to Lemma 2.3, the result holds when $\ell=0$. Since connectivity cannot be negative, it also applies when $\ell=n$. Hence, we only need to consider $0 < \ell < n$. We will proceed with this proof using induction on $n$.

When $n = 2$, the only possible choice is $\ell=1$ and $C = C(d_1, d_2)$ is a $d_1 \times d_2$ mesh. Since $C(d_1,d_2)$ is vertex-transitive, without loss of generality, assume that $00$ is the unique faulty source vertex. Fig.~\ref{fig:del_00} illustrates the network obtained by removing vertex $00$ and its neighbors from $C$. As the resulting network is neither empty nor complete, and it is connected, we have $\kappa(C \ominus U) \geq 2 = 2n - 2\ell$.

\begin{figure}[htbp]
	\centering
	\includegraphics[width=0.75\textwidth]{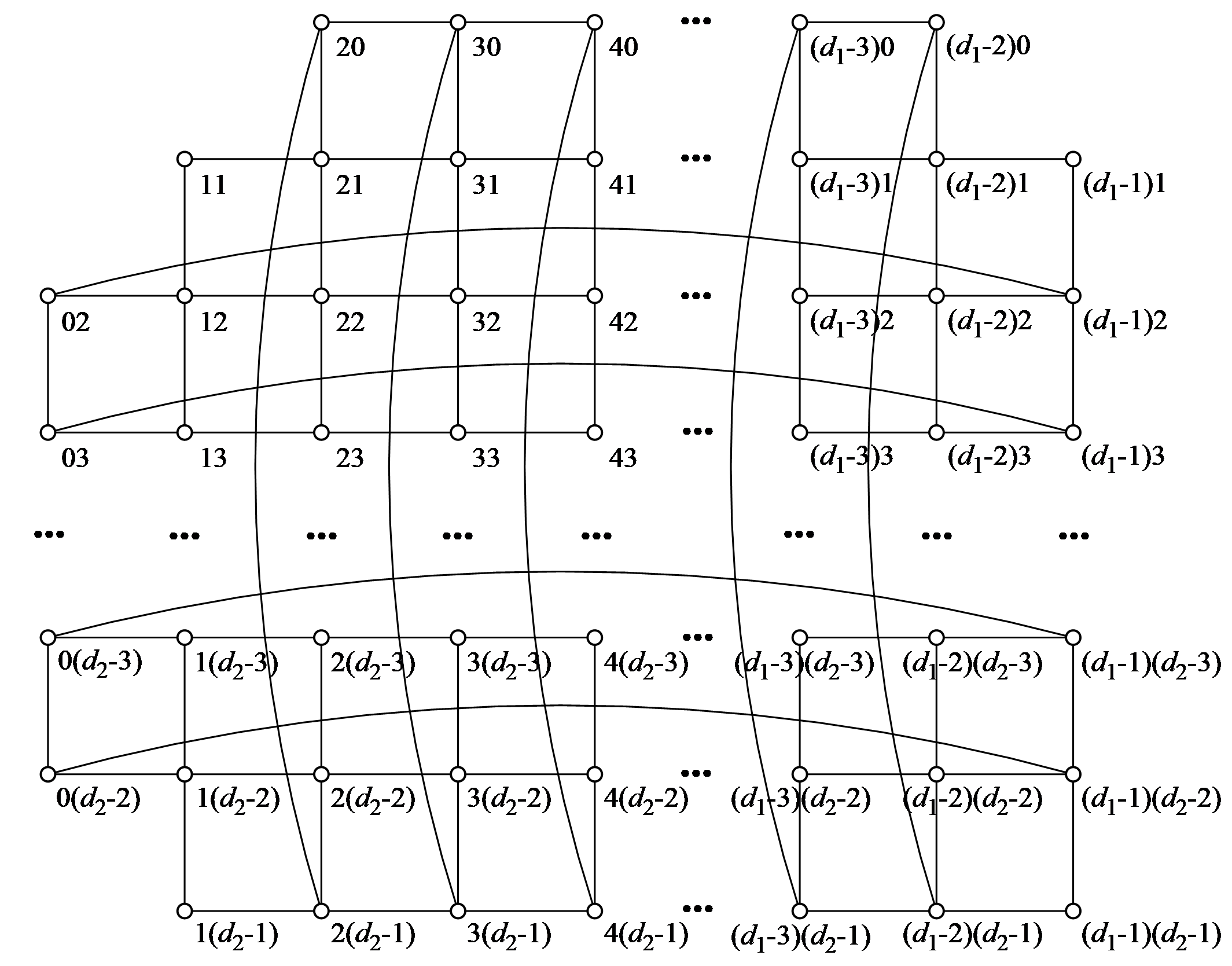}
	\caption{A 2-dimensional undirected toroidal mesh after removing vertex $00$ and its neighbors. Note that the first row does not exist when $d_1 = 3$; and the first column does not exist when $d_2 = 3$.}
	\label{fig:del_00}
\end{figure}

Let $k \geq 2$ be an integer and suppose that the theorem holds when $n = k$, i.e., $\kappa(C \ominus U) \geq 2k - 2\ell$ for any set $U \subseteq V(C)$ of size $\ell$, where $0 \le \ell \le k$. We now consider  $n = k + 1$ and there is a set $U$ of faulty source vertices with $0<|U|=\ell<k+1$. To prove $\kappa(C \ominus U) \geq 2(k + 1) - 2\ell$, it suffices to show that for any two distinct vertices $x,y \in V(C \ominus U)$, there exist at least $2k + 2 - 2\ell$ internally disjoint $(x, y)$-paths in $C \ominus U$. Note that we can always find an integer $c$ with $1 \leq c \leq n=k + 1$ such that $C$ is partitioned into $d_c$ subnetworks, denoted as $C[0],C[1],\ldots,C[d_c-1]$, along the $c$-th dimension, and $x$ and $y$ reside in different subnetworks. Denote
\[
F = \{v \colon\, v \in N[U] \}\ \text{and}\ F_i = F \cap V(C[i])\ \text{for}\ 0 \le i \le d_c-1,
\]
and 
\[
U_i = U \cap V(C[i])\ \text{and}\ \mu_i = |U_i|\ \text{for}\ 0 \le i \le d_c-1.
\]
By induction hypothesis, for $i = 0, 1, \ldots, d_c - 1$, we have $\kappa(C[i] \ominus U_i) \geq 2k - 2\mu_i$, where $0\le \mu_i \leq \ell \leq k$. Consider the following two cases.

\medskip
\textbf{Case 1:} $x$ and $y$ are located in adjacent subnetworks.

Without loss of generality, let $x \in V(C[0])$ and $y \in V(C[1])$, and assume that $\mu_0 \ge \mu_1$. By Lemma~\ref{lma:IDP}, there exist at least $m$ internally disjoint $(x, y)$-paths passing through only healthy vertices in $C[0]$ and $C[1]$, denoted as $P_1, P_2, \ldots, P_m$, where $m$ is defined as follows:

If $\mu_0 + \mu_1 < \ell$, then $m = 2n - 2\ell = 2k + 2 - 2\ell$. Thus, these paths are internally disjoint that connect $x$ and $y$ in $C \ominus U$, satisfying the desired requirement.

If $\mu_0 + \mu_1 = \ell$, then $m = 2n - 2\ell - 1 = 2k + 1 - 2\ell$. In this case, we need to construct another $(x, y)$-path in $C \ominus U$ that is internally disjoint from $P_1, P_2, \ldots, P_m$. Note that $\mu_i = 0$ for $i = 2, 3, \ldots, d_c - 1$. Since both $x$ and $y$ are healthy vertices, $x^2, x^3, \ldots, x^{d_c-1}, y^2, y^3, \ldots, y^{d_c-1}$ must also be healthy vertices. Since $|F_2|\le \ell\le k$ and $k\ge 2$, we have $\kappa(C[2]) = \kappa(C[2] \ominus U_2) \ge 2k - 2\mu_2 = 2k\ge |F_2|+1$. By Lemma \ref{lm:disjoint-paths}, there exists an $(x^2, y^2)$-path $P_{x^2y^2}$ in $C[2]-F_2$. Let
\[
P_{m+1} = 
\begin{cases} 
\langle x, P_{x^2y^2}, y \rangle & \text{if}\ d_c = 3; \\
\langle x, x^{d_c-1}, x^{d_c-2}, \ldots, x^3, P_{x^2y^2}, y \rangle & \text{if}\ d_c \ge 4.
\end{cases}
\]
Thus, $\{P_i\}_{i=1}^{m+1}$ forms $m + 1 = 2k + 2 - 2\ell$ internally disjoint $(x, y)$-paths in $C \ominus U$.

\medskip
\textbf{Case 2:} $x$ and $y$ are located in non-adjacent subnetworks.

In this case, $d_c \geq 4$. Without loss of generality, let $x \in V(C[0])$ and $y \in V(C[t])$ for $2 \leq t \leq d_c - 2$, and assume that $\mu_0 \geq \mu_t$. According to Lemma~\ref{lm:healthy-vertex}, $C[i]$ contains at least one healthy vertex, say $v_i$, for each $i = 0,1, 2,\ldots, d_c-1$. Let $v_0 = x$ and $v_t = y$. By Lemma~\ref{lma:IDP}, for each $i = 0, 1, \ldots, t - 1$, we can find at least $m$ internally disjoint $(v_i, v_{i+1})$-paths passing through only healthy vertices in $C[i]$ and $C[i + 1]$ that connect $v_i \in V(C[i])$ and $v_{i+1} \in V(C[i + 1])$, denoted these $m$ internally disjoint paths as $\{P_{v_iv_{i+1}}^j\}_{j=1}^m$, where
\[
m = 
\begin{cases} 
2n - 2\ell = 2k + 2 - 2\ell & \text{if}\ \mu_i + \mu_{i+1} < \ell; \\
2n - 2\ell - 1 = 2k + 1 - 2\ell & \text{if}\ \mu_i + \mu_{i+1} = \ell. 
\end{cases} 
\]

Note that for $i = 1, 2, \ldots, t - 1$, two sets of paths $\{P_{v_{i-1}v_i}^j\}_{j=1}^m$ and $\{P_{v_iv_{i+1}}^j\}_{j=1}^m$ may share common vertices in $C[i]$, except for the vertex $v_i$. Here, we describe some notations that appear in these paths. For more details, readers can refer to Fig.~\ref{fig:paths-1}, which is explained as follows:

\begin{enumerate}[(1)]
\item \label{1}
For $j=1,2,\ldots,m$, let $P_{(v_0=x)v_1}^j = \langle P_{x v_{0j}}, P_{u_{1j}v_1} \rangle$. Here, $P_{x v_{0j}}$ is the path connecting $x$ to a vertex $v_{0j}$ in $C[0]-F_0$, while $P_{u_{1j}v_1}$ connects a vertex $u_{1j}$ to $v_1$ in $C[1]-F_1$. The healthy vertices $v_{0j}$ and $u_{1j}$ are outer neighbors to each other across $C[0]$ and $C[1]$. Refer to Description~(1) in Fig.~\ref{fig:paths-1}.
\item \label{2}
For $j=1,2,\ldots,m$, let $P_{v_{t-1}(v_t=y)}^j = \langle P_{v_{t-1}v_{(t-1)j}}, P_{u_{tj}y} \rangle$. Here, $P_{v_{t-1}v_{(t-1)j}}$ is the path connecting $v_{t-1}$ to a vertex $v_{(t-1)j}$ in $C[t-1]-F_{t-1}$, while $P_{u_{tj}y}$ connects a vertex $u_{tj}$ to $y$ in $C[t]-F_t$. The healthy vertices $v_{(t-1)j}$ and $u_{tj}$ are outer neighbors to each other across $C[t-1]$ and $C[t]$. Refer to Description~(2) in Fig.~\ref{fig:paths-1}.
\item \label{3}
For $i = 1,2,\ldots,t-2$ and $j=1,2,\ldots,m$, let $P_{v_iv_{i+1}}^j = \langle P_{v_i v_{ij}}, P_{u_{(i+1)j}v_{i+1}}\rangle$. Here, $P_{v_i v_{ij}}$ is the path connecting $v_i$ to a vertex $v_{ij}$ in $C[i]-F_i$, while $P_{u_{(i+1)j}v_{i+1}}$ connects a vertex $u_{(i+1)j}$ to $v_{i+1}$ in $C[i+1]-F_{i+1}$. The healthy vertices $v_{ij}$ and $u_{(i+1)j}$ are outer neighbors to each other across $C[i]$ and $C[i+1]$. Refer to Description~(3) in Fig.~\ref{fig:paths-1}.
\end{enumerate}

\begin{figure}[htbp]
	\centering
	\includegraphics[width=1.00\textwidth]{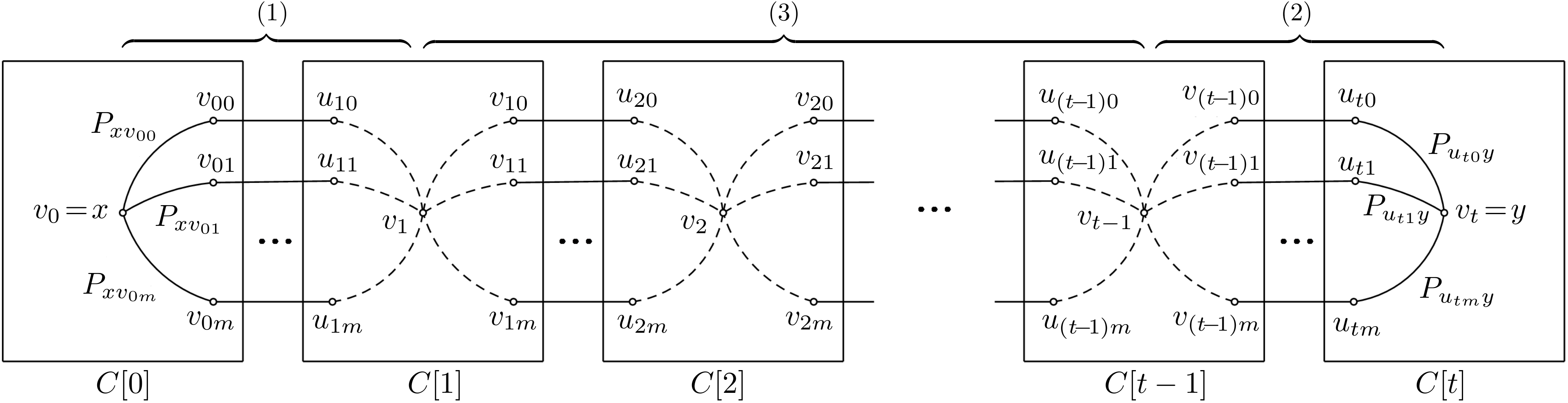}
	\caption{The illustration of three descriptions of paths in Case 2 of Theorem~\ref{thm:lower-connectivity}.}
	\label{fig:paths-1}
\end{figure}

In what follows, we establish the desired paths by consider the following two cases.

\textbf{Case 2.1:} $\mu_0 < \ell$.

Recall that $\mu_t \leq \mu_0$. When $d_c \geq 5$ or when $d_c = 4$ and $\mu_0 + \mu_t < \ell$, we can always assume that the condition $\mu_{i-1} + \mu_i + \mu_{i+1} < \ell$ holds for $i = 1,2, \ldots, t - 1$. However, if this assumption does not hold, we can replace $C[0], C[1], \ldots, C[t]$ with $C[0], C[d_c - 1], C[d_c - 2], \ldots, C[t]$, while still denoting them as $C[0], C[1], \ldots, C[t]$ (i.e., we choose another reverse sequence of subnetworks in this cycle from $C[0]$ to $C[t]$). This adjustment ensures that the assumption remains valid. For each $i$ with $0 < i < t$, let $X_i = \{u_{ij}\}_{j=1}^m$ and $Y_i = \{v_{ij}\}_{j=1}^m$, where $m = 2k + 2 - 2\ell$ (because $\mu_{i-1} + \mu_i < \ell$ and $\mu_i + \mu_{i+1} < \ell$). Since $C[i] \ominus U_i$ contains at most $\mu_{i-1} + \mu_{i+1}$ faulty vertices, by induction hypothesis, we can obtain that
\begin{equation*}
\begin{split}
\kappa(C[i] \ominus U_i) &\ge 2k-2\mu_i \\
&= m-2+2\ell-2\mu_i \\ 
&\ge m-2+2(\mu_{i-1} + \mu_i + \mu_{i+1} + 1)-2\mu_i \\
&= m+2(\mu_{i-1} + \mu_{i+1}) \\
&\ge |X_i|+(\mu_{i-1} + \mu_{i+1}).
\end{split}
\end{equation*}
By Lemma \ref{lm:disjoint-paths}, there exist $m = 2k + 2 - 2\ell$ pairwise disjoint $(X_i, Y_i)$-paths in $C[i]-F_i$, denoted these paths as $\{P_{u_{ij}v_{ij}}\}_{j=1}^m$. For $j = 1, 2, \ldots, m$, let 
\[
P_j = \langle P_{xv_{0j}}, P_{u_{1j}v_{1j}}, P_{u_{2j}v_{2j}}, \ldots, P_{u_{(t-1)j}v_{(t-1)j}}, P_{u_{tj}y} \rangle.
\]
Fig.~\ref{fig:paths-2} illustrates these paths. Thus, $\{P_j\}_{j=1}^m$ forms $m = 2k + 2 - 2\ell$ internally disjoint $(x, y)$-paths in $C \ominus U$.

\begin{figure}[htbp]
	\centering
	\includegraphics[width=1.00\textwidth]{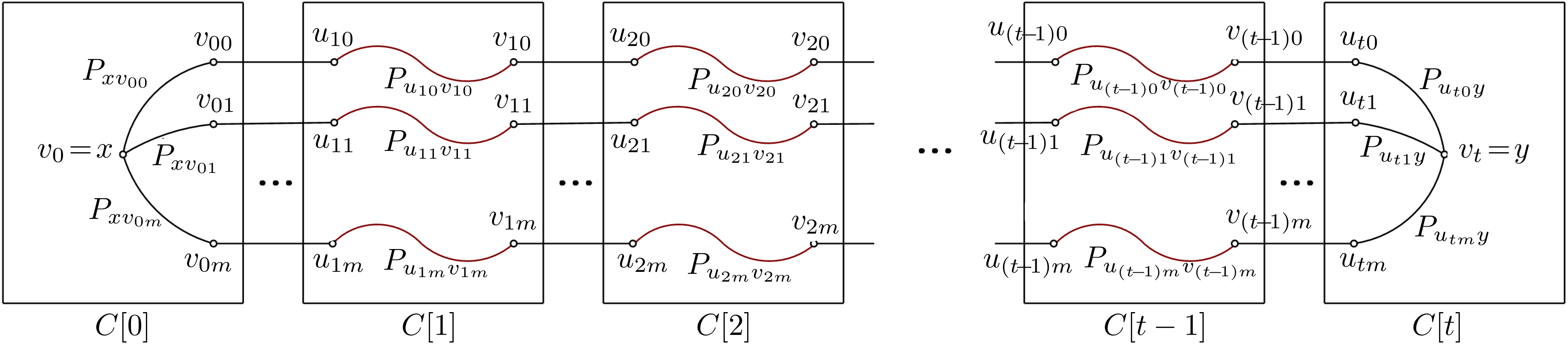}
	\caption{The illustration of Case 2.1, in the condition where $d_c \geq 5$, or $d_c = 4$ and $\mu_0 + \mu_t < \ell$}
	\label{fig:paths-2}
\end{figure}

When $d_c = 4$ and $\mu_0 + \mu_t = \ell$, the only possible choice is $t=2$. That is, $\mu_0 + \mu_2 = \ell$ and $\mu_1 = \mu_3 = 0$. Recall that we have $\mu_2 \leq \mu_0 < \ell$, which implies that $\ell \ne 1$ (i.e., $\ell \ge 2$). In this case, let $X = \{u_{1j}\}_{j=1}^m$ and $Y = \{v_{1j}\}_{j=1}^m$, where $m = 2k + 2 - 2\ell$.  Since the faults in $C[1]$ arise from the faulty source vertices located in $C[0]$ and $C[2]$, we have $|F_1| \le \mu_0 + \mu_2 = \ell$. Also, we have $\kappa(C[1]) = \kappa(C[1] \ominus U_1) = 2k - 2\mu_1 = 2k=m-2+2\ell\geq |X|+\ell$. By Lemma \ref{lm:disjoint-paths}, there exist $m = 2k + 2 - 2\ell$ pairwise disjoint $(X, Y)$-paths in $C[1]-F_1$, denoted by $\{P_{u_{1j}v_{1j}}\}_{j=1}^m$. For $j = 1,2, \ldots, m$, let $P_j = \langle P_{xv_{0j}}, P_{u_{1j}v_{1j}}, P_{u_{2j}y} \rangle$. Thus, $\{P_j \}_{j=1}^m$ forms $m = 2k + 2 - 2\ell$ internally disjoint $(x, y)$-paths in $C \ominus U$.
 
\textbf{Case 2.2:} $\mu_0 = \ell$.

In this case, we have $\mu_i = 0$ for $i = 1,2, \ldots, d_c - 1$. For Description~(\ref{1}), since $\mu_0+\mu_1=\ell$, we have $m = 2k + 1 - 2\ell$, and one has that $m = 2k + 2 - 2\ell$ for Descriptions~(\ref{2}) and (\ref{3}). Here, we consider $m = 2k + 1 - 2\ell$ for consistency because we can choose the first $2k + 1 - 2\ell$ required paths for the latter. Let $X_0 = \{v_{0j}\}_{j=1}^m$ and $Y_t = \{u_{tj}\}_{j=1}^m$. Recall that there is an $(x, X_0)$-fan in $C[0]$ and there is a $(Y_t, y)$-fan in $C[t]$. For $0 < i < t$, let $X_i = \{u_{ij}\}_{j=1}^m$ and $Y_i = \{v_{ij}\}_{j=1}^m$. Since $C[i]\ominus U_i$ contains at most $\mu_{i-1} + \mu_{i+1}\leq \ell$ faulty vertices and $\ell \geq 1$, by induction hypothesis, we have 
\[
\kappa(C[i]) = \kappa(C[i] \ominus U_i) \geq 2k - 2\mu_i = 2k=m-1+2\ell\geq |X_i| + (\mu_{i-1} + \mu_{i+1}).
\]
By Lemma~\ref{lm:disjoint-paths}, there exist $m = 2k + 1 - 2\ell$ pairwise disjoint $(X_i, Y_i)$-paths in $C[i]-F_i$, denoted as $\{P_{u_{ij}v_{ij}}\}_{j=1}^m$. Since $\mu_{t-1}=\mu_t=\mu_{t+1}=0$, we have $F_t=\varnothing$. By Lemma~\ref{lm:connectivity}, since $\ell \geq 1$, we have
\[
\kappa(C[t])=2(n-1)=2k=m-1+2\ell\geq m+1.
\]
Moreover, by Lemma~\ref{lm:healthy-pairs}, at least $h+1$ healthy vertices are contained in $C[t]$ such that these $h+1$ vertices and their outer neighbors in $C[t+1]$ are also healthy, where
\begin{equation*}
\begin{split}
h + 1 &= (2n - 2 - \ell - \mu_t - \mu_{t+1}) + 1 \\
&= 2(k + 1) - 2 - \ell + 1 \\
&> 2k + 1 - 2\ell \\
&= m.
\end{split}
\end{equation*}
This implies that either $y$ or there is a vertex $z \notin \{u_{tj}\}_{j=1}^m\cup\{y\}$ in $C[t]$ such that its outer neighbor in $C[t+1]$ is healthy. Without loss of generality, assume both $z$ and $z^{t+1}$ are healthy. Since $\kappa(C[t])\geq m+1$, we can extend the existing $(Y_t, y)$-fan to a $(Y_t\cup\{z\}, y)$-fan in $C[t]$, which contains internally disjoint paths $P_{zy}$ and $P_{u_{tj}y}$ for $j = 1,2,\ldots,m$. Since $\mu_i = 0$ for $i = t+1,t+2, \ldots, d_c - 1$, all vertices $x^{d_c - 1}, x^{d_c - 2}, \ldots, x^{t+1}$ are healthy. Since $|F_{t+1}| \le \ell$, by induction hypothesis, it follows that 
\[
\kappa(C[t+1]) = \kappa(C[t+1] \ominus U_{t+1})\geq 2k - 2\mu_{t+1}=2k\geq 2\ell\geq |F_{t+1}|+1.
\]
By Lemma~\ref{lm:disjoint-paths}, there exists an $(x^{t+1}, z^{t+1})$-path $P_{x^{t+1}z^{t+1}}$ in $C[t+1]-F_{t+1}$. For $j = 1,2, \ldots, m$, let
\[
P_j = \langle P_{xv_{0j}},P_{u_{1j}v_{1j}}, P_{u_{2j}v_{2j}}, \ldots, P_{u_{(t-1)j}v_{(t-1)j}},P_{u_{tj}y} \rangle.
\]
Also, let
\[
P_{m+1} = \langle x, x^{d_c-1}, x^{d_c-2}, \ldots, x^{t+2}, P_{x^{t+1}z^{t+1}}, P_{zy} \rangle.
\]
Thus, $\{P_j\}_{j=1}^{m+1}$ forms $m + 1 = 2k + 2 - 2\ell$ internally disjoint $(x, y)$-paths in $C \ominus U$ (see
Fig.~\ref{fig:illustration of case 2.2}). 

Thus, we demonstrate that for any two distinct vertices $x, y \in V(C \ominus U)$, there exist at least $2k + 2 - 2\ell$ internally disjoint $(x, y)$-paths in $C \ominus U$. This completes the proof.
\end{proof}

\begin{figure}[htbp]
	\centering
	\includegraphics[width=1.00\textwidth]{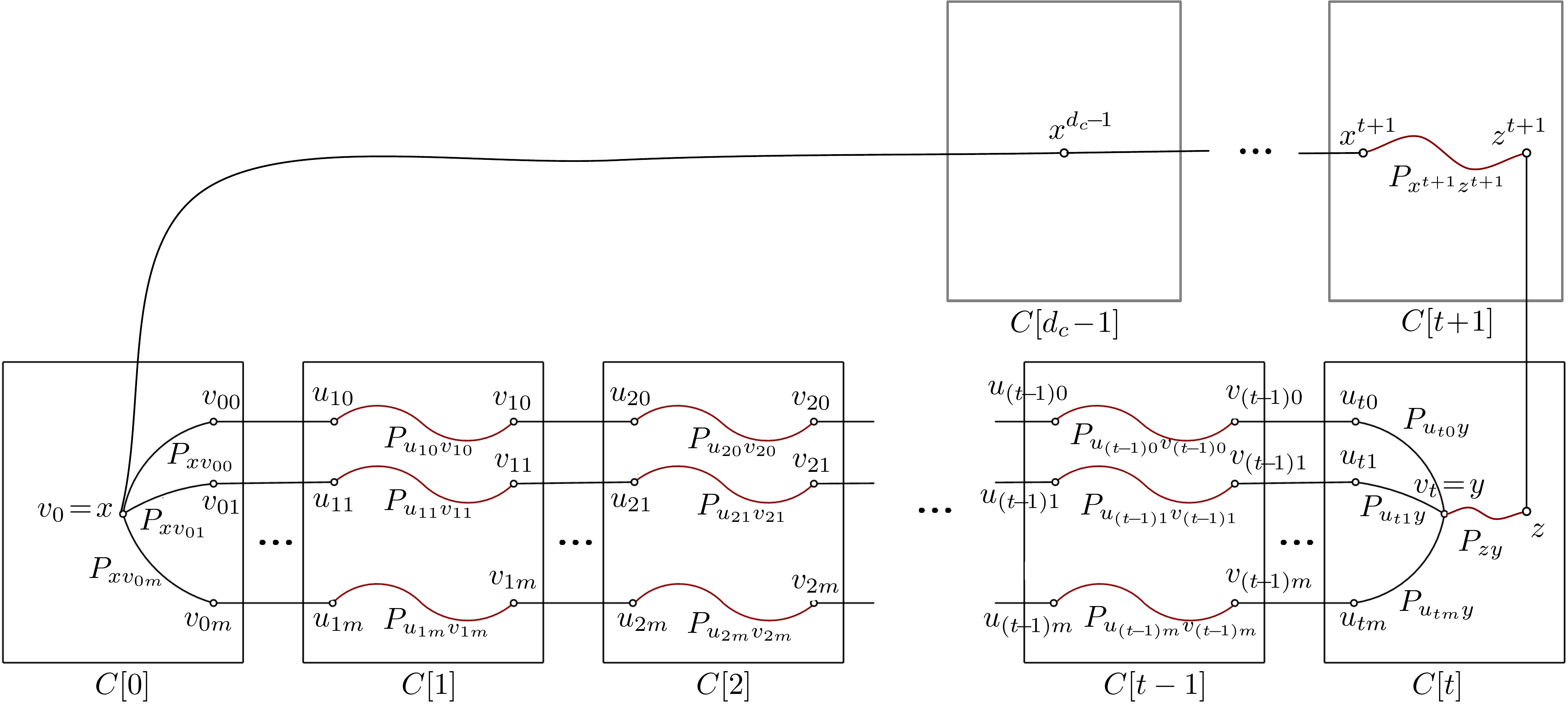}
	\caption{The illustration of Case 2.2}
	\label{fig:illustration of case 2.2}
\end{figure}

We are now in a position to deal with the proof of a lower bound of $\kappa_{NB}(C(d_1, d_2, \cdots, d_n))$.

\begin{theorem} \label{thm:lower-bound}
Let $C=C(d_1,d_2,\ldots,d_n)$, where $n \ge 2$ and $d_i \ge 3$ for $1 \le i \le n$. Then, $\kappa_{NB}(C) \geq n$.
\end{theorem}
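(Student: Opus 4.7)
The plan is to argue by contradiction. Suppose $\kappa_{NB}(C) \le n-1$, so there exists a faulty source set $U \subseteq V(C)$ with $|U|=\ell \le n-1$ such that $C \ominus U$ is disconnected, empty, or complete. I rule out each possibility separately.

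For the disconnected and empty cases, I will simply quote Theorem~\ref{thm:lower-connectivity}, which gives $\kappa(C \ominus U) \ge 2n - 2\ell \ge 2$. Since $\kappa(G) \ge 2$ forces $G$ to be connected and to contain at least three vertices, both of these cases are excluded at once.

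For the complete case, my strategy is to show that the clique number satisfies $\omega(C) \le 3$ while simultaneously $|V(C \ominus U)| > 3$. For the clique bound, Definition~\ref{def:mesh} says that an edge of $C$ changes exactly one coordinate by $\pm 1 \pmod{d_j}$. If three vertices are pairwise adjacent, then all three edges must involve a common coordinate $j$ (otherwise some pair would differ in two coordinates), which forces $d_j = 3$ and the three vertices to take the three distinct values in coordinate $j$ while agreeing elsewhere. Any fourth vertex adjacent to all three would either need a fourth value in coordinate $j$, which is impossible, or flip a different coordinate, which destroys adjacency with at least two of the three. Hence $C$ contains no $K_4$, so $\omega(C) \le 3$. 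For the vertex count, since $|N[U]| \le \ell(2n+1)$, we have
\[
|V(C \ominus U)| \;\ge\; \prod_{i=1}^{n} d_i - \ell(2n+1) \;\ge\; 3^n - (n-1)(2n+1),
\]
and a direct check shows this is strictly greater than $3$ for every $n \ge 2$ (trivially by exponential versus quadratic growth for $n \ge 3$, and by hand for $n = 2$). Therefore $C \ominus U$ has more vertices than any clique of $C$ and cannot be complete, contradicting the assumption.

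The main obstacle is the structural bound $\omega(C) \le 3$, which needs a careful case analysis of how edges can combine into triangles in a toroidal mesh; once this observation is in hand, the remainder of the proof is a routine combination of Theorem~\ref{thm:lower-connectivity} with the elementary vertex count above.
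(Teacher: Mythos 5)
Your proposal is correct and follows essentially the same route as the paper: invoke Theorem~\ref{thm:lower-connectivity} to get $\kappa(C\ominus U)\ge 2n-2\ell\ge 2$ (hence connected), use the vertex count $|V(C\ominus U)|\ge 3^n-(n-1)(2n+1)\ge 4$ to rule out emptiness, and use $K_4$-freeness to rule out completeness. The only difference is cosmetic: you frame it as a contradiction and supply an explicit proof that $\omega(C)\le 3$, which the paper merely asserts.
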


\begin{proof}
Let $U$ be the set of any $\ell$ faulty source vertices in $C$, where $0 \leq \ell \leq n - 1$. Then, by Theorem \ref{thm:lower-connectivity}, $\kappa(C \ominus U) \geq 2n - 2\ell \geq 2$, indicating that $C \ominus U$ is connected. Since $|V(C \ominus U)| \geq (\prod_{i=1}^{n} d_i) - \ell (2n + 1) \geq 3^n - (n - 1)(2n + 1) \geq 4$, the survival graph $C \ominus U$ is not an empty graph. Furthermore, as the $n$-dimensional undirected toroidal mesh is a $K_4$-free graph, it follows that $C \ominus U$ is also not a complete graph. Therefore, $\kappa_{NB}(C) > n - 1$, i.e., $\kappa_{NB}(C) \geq n$.
\end{proof}

\subsection{Upper bound of $\kappa_{NB}(C(d_1,d_2,\ldots,d_n))$}
\label{Sec:Upperr-bound}

The following theorem provides an example to determine the upper bound of neighbor connectivity in an $n$-dimensional undirected toroidal mesh.

\begin{theorem} \label{thm:upper-bound}
Let $C=C(d_1,d_2,\ldots,d_n)$, where $n \ge 2$ and $d_i \ge 3$ for $1 \le i \le n$. Then, $\kappa_{NB}(C) \leq n$.
\end{theorem}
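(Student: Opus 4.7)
The plan is to exhibit an explicit set $U\subset V(C)$ of size $n$ such that $C\ominus U$ is either disconnected or a single vertex (hence complete). Fix the base vertex $w = 00\cdots 0$ and its $2n$ neighbors
\[
e_i^+ = 0\cdots 0\,1\,0\cdots 0, \qquad e_i^- = 0\cdots 0\,(d_i-1)\,0\cdots 0 \qquad (i=1,2,\ldots,n),
\]
where the nonzero coordinate is in position $i$. The goal is to choose $n$ vertices that collectively dominate all $2n$ neighbors of $w$ without dominating $w$ itself; then $w$ becomes an isolated vertex of $C\ominus U$.

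The key observation is that a vertex at Hamming distance $2$ from $w$ (i.e.\ one having exactly two nonzero coordinates and those entries belonging to $\{1,d_i-1\}$) is adjacent to exactly two neighbors of $w$. Hence a pairing of the $2n$ neighbors of $w$ into $n$ pairs, each involving coordinates from two \emph{different} directions, yields $n$ vertices dominating all $2n$ neighbors while avoiding $w$. A convenient choice is
\[
U = \{u_1,u_2,\ldots,u_n\}, \qquad \text{where } u_i \text{ has } 1 \text{ in position } i,\ d_{i+1}-1 \text{ in position } i\!+\!1\pmod n,
\]
and zeros elsewhere. I will verify that (i) each $u_i\ne w$ and is at distance $2$ from $w$, so $w\notin N[U]$, and (ii) $u_i$ is adjacent to both $e_i^+$ and $e_{i+1}^-$, so as $i$ ranges over $1,\ldots,n$ every neighbor of $w$ lies in $N(U)$. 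Consequently $w$ is an isolated vertex in the survival graph $C\ominus U$.

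It remains to confirm the survival graph has the required form. We have $|N[U]|\le n + \sum_i |N(u_i)| = n+2n^2$. For $n\ge 3$,
\[
\Bigl|\prod_{i=1}^n d_i\Bigr| - |N[U]| \;\ge\; 3^n - (2n^2+n) \;\ge\; 2,
\]
so $C\ominus U$ contains $w$ together with at least one further vertex, and since $w$ is isolated the survival graph is disconnected. For $n=2$ the same construction leaves (in the worst case $d_1=d_2=3$) the single vertex $w$, so $C\ominus U\cong K_1$, a complete graph, which is one of the admissible outcomes in the definition of $\kappa_{NB}$; for larger $d_1,d_2$ the survival graph again properly contains $w$ and so is disconnected. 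Combining both cases, $U$ is a witnessing set of cardinality $n$, yielding $\kappa_{NB}(C)\le n$.

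The only mild obstacle is bookkeeping: one must be careful that the cyclic pairing avoids matching $e_i^+$ with $e_i^-$ (which would require $d_i\in\{3,4\}$ to have a useful common neighbor distinct from $w$) and that no $u_i$ accidentally equals a neighbor of $w$ or coincides with another $u_j$; the stated construction $u_i = e_i^+ + e_{i+1}^-$ handles all these points uniformly for every $n\ge 2$ and every $d_i\ge 3$.
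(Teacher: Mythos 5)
Your proposal is correct and is essentially the paper's own argument: the same cyclic pairing $u_i = e_i^+ + e_{i+1}^-$ (indices mod $n$) isolating the all-zeros vertex, the same count $\prod_i d_i - n(2n+1)$ to show the survival graph is nonempty beyond $w$, and the same observation that $C(3,3)$ degenerates to the complete graph $K_1$ (the paper simply treats that instance as a separate case with the identical set $U=\{12,21\}$). No substantive difference.
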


\begin{proof}
If $n = 2$ and $d_i = 3$ for $1 \leq i \leq 2$ (i.e., $C = C(3,3)$), let $U = \{12,21\}$. Then, $N[U] = \{10,20,01,11,21,02,12,22\}$, and $C \ominus U$ is a trivial graph that contains only the vertex $00$. Hence, $\kappa_{NB}(C) \leq |U| = 2$.

For other cases, let $v = 0000 \cdots 00$ and 
\[
U =
\begin{Bmatrix}
	1(d_2 - 1)00 \cdots 00,\\
	01(d_3 - 1)0 \cdots 00,\\
	001(d_4 - 1) \cdots 00,\\
	\cdots, \\
	0000 \cdots 1(d_n - 1),\\
	(d_1 - 1)000 \cdots 01
\end{Bmatrix}.
\]
Fig.~\ref{fig:upper} illustrates $v$, $N(v)$ and $U$. It is clear that $N(v) \subseteq N[U]$ and $v \notin N[U]$. Also, the number of vertices in the survival graph $C \ominus U$ is calculated as follows:  
\begin{equation*}
\begin{split}
|V(C \ominus U)| &\geq (\prod_{i=1}^{n} d_i) - n(2n + 1) \\
&\geq 
\begin{cases}
3^n - n(2n + 1) \geq 6 > 1 & \text{when}\ n \geq 3; \\
12 - n(2n + 1) = 2 > 1 & \text{when}\ n = 2\ \text{and there is a}\ d_j \geq 4\ \text{for}\ j\in\{1,2\}.
\end{cases}
\end{split}
\end{equation*}
Thus, $v$ is an isolated vertex in $C \ominus U$. Since $C \ominus U$ is not connected, we have $\kappa_{NB}(C) \leq |U| = n$.
\end{proof}

\begin{figure}[htbp]
	\centering
	\includegraphics[width=0.6\textwidth]{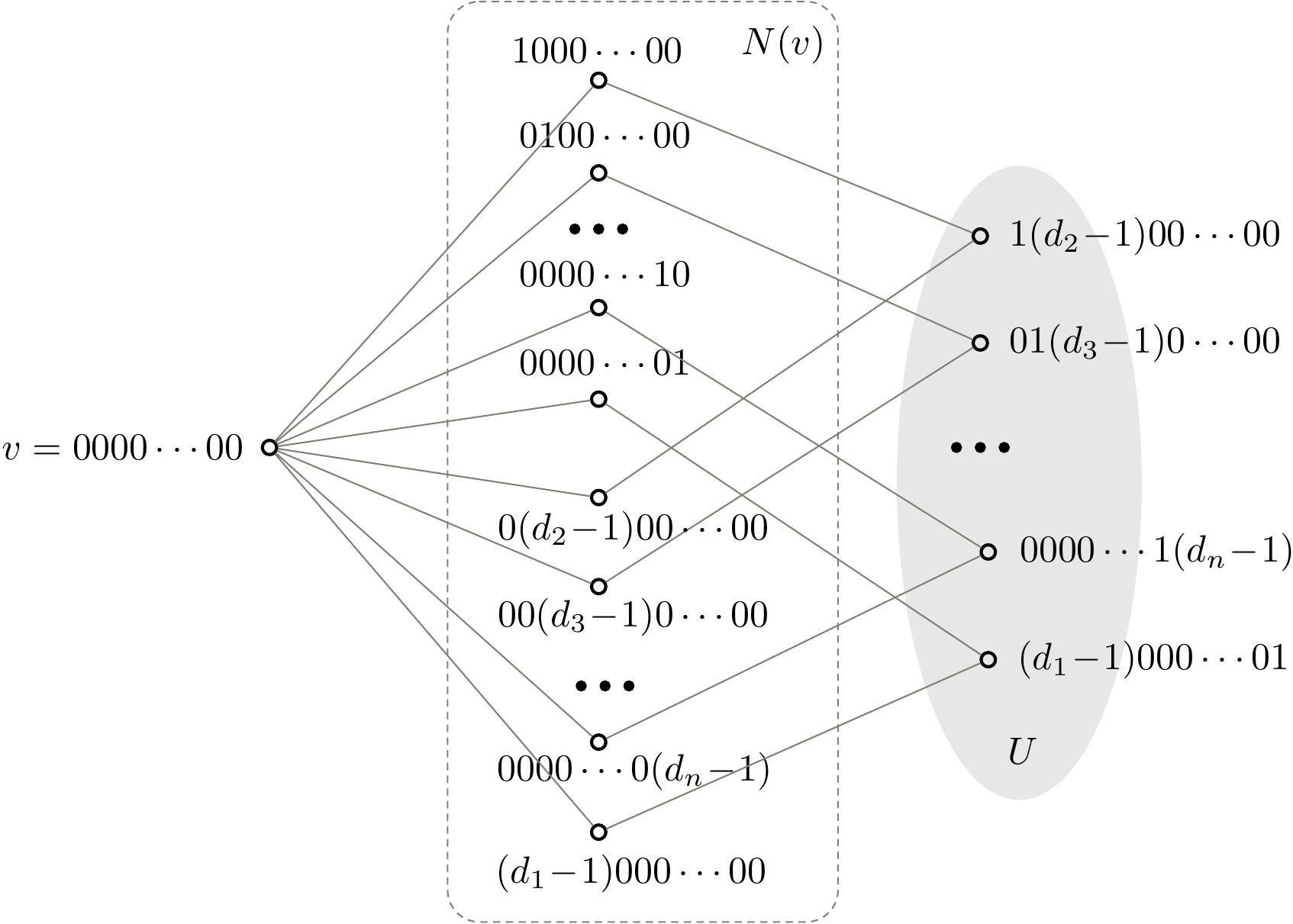}
	\caption{The illustration of $v$, $N(v)$ and $U$}
	\label{fig:upper}
\end{figure}

Combining Theorem \ref{thm:lower-bound} and Theorem \ref{thm:upper-bound}, we obtain the following main result.

\begin{theorem} \label{thm:main}
If $n \geq 2$ and $d_i \geq 3$ for $1 \leq i \leq n$, then $\kappa_{NB}(C(d_1, d_2, \ldots, d_n)) = n$.
\end{theorem}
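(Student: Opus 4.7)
The plan is to derive this theorem as the immediate combination of the two bounds already established in the preceding two subsections, namely the lower bound $\kappa_{NB}(C)\geq n$ from Theorem~\ref{thm:lower-bound} and the upper bound $\kappa_{NB}(C)\leq n$ from Theorem~\ref{thm:upper-bound}. Since both hypotheses (namely $n\geq 2$ and $d_i\geq 3$ for all $i$) match the hypotheses of the target theorem verbatim, the conclusion $\kappa_{NB}(C(d_1,d_2,\ldots,d_n))=n$ follows by the usual antisymmetry argument for integer-valued parameters: an integer that is simultaneously $\geq n$ and $\leq n$ must equal $n$. So at the level of the final theorem itself, there is essentially no work to do beyond invoking the two prior results.

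If I wanted to re-present the proof self-contained (rather than by citation), I would split it into the same two directions. For the upper bound I would exhibit, just as in Theorem~\ref{thm:upper-bound}, an explicit $n$-element set $U$ of faulty source vertices whose closed neighborhood $N[U]$ covers all neighbors of a distinguished vertex $v=00\cdots 0$ without covering $v$ itself, using the ``staircase'' construction $U=\{1(d_2-1)00\cdots 0,\,01(d_3-1)0\cdots 0,\,\ldots,\,(d_1-1)00\cdots 01\}$; a short counting argument confirms that $|V(C\ominus U)|>1$, so $v$ is isolated in $C\ominus U$, making $C\ominus U$ disconnected. For the lower bound I would invoke the connectivity result of Theorem~\ref{thm:lower-connectivity}, which yields $\kappa(C\ominus U)\geq 2n-2\ell\geq 2$ whenever $|U|=\ell\leq n-1$, and then rule out the complete and empty cases by simple order counts (using $|V(C\ominus U)|\geq 3^n-(n-1)(2n+1)\geq 4$) together with the fact that $C$ is $K_4$-free.

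The genuinely hard part of the entire argument is not the combination performed in this final theorem, but rather the inductive proof of Theorem~\ref{thm:lower-connectivity} that drives the lower bound; that is where the delicate case analysis on the dimension of partitioning, the position of faults relative to the two endpoints, and the application of Lemmas~\ref{lm:common-neighbor}--\ref{lma:IDP} to produce the required $2n-2\ell$ internally disjoint paths takes place. Here, in contrast, the proof is a single line: Theorem~\ref{thm:lower-bound} and Theorem~\ref{thm:upper-bound} together pin down $\kappa_{NB}(C)=n$.
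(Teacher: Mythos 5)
Your proposal matches the paper exactly: Theorem~\ref{thm:main} is obtained there by simply combining the lower bound of Theorem~\ref{thm:lower-bound} with the upper bound of Theorem~\ref{thm:upper-bound}, which is precisely your one-line antisymmetry argument. Your summary of where the real work lies (the inductive proof of Theorem~\ref{thm:lower-connectivity} and the explicit staircase construction for the upper bound) is also an accurate reflection of the paper's structure.
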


As we mentioned in Section~\ref{preliminaries}, the $k$-ary $n$-cube $Q_n^k$ is a special case of the undirected toroidal mesh $C(d_1, d_2, \ldots, d_n)$, where $d_i = k$ for all $1 \leq i \leq n$. Therefore, we can derive a corollary regarding the neighbor connectivity of $Q_n^k$. This aligns with the findings presented in \cite{dvovrak2020neighbor} and expands that research work.

\begin{corollary} \label{corollary:neighbor connectivity of k-ary n-cube}
If $n \geq 2$ and $k \geq 3$, then $\kappa_{NB}(Q_n^k) = n$.
\end{corollary}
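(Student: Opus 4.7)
The plan is to derive the corollary as an immediate specialization of Theorem \ref{thm:main}. By the discussion preceding the statement, when all coordinates of the dimension tuple coincide, that is, $d_1 = d_2 = \cdots = d_n = k$, the undirected toroidal mesh $C(d_1,d_2,\ldots,d_n)$ reduces to the $k$-ary $n$-cube $Q_n^k$. Thus, rather than building any new machinery, I would only need to check that the hypotheses of Theorem \ref{thm:main} apply in this specialized setting.

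To carry this out, I would first verify the two constraints $n \ge 2$ and $d_i \ge 3$ for $1 \le i \le n$ required by Theorem \ref{thm:main}: the former is given directly in the corollary's statement, while the latter follows because $d_i = k \ge 3$ for every $i$, by the standing assumption $k \ge 3$. Having confirmed both conditions, I would invoke Theorem \ref{thm:main} to conclude $\kappa_{NB}(C(\underbrace{k,k,\ldots,k}_{n})) = n$, and then apply the identification $Q_n^k = C(k,k,\ldots,k)$ to read off $\kappa_{NB}(Q_n^k) = n$.

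There is essentially no obstacle here, since all the substantive work, namely the lower bound $\kappa_{NB}(C) \ge n$ established in Theorem \ref{thm:lower-bound} (which rests on the connectivity estimate in Theorem \ref{thm:lower-connectivity} together with Lemmas \ref{lm:common-neighbor}--\ref{lma:IDP}), and the matching upper bound $\kappa_{NB}(C) \le n$ established in Theorem \ref{thm:upper-bound}, has already been carried out in the more general setting of arbitrary dimension tuples. The role of the corollary is therefore purely one of translation: it recasts Theorem \ref{thm:main} into the standard terminology for $k$-ary $n$-cubes and records that it recovers and extends the prior result of Dvo{\v{r}}{\'a}k and Gu~\cite{dvovrak2020neighbor} on $Q_n^k$.
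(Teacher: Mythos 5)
Your proposal is correct and matches the paper exactly: the corollary is obtained by specializing Theorem~\ref{thm:main} to $d_1=d_2=\cdots=d_n=k$ and using the identification $Q_n^k=C(k,k,\ldots,k)$, with the hypotheses $n\ge 2$ and $d_i=k\ge 3$ verified directly. No further argument is needed.
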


\section{Simulation and experimental results} \label{simulation}

In this section, we conduct a computer experiment to simulate random faults affecting vertices and their neighbors in undirected toroidal meshes, aiming to understand the situation in real network environments. Specifically, we select three undirected toroidal meshes of different scales: $C(3,4)$, $C(3,4,5)$, and $C(3,4,5,6)$.

Let the network participating in the trial be denoted as $G$ with the vertex set $V(G)$, and the set of all faulty source vertices in $G$ be represented as $U$. We will conduct one million independent trials for each undirected toroidal mesh $G$. Each trial consists of several rounds, depending on the status during the experimental process. Initially, all vertices in $G$ are healthy and set $U = \varnothing$. In the $i$-th round (where $i \geq 1$), a vertex $v$ is randomly selected from the set $V(G) \setminus U$, ensuring that each vertex has an equal probability of being chosen. Once selected, $v$ is designated as a faulty source vertex and added to the set $U$. Then, $v$ and all its adjacent vertices are removed from $G$. Finally, we assess whether the resulting survival graph $G \ominus U$ is disconnected, complete, or empty. If it is, we stop the trial, and the number of faulty source vertices is recorded as $\mu = |U|$. If not, the next round of the trial proceeds. The algorithm used in this trial is presented in the Appendix (see Algorithm~\ref{algo:Fault-test}).

After summarizing the experimental data, we present three bar charts, displayed in Figs.~\ref{fig:C34}, \ref{fig:C345}, and \ref{fig:C3456}. In these charts, the horizontal axis represents the number of faulty source vertices, while the vertical axis indicates how many times, out of one million trials, the graph \(G \ominus U\) becomes either disconnected, empty, or complete for the corresponding number of faulty source vertices shown on the horizontal axis.

In what follows, we examine each bar chart along with their respective experimental data. For convenience, we say that $G \ominus U$ reaches the \textit{target state} if it becomes a disconnected graph, an empty graph, or a complete graph. For $G=C(3,4)$, the full data from the simulation experiment is illustrated in Fig.~\ref{fig:C34}. This figure shows that when the number of faulty source vertices $|U| < 2$, there are no instances in one million trials where $G \ominus U$ achieves the target state. However, when $|U| = 2$, the resulting $G \ominus U$ has a 27\% chance of reaching the target state. According to Theorem~\ref{thm:main}, we know that $\kappa_{NB}(C(3,4)) = 2$, which indicates that our experiments are consistent with the theoretical value. Interestingly, in one million trials, the mean number of faulty source vertices needed to transform $G \ominus U$ into a target state is 2.95, with a median of 3 and a mode of 3. All these values exceed the theoretical value of $\kappa_{NB}(C(3,4))$.

\begin{figure}[htbp]
	\centering
	\includegraphics[width=10cm]{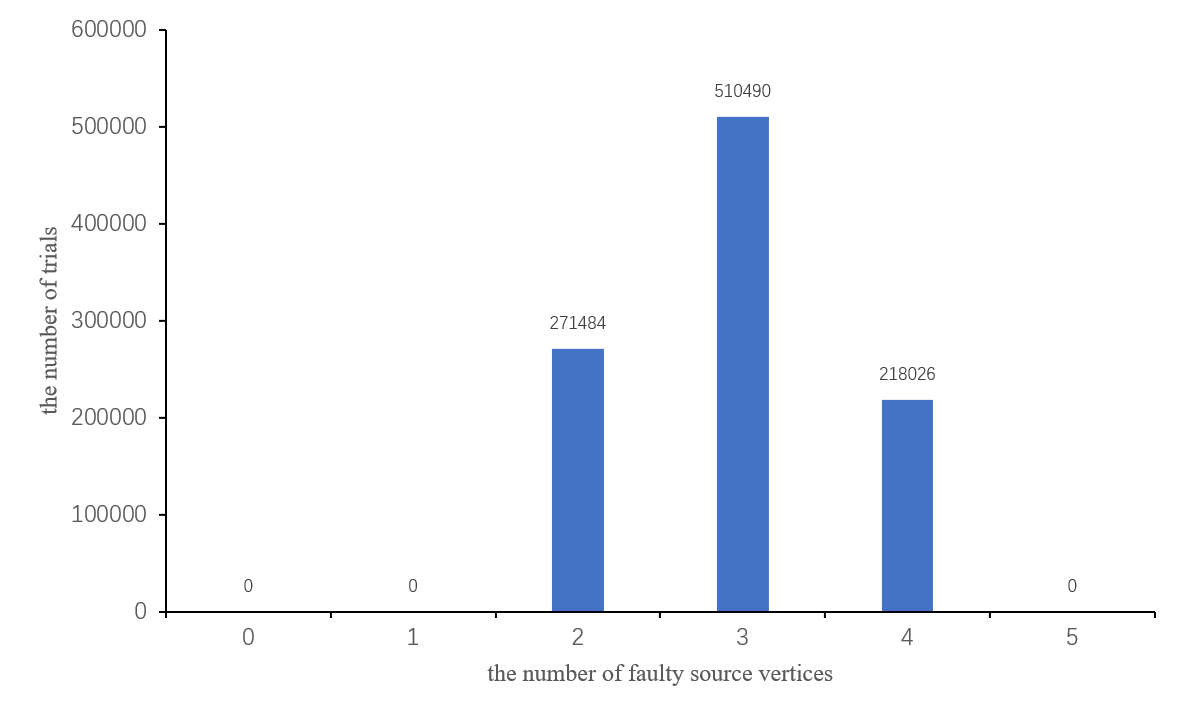}
	\caption{The distribution of the fault experiment on undirected toroidal mesh $C(3,4)$}
	\label{fig:C34}
\end{figure}

For $G=C(3,4,5)$, the complete data from the simulation experiment is presented in Fig.~\ref{fig:C345}. The results show that, in one million trials, the mean number of faulty source vertices required to bring $G \ominus U$ to the target state is 8.18, with a median of 8 and a mode of 7. The relationship between these statistics shows that the mode ($7$) $<$ median ($8$) $<$ mean ($8.18$), confirming that the distribution of faults is positively skewed. Additionally, by applying Theorem~\ref{thm:main}, we find that $\kappa_{NB}(C(3,4,5)) = 3$. As shown in Fig.~\ref{fig:C345}, we cannot find instances in one million trials when $|U| < 3$. However, when $|U| = 3$, there is a $1.7 \%$ chance that $G \ominus U$ will attain the target state. These findings are aligned with the theoretical predictions.

\begin{figure}[htbp]
	\centering
	\includegraphics[width=10cm]{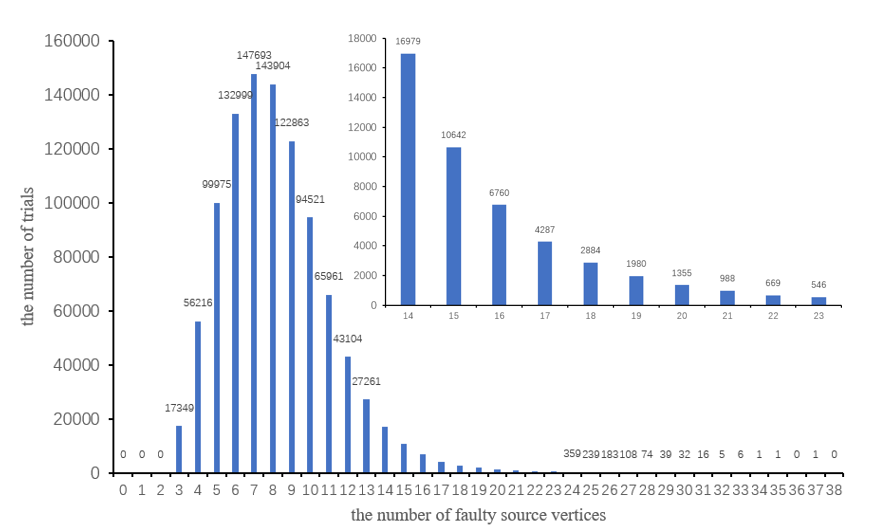}
	\caption{The distribution of the fault experiment on $C(3,4,5)$}
	\label{fig:C345}
\end{figure}

Finally, for the 4-dimensional undirected toroidal mesh $G=C(3,4,5,6)$, the simulation result is depicted in Fig.~\ref{fig:C3456}. The presentation of this experiment is similar to the previous one, maintaining entirely consistency with the theoretical values. After conducting one million of trials, the results yielded a mean of 26.18, a median of 26, and a mode of 26.

\begin{figure}[htbp]
	\centering
	\includegraphics[width=10cm]{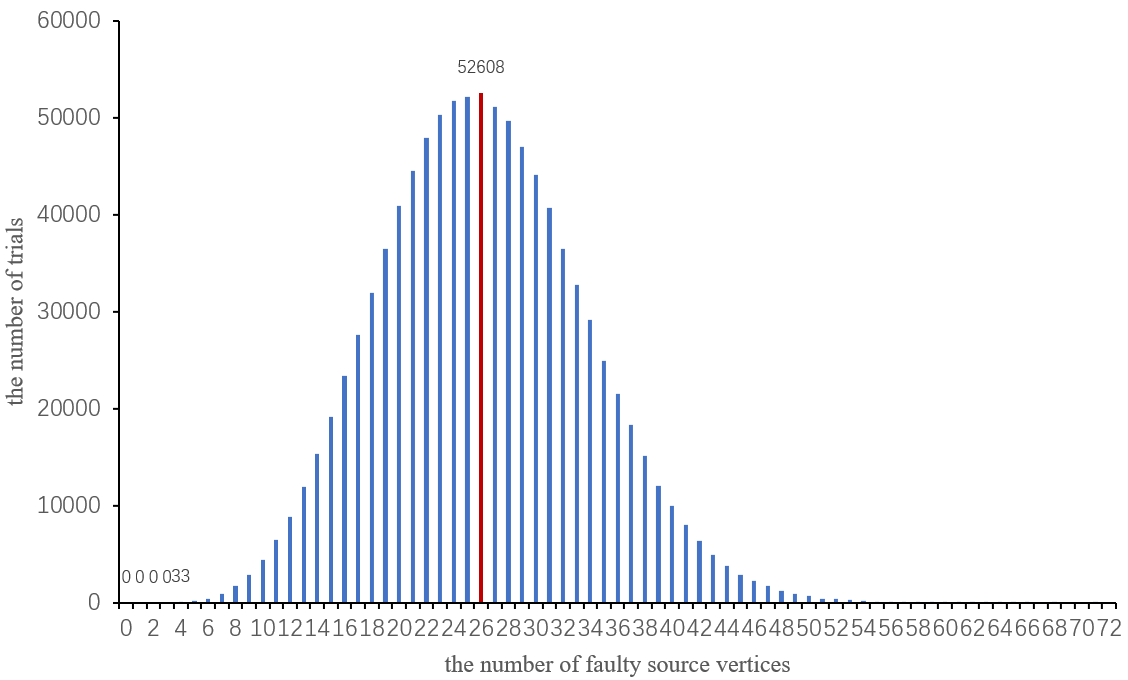}
	\caption{The distribution of the fault experiment on $C(3,4,5,6)$}
	\label{fig:C3456}
\end{figure}

\section{Concluding remarks} \label{concluding remarks}
In this paper, we determine the neighbor connectivity of the $n$-dimensional undirected toroidal mesh $C(d_1, d_2, \ldots, d_n)$ for $n \geq 2$ and $d_i \geq 3$, where $1 \leq i \leq n$. The connectivity is given by 
\[ 
\kappa_{NB}(C(d_1, d_2, \ldots, d_n)) = n. 
\]
This result generalizes the neighbor connectivity of the $k$-ary $n$-cube $Q_n^k$. Additionally, we provide an algorithm to randomly generate faulty source vertices in a simulated network, leading to faults affecting neighboring vertices. Our experiments ultimately confirmed that the experimental data were consistent with our theoretical findings.



\bibliographystyle{elsarticle-harv}
\parskip 0pt
\bibliography{main}

\end{document}